\numberwithin{equation}{section}
\newcommand{\Lb}{\underline{L}}
\newcommand{\beq}{\begin{equation}}\newcommand{\eq}{\end{equation}}
\newcommand{\beqs}{\begin{equation*}}\newcommand{\eqs}{\end{equation*}}
\def\pa{\partial}\def\pab{\bar\pa}
\def\Lie{{\mathcal L}}
\def\f12{\frac 1 2}
\def\a{\alpha}
\def\b{\beta}
\def\om{\omega}
\def\Om{\Omega}
\def\ab{\underline{\a}}
\def\Lb{{\underline{L}}}
\def\nab{\nabla}
\def\pa{\partial}
\def\div{\text{div}}
\def\c{\cdot}
\def\pab{{\slashed{\partial}}}
\def\qq{{\langle q\rangle}}
\def\q{\mathbf{q}}
\newtheorem{thm}{Theorem}
\newtheorem{prop}{Proposition}
\newtheorem{lem}{Lemma}
\newtheorem{remark}{Remark}
\begin{document}

\title{Asymptotics and Scattering for massive Maxwell-Klein-Gordon equations}
\author{Xuantao Chen}
\address[X.C.]{Johns Hopkins University, Department of Mathematics, 3400 N.\@ Charles St., Baltimore, MD 21218, USA}
\email{xchen165@jhu.edu}

\date{}

\maketitle

\vspace{-5ex}
\begin{abstract}
    We study the asymptotic behavior and the scattering from infinity problem for the massive Maxwell-Klein-Gordon system in the Lorenz gauge, which were previously only studied for the massless system. For a general class of initial data, in particular of nonzero charge, we derive the precise asymptotic behaviors of the solution, where we get a logarithmic phase correction for the complex Klein-Gordon field, a combination of interior homogeneous function, radiation fields to null infinity and an exterior charge part for the gauge potentials. Moreover, we also derive a formula for charge at infinite time, which shows that the charge is concentrated at timelike infinity, a phenomenon drastically different from the massless case. 
    After deriving the notion of the asymptotic profile, we prove the scattering from infinity by constructing backward solutions given the scattering data. We show that one can determine the correct charge contribution using the information at timelike infinity, which is important for us to obtain solutions not only for the reduced equations in the Lorenz gauge but also for the original physical system.
\end{abstract}

\section{Introduction}
In this paper, we study both the forward (asymptotic behavior) and backward (scattering from infinity) problem of the massive Maxwell-Klein-Gordon (mMKG) equations on $\mathbb{R}^{3+1}$:\footnote{Throughout this paper, we raise and lower indices using the standard Minkowski metric $m_{\mu\nu}=\mathrm{diag}\{-1,1,1,1\}$, and we use the Einstein summation convention. Also, when the repeated index is spatial, we define the expression to be the sum regardless of whether it is upper or lower, as the spatial part of the Minkowski metric is Euclidean.}
\begin{equation}
 \label{mMKG}
\begin{split}
&\pa^\nu F_{\mu\nu}=J[\phi]_\mu=\Im(\phi \cdot\overline{D_\mu\phi}),\\
&D^\mu D_\mu\phi-m^2\phi=0.
\end{split}
\end{equation}
This is a coupled system of an electromagnetic field and a complex scalar field. We study the case $m\neq 0$, and for simplicity we can normalize the mass so that $m^2=1$.
The covariant derivative is defined as
\begin{equation*}
D_\mu =\pa_\mu+iA_\mu,
\end{equation*}
where $A_\mu$ is the connection $1$-form, and the curvature $2$-form $F_{\mu\nu}$ is given by
\begin{equation*}
F_{\mu\nu}=(dA)_{\mu\nu}=\pa_\mu A_\nu-\pa_\nu A_\mu.
\end{equation*}
The commutator of the covariant derivative on the complex scalar field is
\begin{equation}
    (D_\mu D_\nu -D_\nu D_\mu)\phi=i F_{\mu\nu}\phi.
\end{equation}
The current $1$-form $J[\phi]_\mu=\Im(\phi\, \overline{D_\mu\phi})$, where $\Im$ denotes the imaginary part of complex numbers. One can easily derive the conservation law from the equations:
\begin{equation}
    \pa^\mu J_\mu=0.
\end{equation}

The system is gauge invariant, in the sense that $(A-d\psi, e^{i\psi}\phi)$
solves the same equation \eqref{mMKG} for any potential function $\psi$. Define $\lambda=\pa^\mu A_\mu$. Then the system can be rewritten as
\begin{equation}
\begin{split}
&-\Box A_\mu=\Im(\phi \cdot\overline{D_\mu\phi})-\pa_\mu \lambda,\\
&-\Box\phi+\phi=i\lambda\phi+2iA^\mu \pa_\mu\phi-A^\mu A_\mu\phi,
\end{split}
\end{equation}
where $\Box=\pa^\mu \pa_\mu=-\pa_t^2+\triangle_x$ is the D'Alembertian operator. 

The Lorenz gauge condition says that $\lambda=\pa^\mu A_\mu=0$. Under this gauge condition, the system becomes the reduced mMKG system:
\begin{equation}\label{reducedMKG}
\begin{split}
&-\Box A_\mu=\Im(\phi \cdot\overline{D_\mu\phi}),\\
&-\Box\phi+\phi=2iA^\mu \pa_\mu\phi-A^\mu A_\mu\phi.
\end{split}
\end{equation}
One can verify that for $(A_\mu,\phi)$ satisfying \eqref{reducedMKG}, the quantity $\lambda$ satisfies its own equation
\begin{equation}\label{eqoflambda}
    \Box \lambda=|\phi|^2\lambda.
\end{equation}
Therefore, if the Lorenz gauge condition holds at an initial time slice, it holds everywhere in the spacetime. 

\vspace{1.6ex}

The global wellposedness of the mMKG equations was first established by the pioneering works \cite{eardley1982global1,eardley1982global2} of Eardly-Moncrief. Later Klainerman-Machedon \cite{KlainermanMachedon} used bilinear estimates to generalize the result to data only with bounded energy in the massless case ($m=0$). Regarding long-time behaviors, Lindblad-Sterbenz \cite{LindbladSterbenz} proved the global existence of massless MKG equations with global decay estimates, for small initial data with nonzero charge, which is previously outlined by Shu \cite{shu1991asymptotic} (see also a simplified proof by Bieri-Miao-Shahshahani \cite{bieri2014asymptotic}). The massless system was also studied for large initial data \cite{yang2016decay,yang2018global,YangYu}. For the massive case which is what we discuss in this work, the global existence with decay estimates is first studied by Psarelli \cite{psarelli1999asymptotic} with restriction on compactly supported initial data. The restriction is then removed by Klainerman-Wang-Yang \cite{KlainermanWangYang}, in which they constructed global solutions and derived decay estimates of the mMKG system in the exterior of the light cone. Then the result without assuming compactly supported data is established in Fang-Wang-Yang \cite{FangWangYang}.

\vspace{1ex}

In this paper, we study the precise asymptotic behavior of solutions of the mMKG system. We are interested in two types of problems, which are closely related: 

\begin{enumerate}
    \item \textit{Asymptotics} (forward problem): We want a precise description of the asymptotic behaviors of the solution. We show that given the initial data, one can use a set of functions to describe the asymptotic profile of the solution at (causal) infinity; we may call this set of functions the ``scattering data";
    \item \textit{Scattering from infinity} (backward problem): Now given the scattering data at infinity (satisfying some compatibility conditions), we study if there always exists a global solution of the mMKG system such that its behavior at infinity is exactly the one given by this scattering data.
\end{enumerate}

\vspace{1ex}

In \cite{CKL}, Candy-Kauffman-Lindblad studied the forward problem for the massless MKG system in the Lorenz gauge. Later He \cite{LiliHe21} established a refinement of the result, and proved a scattering from infinity result for the same system. The method of constructing backward solutions originates in the work of Lindblad-Schlue \cite{LindbladSchlue1} on the scattering from infinity for wave equations modeling the Einstein equation in wave coordinates. Recently the result in \cite{LiliHe21} was extended to large scattering data in \cite{DaiMeiWeiYang} in a gauge invariant setting.

The massive Maxwell-Klein-Gordon system, on the other hand, presents significantly different behavior compared with the massless system. Due to the presence of nonzero mass, the complex scalar field presents the behavior of the Klein-Gordon fields rather than massless waves. Therefore one needs to deal with the coupling of the electromagnetic field, which behaves similarly to wave fields, and the complex scalar field which satisfies a Klein-Gordon equation. This coupling of fields with different asymptotics
results in that many techniques used in the massless case break down. 

Unlike the wave equation, the Klein-Gordon equation does not have scale invariance, which is one of the important ingredients of the vector-field method introduced by Klainerman \cite{Klainerman1985vectorfield}. Instead, Klainerman \cite{Klainerman1985KG} found a variation of the vector-field method, which only requires the use of Lorenz boost vector fields. This is called the hyperboloidal foliation method, which is later used by LeFloch-Ma \cite{LeFLochMabook} and Wang \cite{WangEKG} in the study of the Einstein-Klein-Gordon equations. While the hyperboloidal foliation is by its nature only applicable to compactly supported data, one can combine it with estimates in the exterior as in the work \cite{KlainermanWangYang} we mentioned above (see also \cite{LeflochMafull} for the Einstein-Klein-Gordon system).

Nevertheless, the hyperboloidal version of the vector-field method itself only provides energy bound and decay estimates, hence does not directly give a precise expansion of the solution, which is the problem we are interested in. In \cite{CL}, we considered this problem for the wave-Klein-Gordon model introduced by LeFloch-Ma \cite{LeFlochMamodel} and Wang \cite{Wangmodel}, and derived both the asymptotics and scattering from infinity results. In this paper, we show that we can also give concrete answers to the forward and backward problems to the massive Maxwell-Klein-Gordon equations, in particular with nonzero charges.

\subsection{Main results}
We consider the initial value problem, which can be formulated in the Lorenz gauge by the set $(a_i,\dot a_i,\phi_0,\dot\phi_0)$. Using the equation and the gauge condition, one can then determine other components, hence the value of $A_\mu$, $\pa_t A_\mu$, $\phi$, $\pa_t\phi$ at the initial slice. In this work, we consider the initial time slice to be $\{t=t_0=2\}$. The charge is then given by
\begin{equation}
    \q_0=\frac{1}{4\pi}\int_{t=t_0} \Im(\phi\, \overline{D_0\phi})\, dx.
\end{equation}
We let the initial data satisfy similar bounds as in \cite{KlainermanWangYang,FangWangYang} (where the assumptions are written in a gauge invariant way, see \eqref{ExteriorCondition}). 

To state the results, we define the hyperboloidal coordinates $\tau=\sqrt{t^2-r^2}$, where $r=|x|$, and $y=x/t$. We will use $y$ as the variable on the hyperboloids $\{t^2-r^2=\tau^2\}$. We also define $q=r-t$, and the angular variable $\om\in\mathbb{S}^2$. The non-increasing smooth cutoff function $\chi(s)$ is defined so that $\chi(s)=1$ when $s\leq 1/2$ and $\chi(s)=0$ when $s\geq 3/4$.

\begin{thm}[Asymptotics]\label{mainthm1}
    Consider the initial value problem of \eqref{mMKG} with suitable initial data (less than $\varepsilon$ in some weighted norms), which admits global solutions by the existence result \cite{KlainermanWangYang,FangWangYang}. Then, there exist functions $a_+(y)$, $a_-(y)$, $U_\mu(y)$, defined for $|y|\leq 1$, and radiation fields $F^{I}_\mu(q,\om)$, $F^R_\mu(q,\om)$, so that in the Lorenz gauge, the solution $(A_\mu,\phi)$ satisfies
    \begin{equation*}
        \begin{split}
            A_\mu&=\frac{U_\mu(y)}\tau+\chi(\frac\qq r)\frac{F^R_\mu(q,\om)}r+A_\mu^{Linear}+O(\varepsilon\langle t+r\rangle^{-2+\delta}),\quad t-r\geq 4,\\
            A_\mu&=\frac{F^{I}_\mu(q,\om)}{r}+\q_0 r^{-1} \delta_{0\mu}+A_\mu^{Linear}+O(\varepsilon\langle t+r\rangle^{-2+\delta}),\quad t-r\leq 4,\\
            \phi&=\tau^{-\frac 32}(e^{i\tau+iU(y)\ln\tau+ih}a_+(y)+e^{-i\tau-iU(y)\ln\tau-ih}a_-(y))+O(\varepsilon\langle t+r\rangle^{-\frac 52+\delta}),
        \end{split}
    \end{equation*}
    where $U_\mu(y)$ can be determined by $a_\pm(y)$, $U(y)=U^\tau(y):=-\frac{x_\mu}\tau U_\mu(y)$, $h$ is a uniformly bounded function, and $A^{Linear}_\mu$ is the solution of a linear wave equation (with the given initial data written in the Lorenz gauge). We have the properties\footnote{We use the notation $a\lesssim b$ to denote that there exists a constant $C>0$ such that $a\leq Cb$.}
    \begin{equation*}
        |(F_\mu,F_\mu^R)(q,\om)|\lesssim \varepsilon\langle q\rangle^{-1+\delta}, \quad |a_\pm(y)|\lesssim \varepsilon (1-|y|^2)^{\frac 54-\delta}, \quad (1-|y|^2)^{-\frac 12}U_\mu(y)\rightarrow \mathcal U_\mu(\om)\text{ as $|y|\rightarrow 1$},
    \end{equation*}
    where $F_\mu(\om)$ are functions of $\om$. Moreover, we have the following relation on the radiation fields
    \begin{equation}
        rA_L\rightarrow \q_0\quad \text{as $r\rightarrow\infty$ with $q=r-t$ fixed,}
    \end{equation}
    which implies $\mathcal U_L(\om)=\q_0$ as a corollary.
    Lastly, we can express the charge using only the information at timelike infinity:
    \begin{equation}\label{q0intro}
        \q_0=\frac{1}{4\pi}\int_{|y|\leq 1} |a_-(y)|^2-|a_+(y)|^2 \, dH_1
    \end{equation}
    where $dH_1$ is the standard induced measure on the unit hyperboloid.
\end{thm}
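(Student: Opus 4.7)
The plan is to upgrade the global existence with decay estimates of \cite{KlainermanWangYang,FangWangYang} to the precise asymptotic expansions in the statement, and then use conservation of the current $J_\mu$ to derive the charge formula. The analysis combines a hyperboloidal foliation $\{\tau=\mathrm{const}\}$ inside the future light cone with a $(t,r,\omega)$ null-type foliation in the exterior, matched by a smooth cutoff as in the statement. I address $\phi$, then $A_\mu$, and finally the charge formula.

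For $\phi$, in hyperboloidal coordinates $(\tau,y)$ with $y=x/t$, the Klein--Gordon equation takes the schematic form
\begin{equation*}
\partial_\tau^2\phi + \phi = (\text{terms lower order in }\tau) + 2iA^\mu\partial_\mu\phi - A^\mu A_\mu\,\phi,
\end{equation*}
so extracting the two modes $e^{\pm i\tau}$ reduces the problem to transport equations for slowly-varying envelopes. Since $A_\mu\sim U_\mu(y)/\tau$ in the interior, the term $2iA^\mu\partial_\mu\phi$ is long-range along the rays of constant $y$; its integration in $\tau$ produces the logarithmic phase correction $\pm i U(y)\ln\tau$ with $U(y)=-x_\mu U_\mu(y)/\tau$, while $h$ collects time-integrable remainders. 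The profiles $a_\pm(y)$ are then defined as $\tau\to\infty$ limits, and the weight $(1-|y|^2)^{5/4-\delta}$ reflects the hyperboloidal weighted energy bound available from the existence theorem.

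For $A_\mu$, a direct computation from the interior asymptotic of $\phi$ yields the current profile
\begin{equation*}
J_\mu[\phi] = (x_\mu/\tau)\,\tau^{-3}(|a_+|^2 - |a_-|^2) + (\text{oscillatory in }e^{\pm 2i\tau}) + \text{lower order};
\end{equation*}
Duhamel integration of $-\Box A_\mu = J_\mu$ along the hyperboloidal foliation then produces a homogeneous degree $-1$ profile $U_\mu(y)/\tau$ inside the light cone, with coefficients determined by $a_\pm$. In the exterior, the massive field $\phi$ and hence $J_\mu$ decay rapidly by \cite{KlainermanWangYang}, so a standard radiation-field analysis along outgoing null cones produces $F^I_\mu(q,\omega)/r + \q_0 r^{-1}\delta_{0\mu} + A_\mu^{Linear}$, the Coulomb tail being unavoidable whenever $\q_0\neq 0$. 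Matching the interior $U_\mu(y)/\tau$ with this exterior expansion in the transition region $t-r\sim 4$, together with the Lorenz constraint $\partial^\mu A_\mu = 0$, yields the compatibility $rA_L\to \q_0$ as $r\to\infty$ with $q$ fixed, hence $\mathcal U_L(\omega) = \q_0$ as a corollary.

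Finally for \eqref{q0intro}: the divergence theorem applied to $\partial^\mu J_\mu=0$ on the region $\{t\geq t_0,\ \tau\leq\tau_0\}$ (truncated at $r\leq R$ and then $R\to\infty$ using exterior decay to discard the side flux) gives
\begin{equation*}
4\pi\q_0 = \int_{\tau=\tau_0} J^\mu\,(x_\mu/\tau)\,dH_{\tau_0}.
\end{equation*}
Substituting the asymptotic for $\phi$ and using $\partial_\mu\tau = -x_\mu/\tau$, the leading behavior $D_\mu\phi \sim -i(x_\mu/\tau)\tau^{-3/2}(e^{i\theta}a_+ - e^{-i\theta}a_-)$ makes $\Im(\phi\,\overline{D_\mu\phi})$ split into a stationary part $(x_\mu/\tau)\tau^{-3}(|a_+|^2-|a_-|^2)$ and oscillatory $e^{\pm 2i\theta}$ contributions; the latter integrate to vanishing remainders as $\tau_0\to\infty$ by integration by parts against the rapid phase. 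Contracting with the unit normal gives $J^\mu(x_\mu/\tau) \sim -\tau^{-3}(|a_+|^2-|a_-|^2)$, which combined with $dH_{\tau_0}=\tau_0^3\,dH_1$ yields \eqref{q0intro} in the limit. The most delicate step is expected to be the phase renormalization with uniform control as $|y|\to 1$, together with the matching of interior and exterior descriptions of $A_\mu$ that forces $\mathcal U_L = \q_0$; once these are in place, the charge formula itself follows cleanly from current conservation and stationary-phase cancellation of the oscillatory terms.
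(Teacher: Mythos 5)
Your plan follows the paper's general architecture (hyperboloidal foliation for $\phi$, extraction of the $e^{\pm i\tau}$ modes, Duhamel for $A_\mu$ producing the self-similar interior profile, divergence theorem for the charge formula), but there is a concrete error in the treatment of the current $J_\mu[\phi]$ that breaks the charge-formula argument and misses the key structural observation of the paper. You claim that $\Im(\phi\,\overline{D_\mu\phi})$ splits into a stationary part plus ``oscillatory $e^{\pm 2i\theta}$ contributions'' which are disposed of ``by integration by parts against the rapid phase.'' In fact, plugging the leading profiles $\phi\sim\tau^{-3/2}(e^{i\theta}a_++e^{-i\theta}a_-)$ and $D_\mu\phi\sim -i(x_\mu/\tau)\tau^{-3/2}(e^{i\theta}a_+-e^{-i\theta}a_-)$ into $\phi\,\overline{D_\mu\phi}$ yields
\begin{equation*}
\phi\,\overline{D_\mu\phi}\sim i\,\frac{x_\mu}{\tau}\,\tau^{-3}\Bigl(|a_+|^2-|a_-|^2 - e^{2i\theta}a_+\overline{a_-}+e^{-2i\theta}a_-\overline{a_+}\Bigr),
\end{equation*}
and the combination $-e^{2i\theta}a_+\overline{a_-}+e^{-2i\theta}a_-\overline{a_+}=-2i\,\Im\bigl(e^{2i\theta}a_+\overline{a_-}\bigr)$ is \emph{purely imaginary}, so it is annihilated upon taking $\Im$ after multiplication by $i$. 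The oscillatory part of $J_\mu$ therefore vanishes \emph{identically at leading order} -- there is nothing to integrate by parts away. This cancellation is what the paper highlights as the ``special structure'' of the mMKG nonlinearity distinguishing it from the generic wave--Klein--Gordon models where the oscillatory source terms genuinely persist.

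Your integration-by-parts substitute does not work where you invoke it: the charge identity equates $4\pi\q_0$ with the flux $\int_{\hat H_{\tau_0}}\Im(\phi\,\overline{D_\tau\phi})\,dH_{\tau_0}$ (plus an incoming-cone flux which vanishes as $\tau_0\to\infty$), and this is an integral over a \emph{single} hyperboloid, not a $\tau$-integral. A residual $\tau_0^{-3}e^{\pm 2i\tau_0}$ term in the integrand would, after multiplying by $dH_{\tau_0}=\tau_0^3\,dH_1$, produce a non-decaying $e^{\pm 2i\tau_0}\int_{H_1}a_\pm\overline{a_\mp}\,dH_1$ contribution, and there is no integration variable against which to run stationary phase; the limit would simply fail to exist. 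The fact that the oscillatory coefficient is exactly zero is indispensable, not a technical remainder estimate. Secondarily, your derivation of $rA_L\to\q_0$ by ``matching the interior $U_\mu(y)/\tau$ with the exterior expansion'' plus the Lorenz constraint is only a heuristic; the paper's actual argument integrates the null decomposition of $\partial^\mu A_\mu=0$, first along $L$ to control $\Lb(rA_L)$ and then along $\Lb$ back to the initial slice where the Coulomb tail pins $rA_L$ to $\q_0$, and this would need to be supplied.
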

\begin{remark}
    We extract the part $A^{Linear}_\mu$ just to improve the decay of the remainder. We can also include them in the remainder by replacing the power $-2+\delta$ by $-1-\frac{\gamma_0'-1}2$ where $\gamma_0'>1$ comes from the initial data.
\end{remark}
\begin{remark}
    The result says that $A_\mu\sim U_\mu(y)/\tau$ towards timelike infinity, and $\phi$ behaves like a linear expansion with a logarithmic phase correction. Towards null infinity, all terms (apart from the remainder) in the expansion contribute to radiation fields, so neither $F^R_\mu$ nor $F^I_\mu$ is the total radiation field of $A_\mu$. In fact, $F_\mu^R$ is the radiation field of lower order terms in the interior, and $F^I_\mu$ is $F_\mu^R$ combined with the radiation fields that originate from the interior leading behavior $U_\mu(y)/\tau$.
\end{remark}

\vspace{1ex}
We now state our next result, in which we study the backward problem. We define the scattering data by the set $(a_+(y),a_-(y),F_\mu(q,\om))$. We impose the decay conditions
\begin{equation}
    |(1-|y|^2)^{|I|}\nab_y^I a_\pm(y)|\lesssim \varepsilon(1-|y|^2)^\a,\quad |(\langle q\rangle \pa_q)^k \pa_\om^\b F_\mu(q,\om)|\lesssim \varepsilon\langle q\rangle^{-1+\gamma},\quad |I|\leq N,\, k+|\b|\leq N
\end{equation}
for some $N\geq 5$, $\a\geq \frac 74$, $\gamma<1/2$. Motivated by \eqref{q0intro}, we define the charge at infinity $\q_\infty$ by
\begin{equation}\label{defqinftyintro}
        \q_\infty :=\frac{1}{4\pi}\int_{|y|\leq 1} |a_-(y)|^2-|a_+(y)|^2 \, dH_1
    \end{equation}
In the forward problem, we have that towards null infinity $rA_L\rightarrow \q_0$. This is in fact from the Lorenz gauge condition, so here we need to have a similar property without assuming the condition. We will prove the following important observation:
\begin{prop}\label{propintro}
    Given $a_\pm(y)$, we determine functions $U_\mu(y)$ by \footnote{Of course, there is a smoothness problem for the solution when $t=r$, but it does not matter in view of the strong Huygens' principle.}
    \begin{equation}
        -\Box(\frac{U_\mu(y)}\tau)=\frac{x_\mu}\tau \tau^{-3}(|a_+(y)|^2-|a_-(y)|^2),
    \end{equation}
    which is equivalent to an elliptic equation on the unit hyperboloid. Then we have
    \begin{equation}
        r\frac{U_\mu(y)}\tau \rightarrow \q_\infty,\quad\text{as $r\rightarrow\infty$, with $q=r-t<0$ fixed,}
    \end{equation}
    where $\q_\infty$ is defined by $a_\pm(y)$ by \eqref{defqinftyintro}.
\end{prop}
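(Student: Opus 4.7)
I would reformulate the PDE as an elliptic equation on the unit hyperboloid $H^3=\{\tau=1\}$. Using the homogeneity identity $\Box(f(y)/\tau)=\tau^{-3}(\Delta_{H^3}f+f)$ (verified for instance by direct computation on $f=1$ and checking degrees, where $\Delta_{H^3}$ is the Laplacian induced from the Minkowski metric on $H^3$), the defining equation collapses to
\begin{equation*}
(-\Delta_{H^3}-1)\,U_\mu \;=\; x_\mu\bigl(|a_+(y)|^2-|a_-(y)|^2\bigr)\quad\text{on }H^3,
\end{equation*}
where $x_\mu$ denotes the Minkowski coordinate functions restricted to $\{\tau=1\}$ (so $x_0=-1/\sqrt{1-|y|^2}$ and $x_i=y^i/\sqrt{1-|y|^2}$). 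Because $\sqrt{1-|y|^2}=\tau/t$, the asserted convergence $r\,U_\mu(y)/\tau\to\q_\infty$ (in its $L$-contracted form, matching the main theorem's $rA_L\to\q_0$) is equivalent to computing the radiation field of the spacetime function $\tilde U_\mu(t,x):=U_\mu(y)/\tau$ at $\mathscr I^+$ and contracting with $L^\mu=(1,\om)$.

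\textbf{Core computation via Kirchhoff.} I would view $\tilde U_\mu$ as a solution of $-\Box\tilde U_\mu=S_\mu$ on the interior of the forward cone, with $S_\mu:=x_\mu\tau^{-4}(|a_+|^2-|a_-|^2)$ extended by zero. The decay $\alpha\geq 7/4$ of $a_\pm$ makes $S_\mu$ integrable despite the $\tau^{-4}$ factor, so I can use the retarded Kirchhoff integral to represent $\tilde U_\mu$. Expanding $|x-x'|=r-\om\cdot x'+O(1/r)$ at $\mathscr I^+$ gives the standard radiation-field formula
\begin{equation*}
\lim_{r\to\infty,\,u\text{ fixed}} r\,\tilde U_\mu(u+r,r\om) \;=\; \frac{1}{4\pi}\int_{\mathbb{R}^3} S_\mu(u+\om\cdot x',x')\,dx'.
\end{equation*}
I then change variables from $x'$ to the hyperbolic coordinate $y'=x'/t'$ with $t'=u/(1-\om\cdot y')$ enforced by the lightcone constraint inside the integrand; the Jacobian is $dx'=t'^3(1-\om\cdot y')^{-1}\,dy'$. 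Writing $x'_\mu=t'\hat x_\mu(y')$ with $\hat x_0=-1$, $\hat x_i=y'^i$, and using $\tau'^4=t'^4(1-|y'|^2)^2$ together with $dH_1=dy'/(1-|y'|^2)^2$, the factors of $t'$ and $(1-|y'|^2)^2$ cancel cleanly and the integrand telescopes to
\begin{equation*}
\lim_{r\to\infty} r\,\tilde U_\mu \;=\; \frac{1}{4\pi}\int_{H^3}\frac{\hat x_\mu(y')}{1-\om\cdot y'}\,\bigl(|a_+|^2-|a_-|^2\bigr)\,dH_1(y'),
\end{equation*}
manifestly independent of $u$, as required by the degree $-1$ homogeneity of $\tilde U_\mu$.

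\textbf{Contraction with $L$ and main obstacle.} Contracting with $L^\mu=(1,\om)$ produces the arithmetic miracle $L^\mu\hat x_\mu(y')=-1+\om\cdot y'=-(1-\om\cdot y')$, which exactly cancels the Poisson-type denominator. Therefore
\begin{equation*}
\lim_{r\to\infty} r\,L^\mu\tilde U_\mu \;=\; -\frac{1}{4\pi}\int_{H^3}\bigl(|a_+|^2-|a_-|^2\bigr)\,dH_1 \;=\; \q_\infty,
\end{equation*}
which is the asserted identity. The main subtlety I expect is justifying that the Kirchhoff integral indeed produces the $U_\mu$ singled out by the elliptic problem on $H^3$: since $-\Delta_{H^3}-1$ sits at the bottom of the $L^2$ spectrum, with the indicial equation at the ideal boundary $\partial H^3$ having a double root at $s=1$, generic solutions exhibit both $\rho$ and $\rho\log\rho$ boundary asymptotics, and one must confirm that the Kirchhoff construction selects the log-free (Dirichlet-regular) branch compatible with $U_\mu\sim\mathcal U_\mu(\om)\sqrt{1-|y|^2}$. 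A clean way to bypass this is to define $U_\mu$ via the Kirchhoff integral from the outset — its $(t,x)$-homogeneity of degree $-1$ is automatic from the degree $-3$ homogeneity of $S_\mu$ — and then verify the elliptic equation and the correct boundary asymptotics a posteriori, reducing the proposition to the explicit computation above.
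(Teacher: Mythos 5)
Your approach is correct and is a genuinely cleaner variant of the paper's argument. Both proofs start from the Kirchhoff representation of $A^M_\mu=U_\mu(y)/\tau$, but you pass to the Friedlander limit $r\to\infty$ first, obtaining the radiation field as the source integrated over the limiting characteristic hyperplane $t'=u+\om\cdot x'$, and only then change variables $x'\mapsto y'=x'/t'$ (with $t'=u/(1-\om\cdot y')$) to land directly on the unit hyperboloid. The $t'$-powers and $(1-|y'|^2)$-powers cancel exactly, and the algebraic identity $L^\mu\hat x_\mu(y')=-(1-\om\cdot y')$ kills the Poisson denominator, giving $\lim rA^M_L=\q_\infty$ on the nose. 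The paper instead keeps $(t,x)$ finite, proves a change-of-variables lemma expressing $\int_{H_1}F\,dH_1$ as an integral over the backward cone from $(t,x)$, subtracts this from the Kirchhoff integral for $tA^M_L$, and shows the resulting error integral $\to 0$ as $t\to\infty$; the same cancellation appears there as the factor $2tR^2(1-\langle\eta,\om\rangle)+qR(t-R\langle\eta,\om\rangle)$ being $O((R-\tfrac{t+r}{2}))$. Your route avoids the error estimate entirely but requires two points the paper's estimate sidesteps: (i) the interchange of $r\to\infty$ with the $x'$-integral needs dominated convergence over a domain (the ellipsoid $|x'|+|x-x'|\le t$) that expands with $r$, which does work given $\alpha\ge 7/4$ but should be said; and (ii) $U_\mu$ must be fixed to be the one produced by the Kirchhoff construction — here that is unproblematic because the retarded integral actually converges for $t-r>0$ (the source support constraint $|x'|\le t'$ keeps $(t',x')$ away from the spacetime origin on the backward cone when $t>r$), is automatically degree $-1$ homogeneous, and by strong Huygens agrees with the paper's $A^M_\mu$ (data at $t=2$) whenever $q<-4$, which is the regime relevant for the radiation field. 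Your closing remark about the double indicial root $s=1$ and the need to select the log-free branch is accurate and is exactly what the forward problem's bound $(1-|y|^2)^{-1/2}U_\mu\to\mathcal U_\mu(\om)$ implicitly records.
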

We note that while in the forward problem, the analogous relation is just a corollary of the Lorenz gauge condition, in the backward problem one needs to prove it for arbitrary $a_\pm(y)$ without assuming the gauge condition. This is a crucial step in our proof.


We now define the approximate solutions:
\def\az{A^{(0)}}
\def\phiz{{\phi^{(0)}}}
\begin{equation}
\begin{split}
    \az_\mu&=A^M_\mu+\chi(\frac\qq r) \left(\frac{F_\mu(q,\om)}r+\frac{F^{(1)}_\mu(q,\om)}{r^2}\right)+\q_\infty \chi_{ex}(q) \delta_{0\mu} r^{-1},\\
    \phiz&=\tau^{-\frac 32}\left(e^{i\tau+i\int (\az)^\tau d\tau} a_+(y)+e^{-i\tau-i\int (\az)^\tau d\tau} a_-(y)\right),
    \end{split}
\end{equation}
where $A^M_\mu$ is exactly $U_\mu(y)/\tau$ when $t-r\geq 4$ and suitably modified near the light cone, and $F_\mu^{(1)}$ is a second-order approximation introduced for technical reasons. We then impose the condition on $F_\mu$, which can be viewed as the Lorenz gauge condition at null infinity:
\begin{equation}\label{admissiblecondintro}
    F_L+F^M_L+\q_\infty \chi_{ex}(q)=\q_\infty
\end{equation}
where $F^M_\mu$ is the radiation field of $A^M_\mu$. Note that it would not be possible to impose this without Proposition \ref{propintro}, because $F_\mu$ decay to zero as $q\rightarrow -\infty$. In fact, by Proposition \ref{propintro} we know that $F^M_L=\q_\infty$ when $q=r-t\leq -4$. Therefore $F_L$ is actually compactly supported.

We are now ready to state our next result.
\begin{thm}[Scattering from infinity]\label{mainthm2}
    Consider a set of scattering data $(a_+(y),a_-(y),F_\mu(q,\om))$ which satisfies the condition \eqref{admissiblecondintro}. Then, there exists a global solution to the reduced mMKG system with the exact same asymptotic behavior described by the scattering data, which satisfies the Lorenz gauge condition, hence also a solution to the original mMKG system. More precisely, for the approximate solution $(\az_\mu,\phiz)$ defined above, there exists $(v_\mu,w)$ such that $(\az_\mu+v_\mu,\phiz+w)$ solves the mMKG system, with $v_\mu$ and $w$ being lower order terms.
\end{thm}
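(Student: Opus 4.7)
\medskip

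\textbf{Plan of proof.} The strategy is the classical approximate-solution plus correction scheme for scattering from infinity, specialized to the coupled wave/Klein--Gordon structure of \eqref{reducedMKG}. I would first verify that the ansatz $(\az_\mu, \phiz)$ constructed from the scattering data is an approximate solution in the sense that the source terms
\begin{equation*}
    S^A_\mu := -\Box \az_\mu - \Im(\phiz\,\overline{D^{\az}_\mu \phiz}), \qquad S^\phi := (-\Box + 1)\phiz - 2i(\az)^\nu \pa_\nu \phiz + (\az)^\nu (\az)_\nu \phiz
\end{equation*}
decay strictly faster than the natural energy level, both in the hyperboloidal interior $t-r\geq 4$ and in the exterior $t-r\leq 4$. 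In the interior the cancellation is by design: the logarithmic phase $e^{\pm i\int (\az)^\tau d\tau}$ is exactly what removes the leading-order Klein--Gordon interaction with $U_\mu/\tau$, while $F_\mu^{(1)}$ eats the next-order wave source. In the exterior, the compatibility condition \eqref{admissiblecondintro}, combined with Proposition~\ref{propintro}, is precisely what is needed so that the ``total'' radiation field of $\az_L$ matches the charge contribution $\q_\infty$ and $\az$ asymptotically satisfies the Lorenz gauge; without \eqref{admissiblecondintro} the error $S^A_\mu$ would pick up a non-integrable long-range piece along the light cone.

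Next I would set up the correction $(v_\mu, w)$ as a backward problem. For a sequence $T_n\to\infty$, solve the reduced mMKG system with $(A_\mu,\phi)=(\az_\mu+v_\mu,\phiz+w)$ and zero data $(v_\mu,\pa_t v_\mu,w,\pa_t w)|_{\tau=T_n}=0$ posed on a truncated hyperboloid (glued to a standard constant-$t$ slice in the exterior). The equations take the schematic form
\begin{equation*}
    -\Box v_\mu = \mathcal N^A_\mu(v,w;\az,\phiz) + S^A_\mu, \qquad (-\Box+1)w = \mathcal N^\phi(v,w;\az,\phiz) + S^\phi,
\end{equation*}
where $\mathcal N^A_\mu$ is at least linear in $(v,w)$ and $\mathcal N^\phi$ contains the troublesome long-range term $2i(\az)^\mu \pa_\mu w$. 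I would close the analysis by a bootstrap on weighted hyperboloidal energies for $\Gamma^I(v,w)$ with $\Gamma$ ranging over translations and Lorentz boosts (as in the hyperboloidal vector-field method of Klainerman and LeFloch--Ma), supplemented in the exterior region by $r$-weighted energy estimates as in \cite{KlainermanWangYang,FangWangYang}. The vanishing data at $\tau=T_n$ combined with the integrability of $\|S^A\|_{L^2_H}$ and $\|S^\phi\|_{L^2_H}$ in $\tau$ yields uniform-in-$n$ bounds, so a weak limit $v_\mu, w$ as $n\to\infty$ produces the desired global solution with the prescribed asymptotic profile and vanishing remainder; this is where I would reuse the basic architecture from \cite{CL,LiliHe21,LindbladSchlue1}.

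The main obstacle I expect is closing the bootstrap for the long-range interaction in the $w$-equation: the term $2i(\az)^\tau \pa_\tau w$ with $(\az)^\tau\sim U(y)/\tau$ is borderline with respect to the natural hyperboloidal energy, and a naive Gronwall argument loses a power of $\tau$. The remedy is to conjugate $w$ by the same logarithmic phase used in $\phiz$, i.e.\ to estimate $\tilde w = e^{\mp i\int (\az)^\tau d\tau} w$ rather than $w$ itself, so that $\pa_\tau$ of the phase cancels exactly the long-range piece; this is the backward analogue of the forward asymptotic analysis. A secondary but real difficulty is the exterior/interior matching near $t-r=4$, where the two foliations must be patched and where $F_L$ being compactly supported (thanks to \eqref{admissiblecondintro} and Proposition \ref{propintro}) is essential for the patching to produce an integrable error.

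Finally I would verify the Lorenz gauge. By construction $\az$ satisfies it to leading order, and the error in $\pa^\mu \az_\mu$ is sourced only by the subleading corrections. Setting $\lambda = \pa^\mu(\az_\mu + v_\mu)$, one checks from \eqref{eqoflambda} that $\Box \lambda = |\az+\phiz + \cdots|^2 \lambda + (\text{source from approximate solution})$, and the source decays like the $S^A$ and $S^\phi$ errors above. Since $\lambda$ satisfies a Klein--Gordon type equation with a small potential and vanishes to sufficient order at $\tau=\infty$, uniqueness in the backward problem (another energy estimate on the same hyperboloids) forces $\lambda \equiv 0$, so the constructed solution solves not only the reduced system but the full \eqref{mMKG}.
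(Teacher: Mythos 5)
Your plan reproduces the paper's architecture faithfully: build $(\az_\mu,\phiz)$ from the scattering data, exploit Proposition~\ref{propintro} and the compatibility condition \eqref{admissiblecondintro} to make $\pa^\mu \az_\mu$ decay at the improved rate $O(\varepsilon\langle t+r\rangle^{-2})$, solve backward from $\tau=T$ on a truncated hyperboloid glued to an exterior constant-$t$ slice with vanishing data, close a bootstrap on the hyperboloidal energies $E_w, E_{KG}$, pass to the limit $T\to\infty$, and then verify the Lorenz gauge by propagating $\lambda_T=\pa^\mu(\az_\mu+v_{\mu,T})$ backward from $\tau=T$. This is exactly the route the paper takes, so you have the proof.

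One point where your assessment of the difficulty diverges from the paper: you flag the long-range term $2i(\az)^\tau\pa_\tau w$, with $(\az)^\tau\sim U(y)/\tau$, as the main obstacle and propose conjugating $w$ by the phase $e^{\mp i\int(\az)^\tau\,d\tau}$ to remove it. The paper does not do this, and the concern is in fact not there: since $|(\az)^\tau|\lesssim\varepsilon\tau^{-1}$, the corresponding Gronwall-type term in the Klein--Gordon energy estimate enters with coefficient $\varepsilon\tau^{-1}$ and therefore only produces a factor of the form $(T/\tau)^{C\varepsilon}$, not a full power of $\tau$; this is absorbed into the bootstrap exponents (and in the difference estimate used to prove Cauchy convergence the same $(T_1/\tau)^{C\varepsilon}$ factor appears and is again harmless). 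Your phase-conjugation remedy would also work, and would eliminate the Gronwall factor entirely, but it is an unnecessary extra layer. A smaller stylistic difference: the paper proves $v_{\mu,T}, w_T$ are Cauchy in a sup-norm by an energy estimate on the differences $\hat v_\mu, \hat w$, rather than extracting a weak limit; this yields a genuine limit solution with the asymptotic profile directly, without appealing to compactness.
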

We note that while the forward result gives the asymptotic behavior of the solution using $a_\pm(y)$ and radiation fields, it does not answer the question of whether arbitrarily given scattering data admits a global solution, and it is the backward result that gives the affirmative answer.

\subsection{Related works}
We also mention other related results on equations apart from the massive or massless Maxwell-Klein-Gordon equations.

In the forward direction, the radiation field of a wave equation is first studied by Friedlander \cite{F62}. The asymptotics of nonlinear wave equations were studied in \cite{BB15,BVW15}, and the Einstein equation in higher dimensions in \cite{W14,W13}. In \cite{L17}, Lindblad studied the asymptotics of the $1+3$ dimensional Einstein equation in wave coordinates with a logarithmically modified behavior at null infinity and also an interior asymptotics. In recent years, the asymptotics of the quasilinear wave equation were also studied in \cite{DengPusateri,Yuforward}. Polyhomogeneous expansions were derived by Hintz and Vasy \cite{HVMin}, for the Einstein equation in generalized wave coordinates. 
For the linear Klein-Gordon equation, the asymptotics can be found in \cite{H97}. For nonlinear Klein-Gordon equations in one spatial dimension, the asymptotic behavior is derived by Delort \cite{D1,D2} (see also \cite{LindbladSoffer05forward}). With variable coefficients on nonlinearities, the problem was studied by Lindblad-Soffer \cite{LS15} and
Sterbenz \cite{sterbenz2016dispersive}, with further results in \cite{LLS20,LLS21,LLSS22,LuhrmannSchlag21}.

For the backward problem, i.e. scattering from infinity, Flato-Simon-Taflin \cite{flato1987global} proved the existence of the modified wave operators for the Maxwell-Dirac equations. Lindblad-Soffer \cite{LindbladSoffer05backward} studied the scattering from infinity for 1D nonlinear Klein-Gordon equations. The scattering problem for blackhole spacetimes was studied in \cite{Dafermos2013scattering,Dafermos2018scattering,angelopoulos2020non}. Yu \cite{Yubackward} established the existence of modified wave operators for a scalar field equation satisfying the weak null condition. Apart from \cite{LindbladSchlue1}, Lindblad-Schlue recently studied scattering from infinity for wave equations with sources modeling slow decay along the light cone \cite{LindbladSchlue2}.

We also note that one can study similar problems on the Fourier side. For the related wave-Klein-Gordon models, Ionescu-Pausader \cite{IPmodel} showed the global existence using the spacetime resonance method and also derived the modified scattering (in the forward direction; then they also did it for the Einstein-Klein-Gordon equations \cite{IPEKG}). Later Ouyang \cite{Ouyang} studied the corresponding backward problem.

\subsection{Idea of proof}
\subsubsection{Global existence}
We note two important results on the global existence and decay estimates of solutions of the mMKG equations \cite{KlainermanWangYang,FangWangYang}. The work \cite{KlainermanWangYang} constructs solutions in the exterior of a light cone. This resolves the problem that previously, the hyperboloidal foliation method can only deal with compactly supported initial data. With the existence and estimates of the exterior solution, one gets control of the energy on the light cone which makes it possible to apply the hyperboloidal foliation in the interior. The interior part is done in \cite{FangWangYang} (see also an earlier work on the interior assuming compact initial data \cite{psarelli1999asymptotic}).

\vspace{0.5ex}
In this paper, we shall make use of the results to get preliminary decay estimates which will be used when deriving asymptotic behaviors. Since the Klein-Gordon field concentrates more in the interior, the corresponding work \cite{FangWangYang} is more relevant to our work. (Nevertheless, we note that the result in \cite{KlainermanWangYang} also plays a fundamental role in making it possible for us to study this much more general, non-compactly supported initial data.) We will give a brief review of the estimates in the interior. The only adaptation we make here is that we do the calculation of higher order commutators and higher order estimates of the flux on the null cone so that we can apply them to our case, where we need more than two orders of vector fields.

\subsubsection{Asymptotics}
In the Lorenz gauge $\pa^\mu A_\mu=0$, the mMKG system becomes
\begin{equation*}
\begin{split}
-\Box A_\mu&=\Im(\phi \cdot\overline{D_\mu\phi}),\\
-\Box\phi+\phi&=2iA^\mu \pa_\mu\phi-A^\mu A_\mu\phi.
\end{split}
\end{equation*}
This is a wave-Klein-Gordon type coupled system. With the assumption of compactly supported initial data, we studied a similar model in \cite{CL}.
Here due to the effect of charge, generally one cannot expect compactly supported initial data. However, we will see that one still has a relatively good decay estimate of the complex scalar field $\phi$ near the light cone, which serves, together with the initial data in the interior, as an ``initial condition" for us to derive the asymptotics of $\phi$ in the interior. 

To derive the asymptotics, we first decompose the Klein-Gordon equation. 
The main term on the right hand side is $\sim 2i A^\tau \pa_\tau\phi$, where $A^\tau=-\frac{x_\mu}\tau A_\mu$ and $\pa_\tau=\frac{x^\mu}\tau \pa_\mu$, since derivatives tangent to the hyperboloids decay better, and cubic terms are ignorable. So approximately, using $-\Box=\pa_\tau^2+3\tau^{-1}\pa_\tau-\tau^{-2}\triangle_y$, we have for $\Phi=\tau^\frac 32 \phi$ that
\begin{equation*}
    \pa_\tau^2 \Phi+\Phi=2i A^\tau \pa_\tau \Phi+\cdots
\end{equation*}
where the remainder decays better in $\tau$.
Now define $\Phi_\pm=e^{\mp i\tau} (\pa_\tau\Phi\pm i\Phi)$, so $\pa_\tau \Phi=\frac 12(e^{i\tau}\Phi_++e^{-i\tau}\Phi_-)$. Therefore
\begin{equation*}
    \pa_\tau \Phi_\pm=e^{\mp i\tau} i A^\tau (e^{i\tau}\Phi_++e^{-i\tau}\Phi_-),
\end{equation*}
so for example for $\Phi_+$ we get $\pa_\tau \Phi_+=iA^\tau \Phi_+$ plus oscillating part. The oscillating part has extra cancellation after integration, so it also affects little. Therefore we expect $\Phi_+$ to be $e^{i\int A^\tau d\tau}$ multiplied by a function of $y$ (and similarly for $\Phi_-$), and hence obtain
\begin{equation*}
    \begin{split}
        \phi&\sim \tau^{-\frac 32}(e^{i\tau+i\int A^\tau d\tau} a_+(y)+e^{-i\tau-i\int A^\tau d\tau} a_-(y)),\\
        \pa_\tau \phi &\sim i\tau^{-\frac 32}(e^{i\tau+i\int A^\tau d\tau} a_+(y)-e^{-i\tau-i\int A^\tau d\tau} a_-(y)).
    \end{split}
\end{equation*}

One can plug this expansion in the right hand side of the wave equations. In \cite{CL} we studied nonlinearities like $\phi^2$ and $(\pa_t\phi)^2$, and the expansion gives
\begin{equation*}
    2\tau^{-3} a_+(y)a_+(y)+\tau^{-3}e^{2i\tau+2i\int A^\tau d\tau}(a_+(y))^2+\tau^{-3}e^{-2i\tau-2i\int A^\tau d\tau}(a_-(y))^2,
\end{equation*}
where we see that the last two terms are oscillating. Then we performed an integration by part argument using the representation formula, which requires a tedious calculation. Remarkably, the special structure of the nonlinearity $\Im(\phi\, \overline{D_\mu\phi})$ here ensures that there will be no such oscillating terms: Modulo lower order terms, the nonlinearity $\sim -\frac{x_\mu}\tau\Im(\phi\, \overline{\pa_\tau\phi})$, and plugging in the expansions we get
\begin{equation*}
    \Im(\phi\, \overline{\pa_\tau\phi})\sim \tau^{-3}(|a_-(y)|^2-|a_+(y)|^2).
\end{equation*}
Therefore, the main terms on the right hand side of the wave equations of $A_\mu$ are $-\frac{x_\mu}\tau \tau^{-3} (|a_-(y)|^2-|a_+(y)|^2)$.\footnote{We remark that for the wave-Klein-Gordon model, the pair $a_\pm(y)$ are in fact conjugate to each other. Here this is nonzero because $\phi$ is complex.} These are still of the form $-\Box \psi=\tau^{-3} Q(y)$, which is studied in \cite{CL}: One can find a function $\Psi(y)$ which satisfies the equation $$-\Box(\frac{\Psi(y)}\tau)=\tau^{-3} Q(y),$$
or equivalently, an elliptic equation on the unit hyperboloid $$\triangle_y\Psi+\Psi=Q(y).$$
In practice, to avoid the jump along the light cone, we proceed in this way: we start solving the wave equation at a positive time, say $t=2$, with the source $\tau^{-3}Q(y)$. One can then show that when $t-r\geq 4$, the solution exactly equals $\Psi(y)/\tau$. This in fact comes from the strong Huygens' principle.  
Applying this to the equations of $A_\mu$, we see that
\begin{equation*}
    A_\mu\sim U_\mu(y)/\tau
\end{equation*}
in the interior. This also gives the main part of the phase correction on the expansion of the scalar field: $\int A^\tau d\tau\sim U(y)\ln\tau$, where $U(y)=U^\tau(y)$. 

For the wave fields, we still need to describe the behaviors at null infinity. It is not hard to show the existence of radiation fields, defined as the limit of $rA_\mu$ as $r\rightarrow\infty$ with $q=r-t$ and the angular variable $\om$ fixed. However, there are additional structures. If we decompose the Lorenz gauge condition $\pa^\mu A_\mu=0$ in the null frame, we see that $\pa_q A_L$ behaves like derivatives tangent to the outgoing light cone, which are expected to decay better towards null infinity. This means that near null infinity we have
\begin{equation*}
    \pa_q A_L\sim 0.
\end{equation*}
Note that due to the charge effect, there is a $\q_0 r^{-1}$ tail of $A_\mu$ (we can make it only on $A_0$) at spacelike infinity. The above relation implies that, this effect on the component $A_L$ will be preserved along the null infinity, all the way to the
timelike infinity. One can show that
\begin{equation*}
    rA_L\rightarrow \q_0\quad \text{as $r\rightarrow\infty$, for every $q=r-t$ and $\om$ fixed.}
\end{equation*}
We see that the $L$ component of the radiation fields must be $\q_0$, which is the charge of the system given by the initial data. This structure comes from the gauge condition itself, and was first observed in \cite{CKL} for the massless MKG system. 

However, the charge itself behaves rather differently for the massive MKG system. In the massless system, the scalar field satisfies a wave equation, and then mainly propagates along the null directions. Using the conservation law $\pa^\mu J_\mu$, one can express the charge using the asymptotics of the scalar field at null infinity. For the massive field we study here, it decays much more towards null infinity, and one can verify that the flux is zero at null infinity when we integrate the divergence identity. This is not a contradiction to the conservation law, but the reflection of the fact that the charge is concentrated at timelike infinity. We can show that
\begin{equation*}
    \q_0=\int_{|y|\leq 1} |a_-(y)|^2-|a_+(y)|^2\, dH_1.
\end{equation*}
This will also be extremely important for the backward problem.

\subsubsection{Scattering from infinity}
We now discuss the backward problem. We shall study the problem for the reduced mMKG system first, and then show that the constraint we impose on the scattering data ensures that the solution we construct also solves the original mMKG system.

First, for scattering data, we should give the pair $a_\pm(y)$, which determines the size of the Klein-Gordon (scalar) field. It should also give the leading behavior of the nonlinearities that appeared in the wave equation of $A_\mu$. In fact, as in the forward problem, we can consider a part $A_\mu^M$ satisfying the equation
\begin{equation*}
    -\Box A_\mu^M=\frac{x_\mu}\tau \tau^{-3}(|a_+(y)|^2-|a_-(y)|^2)
\end{equation*}
with vanishing data at $\{t=2\}$. Then we have $A^M_\mu=U_\mu(y)/\tau$ for some $U_\mu(y)$ which should be the leading behavior of $A_\mu$ towards timelike infinity. The other piece of scattering data is the radiation field $F_\mu(q,\om)$. We know that $A^M_\mu$ contributes to the radiation field, so what we impose should be from the remaining part, and we expect (and in fact, also need) them to decay in $\langle q\rangle$. Then we might be tempted to define the approximate solution as 
\begin{equation*}
    \az_\mu=A^M_\mu+\chi(\frac \qq r)\frac{F_\mu(q,\om)}r,\quad \phi=\tau^{-\frac 32}(e^{i\tau+i\int (\az)^\tau d\tau} a_+(y)+e^{-i\tau-i\int (\az)^\tau d\tau} a_-(y)),
\end{equation*}
which is similar to \cite{CL}, so one can apply the method used in that work to get a global solution that scatters to the asymptotic behavior given by $(a_+(y),a_-(y),F_\mu(q,\om))$. However, the solution we get in this way will not have the $r^{-1}$ tail at spacelike infinity, hence not exactly what we want. 

Now since $\chi_{ex}(q)r^{-1}$ is an exact solution of the linear wave equation, we can then consider adding a part $C_{ex}\chi_{ex}(q)r^{-1}$, where $C_{ex}$ is a constant, $\chi_{ex}(q)$ a cutoff equal to $1$ when $q\geq -1/2$ and $0$ when $q\leq -1$, to the approximate solution, say the $\az_0$ component, so that there is a part that represents the effect of charge. We see that one can find backward solutions of the reduced mMKG system for any $C_{ex}$. However, we still need the Lorenz gauge condition to hold, which requires the approximate solutions to satisfy this condition approximately. Recall that the Lorenz gauge condition written in the null frame reads $\pa_q A_L\sim 0$, so similar to the forward problem, we expect the $L$ component of the (total) radiation fields to be conserved along $\pa_q$ at null infinity, this time from the timelike infinity to the spacelike infinity. Therefore, $C_{ex}$ should be determined from the information at timelike infinity.

This raises one of the key questions in this work: What is the $L$ component of the radiation fields of $A^M_\mu$, or equivalently $U_\mu(y)/\tau$, as $q\rightarrow -\infty$? Recall that $U_\mu(y)$ is determined by $a_\pm(y)$ by a wave equation or equivalently an elliptic equation on the unit hyperboloid. It is not hard to see that the radiation field of $U_\mu(y)/\tau$ is independent of $q$, and equals
$$\lim_{|y|\rightarrow 1} (1-|y|^2)^{-\frac 12} U_\mu(y),$$
so what we are asking is
$$\lim_{|y|\rightarrow 1} (1-|y|^2)^{-\frac 12}U_L(y)=\, ?$$

Up to now, it is not clear at all whether this limit as $q=r-t\rightarrow -\infty$ will be independent of the angular variable $\om$, but we need it to be because otherwise with the exterior part being $C_{ex}\chi_{ex}(q)r^{-1}$, there is no hope that the Lorenz gauge condition will be true. We however show in this paper, through a careful calculation of the limiting measure, that the limit, is indeed a constant independent of $\om$, and is nothing but the integral
\begin{equation*}
    \frac{1}{4\pi}\int_{|y|\leq 1} |a_-(y)|^2-|a_+(y)|^2 \, dH_1
\end{equation*}
which is the same expression as in the relation derived in the forward problem (of course, the $a_\pm(y)$ there is determined from initial data, and here $a_\pm(y)$ is where we start). This is the result of Proposition 1.

We denote the value of this integral as $\q_\infty$ to represent the charge computed at infinity. Then we are ready to write down the corrected expression of $\az$:
\begin{equation}
    \az_\mu=A^M_\mu+\chi(\frac\qq r) \left(\frac{F_\mu(q,\om)}r+\frac{F^{(1)}_\mu(q,\om)}{r^2}\right)+\q_\infty \chi_{ex}(q) \delta_{0\mu} r^{-1},
\end{equation}
where the $F^{(1)}$ is a second order approximation for technical reasons. Now, to make the approximate Lorenz gauge condition at null infinity hold, it suffices to impose the constraint that the $L$ component of the radiation fields of $\az_\mu$ is always equal to $\q_\infty$. Since when $q\leq -4$ and $q\geq 2$ it is already $\q_\infty$ without $F_L$, this constraints implies that $F_L$ is compactly supported in $-4\leq q\leq 2$.

While we have discussed the Lorenz gauge condition towards null infinity in detail, and argued that $\az_\mu$ along with $\phiz$ are now a good approximate solution for both the reduced and original system, we still have to ensure that the condition is also approximately satisfied in the interior. Since the $A^M_\mu$ (or $U_\tau(y)/\tau$) are the leading terms in the interior, we should look at the behavior $\pa^\mu A^M_\mu$ towards timelike infinity. To do this we can commute the wave equations of $A^M_\mu$ with $\pa^\mu$ (i.e. $-\pa_t$, $\pa_i$). Since $\pa_\mu x^\mu=4$, $x_\mu \pa^\mu=\tau \pa_\tau$, we have
\begin{equation*}
\begin{split}
    \Box \pa^\mu A^M_\mu&=(\pa^\mu x_\mu)\tau^{-4} (|a_-(y)|^2-|a_+(y)|^2)+x_\mu \pa^\mu (\tau^{-4} (|a_-(y)|^2-|a_+(y)|^2))\\
    &=4\tau^{-4} (|a_-(y)|^2-|a_+(y)|^2)-\tau\c 4\tau^{-5} (|a_-(y)|^2-|a_+(y)|^2)=0,
\end{split}
\end{equation*}
so $\pa^\mu A^M_\mu$ satisfies a linear homogeneous wave equation. Now since at $t=2$ we have $\pa^\mu A^M_\mu$ compactly supported in $\{|x|\leq 2\}$, by strong Huygens' principle we see that $\pa^\mu A^M_\mu$ is supported in $\{0\leq t-r\leq 4\}$, a region that should be categorized as near light cone rather than interior. Therefore the Lorenz gauge condition is also reasonably satisfied in the interior. Then we can use similar backward energy estimates introduced in \cite{CL} to obtain a backward solution, which can be shown to also satisfy the Lorenz gauge condition.

\subsection*{Acknowledgements}The author would like to thank his advisor, Hans Lindblad, for many helpful discussions and his encouragement on studying this problem.

\section{Notations, Preliminaries}
\subsection{Hyperboloidal coordinates}
We introduce the hyperboloidal coordinates $\tau=\sqrt{t^2-r^2}$ where $r=|x|$, and $y=x/t$, so $|y|=|x|/t$. We also define the angular variables $\om\in\mathbb{S}^2$ and $\om_i=x_i/|x|$. The $\tau$-hyperboloid is defined by $H_\tau=\{(t,x)\colon t^2-|x|^2=\tau^2\}$. 

In $(\tau,y_i)$ coordinates, we have\footnote{We use the Einstein summation convention. Also, when the repeated index is spatial, we define the expression to be the sum regardless of whether it is upper or lower, as the spatial part of the Minkowski metric is Euclidean.} \[\pa_\tau=\frac{t}{\tau}\pa_t+\frac{x^i}\tau \pa_i,\quad \pa_{y_i}=\frac{\tau}{(1-|y|^2)^{\frac 32}}\left((y_i y_j+\delta_{ij})\pa_j+y_i \pa_t\right).\]
We also have the dual frame 
\[d\tau=\frac t\tau dt-\frac{x^i}\tau dx^i,\quad dy^i=-\frac{x_i}{t^2} dt+\frac 1t dx^i.\]
Then, for a vector field $V^\mu$, we can define the quantities $V^\tau$ and $V^{y_i}$. For example, in this paper we frequently use $A^\tau:=-\frac{x_\mu}\tau A_\mu$.

We define the hyperboloidal orthonormal frame $\{\pa_\tau,\tau^{-1}\Om_{0r},e_1,e_2\}$ where $\Om_{0r}=\om^i \Om_{0i}$, and $\{e_1,e_2\}$ is a set of orthonormal frames on the $2$-sphere.

We will also use truncated versions of hyperboloids in this paper: $\hat H_\tau=H_\tau\cap \{t-r\geq 1\}$, and $\widetilde H_\tau=H_\tau\cap \{t-r\geq r^\frac 12\}$.

\subsection{Vector fields}We use $Z$ to denote the vector fields in the set \[\mathcal{Z}=\{\pa_\mu,\Omega_{0i}=t\pa_i+x_i\pa_t,\Omega_{ij}=x_i\pa_j-x_j\pa_i\}.\]
It is straightforward to verify the schematic commutation relation $[Z,Z]=\pm Z$. We also use $\Om$ to denote boost and rotation vector fields.
Sometimes we also use the scaling vector fields $S=t\pa_t+x^i\pa_i$. We use the notation $(Z,S)$ to denote the set above added with $S$.


We use the multi-index notation: for $I=(\b_1,\b_2,\cdots,\b_n)$, we define $$Z^I=Z_1^{\b_1} Z_2^{\b_2} \cdots Z_n^{\b_n}$$ where $Z_1,\cdots Z_n \in \mathcal{Z}$, The size of a multi-index $I=(\beta_1,\beta_2,\cdots,\beta_n)$ is defined as $|I|:=\sum_{i=1}^n \beta_i$.

Note that the rotation and boost vector fields are tangent to the hyperboloids $H_\tau$, and we have the following relation between $\pa_{y_i}$ and vector fields:
\begin{equation}\label{dy}
    \pa_{y_i}=\frac 1{1-|y|^2} \left(\Omega_{0i}+\frac{x_j}{t} \Omega_{ij}\right).
\end{equation}
Moreover, the Laplacian-Beltrami operator on the unit hyperboloid $\triangle_y$ can be expressed by vector fields:
\begin{equation}\label{laplaciany}
    \triangle_y=\sum_{i=1}^3 \Omega_{0i}^2+\sum_{j=1}^3 \sum_{i=1}^j \Omega_{ij}^2.
\end{equation}
We also use $\triangle_x$ and $\triangle_\om$ to denote the Laplacian on $\mathbb{R}^3$ and $\mathbb{S}^2$ respectively.


\subsection{Integration}
Apart from the standard integration with respect to induced volume elements, it is convenient to consider the integral of a function $u(t,x)$ on hyperboloids with respect to the measure $dx$.
\[\int_{H_\tau} u \, dx:=\int_{\mathbb{R}^3}u(\sqrt{\tau^2+|x|^2},x)\, dx,\]
In this paper, we define the $L^2$ norm on the hyperboloid using the measure $dx$ instead:
\[||u||_{L^2(H_\tau)}=\left(\int_{H_\tau} |u|^2\, dx\right)^\frac 12.\]
We also use similar definitions for the $L^2$ norm on truncated hyperboloids $\hat H_\tau$, $\widetilde H_\tau$.

\subsection{Null frame}
Besides the hyperboloidal frame, we also need the null frame, which is fundamental in the analysis of wave equations. It is defined by the set $\{L,\Lb,e_1,e_2\}$, where $L=\pa_t+\pa_r$, $\Lb=\pa_t-\pa_r$, with $\pa_r=\om^i\pa_i$, and $\{e_1,e_2\}$ is an orthonormal frame on the sphere. We also extensively use the function $q=r-t$, and the derivative $\pa_q=-\frac 12(\pa_r-\pa_t)$. We also use the tangential derivatives $\pab_i:=\pa_i-\om_i\pa_r$.

\subsection{Lie derivatives}
We define the Lie derivative of tensor fields with respect to a vector field by
\begin{multline}
\Lie_Z K^{\a_1\a_2\cdots \a_s}_{\b_1\b_2\cdots \b_r}=Z\, K^{\a_1\a_2\cdots \a_s}_{\b_1\b_2\cdots \b_r}-(\pa_\nu Z^{\a_1})\, K^{\nu\a_2\cdots \a_s}_{\b_1\b_2\cdots \b_r}-\cdots-(\pa_{\nu} Z^{\a_s})\, K^{\a_1\a_2\cdots \nu}_{\b_1\b_2\cdots \b_r}+(\pa_{\b_1}Z^\nu)\,  K^{\a_1\a_2\cdots \a_s}_{\nu\b_2\cdots \b_r}\\
    +\cdots+(\pa_{\b_r} Z^\nu)\, K^{\a_1\a_2\cdots \a_s}_{\b_1\b_2\cdots \nu}.
\end{multline}
In particular, if $X$, $Y$ are vector fields, we have $\Lie_X Y=[X,Y]$. In this paper, since the background metric is Minkowski, and the vector fields we use will be Killing, the Lie derivatives commute with contractions (we will see later that they also commute with partial derivatives). The factors of the form $\pa Z$ in the definition above will also be constants.

\subsection{Curvature components}
We can write components of the curvature $2$-form $G$ in the null frame, denoted as
\begin{equation}
    \a_B[G]:=G_{Le_B},\quad \ab_B[G]:=G_{\Lb e_B},\quad \rho[G]:=\frac 12 G_{L\Lb},\quad \sigma[G]:=G_{e_1 e_2},\quad B=1,2.
\end{equation}
We can also consider its components in the hyperboloidal frame. We have
\begin{equation}
    G(\pa_\tau, \tau^{-1}\Om_{0r})=\rho[G],\quad G(\pa_\tau,e_B)=\frac{t+r}\tau \a[G]+\frac{t-r}\tau \ab[G],\quad G(\tau^{-1}\Om_{0r},e_B)=\frac{t+r}\tau \a[G]-\frac{t-r}\tau \ab[G].
\end{equation}
Later, as in \cite{FangWangYang}, we will see that the components in the hyperboloidal frame have similar bounds, so we denote all components in this frame by $G^H$.

\section{The existence results}
In this section, we brief review the results in \cite{KlainermanWangYang,FangWangYang} with slight adaptations to our case. This part is done in a gauge independent way.
\subsection{Higher order commutators}
Here we view $D_X\phi=X^\mu D_\mu\phi$ as a scalar field.\footnote{In some literatures, the notation $D_X D_Y \phi$ could mean the $X,Y$-component of the tensor $D_\mu D_\nu\phi$, but here it represents $D_X(Y^\mu D_\mu \phi)$.} Define the trilinear form
\begin{equation*}
    Q(F,\phi,Z):=2i Z^\nu F_{\mu\nu} D^\mu \phi+i\pa^\mu(Z^\nu F_{\mu\nu}) \phi.
\end{equation*}
We have 
\begin{equation*}
    [\Box_A,D_Z]\phi=Q(F,\phi,Z),
\end{equation*}
\begin{equation*}
    [\Box_A,D_X D_Y]\phi=Q(F,D_X \phi,Y)+Q(F,D_Y \phi,X)+Q(\Lie_X F,\phi,Y)+Q(F,\phi,[X,Y])-2F_{X\mu} F^{\mu}_{\, \, Y}\phi.
\end{equation*}
The key of the proof is the following relation
\begin{equation*}
    \begin{split}
        D_X Q(F,\phi,Y)&= D_X (2i Y^\nu F_{\mu\nu} D^\mu \phi+i\pa^\mu(Y^\nu F_{\mu\nu}) \phi)\\
        &=i X^\lambda (\pa_\lambda+iA_\lambda)(2Y^\nu F_{\mu\nu} D^\mu\phi+\pa^\mu(Y^\nu F_{\mu\nu})\phi)\\
        &=2i(\Lie_X Y)^\nu F_{\mu\nu}D^\mu\phi+2i Y^\nu \Lie_X F_{\mu\nu} D^\mu\phi+2i Y^\nu F_{\mu\nu} (\Lie_X+iA_X)D^\mu\phi\\
        &\quad +i\Lie_X (\pa^\mu (Y^\nu F_{\mu\nu}))\phi+i\pa^\mu(Y^\nu F_{\mu\nu})D_X \phi\\
        &=2i(\Lie_X Y)^\nu F_{\mu\nu}D^\mu\phi+2i Y^\nu \Lie_X F_{\mu\nu} D^\mu\phi+2iY^\nu F_{\mu\nu} D^\mu D_X\phi-2F_{X\mu}F^{\mu}_{\,\, Y}\phi\\
        &\quad +i\pa^\mu ((\Lie_X Y^\nu) F_{\mu\nu})\phi+i\pa^\mu(Y^\nu\Lie_X F_{\mu\nu})\phi+i\pa^\mu(Y^\nu F_{\mu\nu})D_X\phi\\
        &=Q(F,\phi,[X,Y])+Q(\Lie_X F,\phi,Y)+Q(F,D_X\phi,Y)-2F_{X\mu}F^{\mu}_{\,\, Y}\phi
    \end{split}
\end{equation*}
where we used
\begin{equation}\label{lieDcommutator}
    \begin{split}
        (\Lie_X+iA_X) D^\mu\phi&=D_X D^\mu\phi-D^\lambda\phi \pa_\lambda X^\mu=D^\mu (D_X\phi)+iF_{X}^{\, \, \mu}\phi-(\pa^\mu X^\lambda)D_\lambda \phi-D^\lambda\phi \pa_\lambda X^\mu\\
        &=D^\mu D_X\phi+iF_{X}^{\, \, \mu}\phi
    \end{split}
\end{equation}
since $X$ is Killing.

We can further calculate the third-order commutator
\begin{equation*}
    \begin{split}
        [\Box_A, D_X D_Y D_Z]\phi&=\Box_A D_X D_Y D_Z \phi-D_X \Box_A D_Y  D_Z \phi+D_X \Box_A D_Y D_Z\phi -D_X D_Y D_Z \Box\phi\\
        &=[\Box_A, D_X]D_Y D_Z\phi+D_X [\Box_A,D_Y D_Z]\phi\\
        &=Q(F,D_Y D_Z\phi,X)+D_X ([\Box_A,D_Y D_Z]\phi)
    \end{split}
\end{equation*}
Using the formula for the second-order commutator and the relation above, we see that the third-order commutator is of a similar form. Inductively, using $\Lie_Z Z=\pm Z$, one can show the schematic relation (where we omit all constant factors)
\begin{equation*}
        [\Box_A, D_Z^I]\phi= \sum_{|I_1|+|I_2|\leq |I|} Q(\Lie_Z^{I_1} F,D_Z^{I_2}\phi,Z)
        +\sum_{|I_3|+|I_4|+|I_5|<|I|} Z^\lambda \Lie_Z^{I_3} F_{\lambda\mu} \Lie_Z^{I_4} F^{\mu}_{\, \, \nu} Z^\nu D_Z^{I_5}\phi.
\end{equation*}
Note that since $Z$ are Killing, $\Lie_Z m=0$ and $\Lie_Z$ commutes with raising and lowering indices.


We also need to deal with the Lie derivatives of the current $1$-form. Using \eqref{lieDcommutator} we have
\begin{equation*}
    \Lie_Z J_\mu[\phi]=\Im (D_Z\phi\c\overline {D_\mu\phi})+\Im(\phi\c\overline{D_\mu D_Z\phi+i F_{Z\mu}\phi}).
\end{equation*}
We can use this inductively and show that schematically,
\begin{equation*}
\begin{split}
    \Lie_Z^I J_{\mu}[\phi]&=\sum_{|I_1|+|I_2|\leq |I|}\Im(D_Z^{I_1} \phi\c\overline{D_\mu D_Z^{I_2} \phi})+\sum_{|J|<|I|}\Lie_Z^{J} \Im(\phi\cdot\overline{iF_{Z\mu}\phi})\\
    &=\sum_{|I_1|+|I_2|\leq |I|}\Im(D_Z^{I_1} \phi\cdot\overline{D_\mu D_Z^{I_2} \phi})+\sum_{|I_3|+|I_4|+|I_5|<|I|}\Im(D_Z^{I_3}\phi\cdot\overline{i\Lie_Z^{I_4} F_{Z\mu} D_Z^{I_5}\phi})
    \end{split}
\end{equation*}
using $[Z,Z]=\pm Z$.

\subsection{Null cone flux}
We expect a part of $F$ governed by the charge. The chargeless part of $F$ is defined by
\begin{equation*}
    \tilde F=F-\q_0 r^{-2} \chi_{\{t-r>2\}} dt\wedge dr.
\end{equation*}
This decomposition is crucial in the exterior, but less important in the interior, where one may just do the estimate for the original Maxwell field $F$. However, the energy bounds from the exterior are all on the chargeless part $\tilde F$. Therefore, to do the energy estimates in the interior, we still need to establish the bound of the energy flux of $F$ on the null cone.

First we note that $|\rho[F]-\rho[\tilde F]|\lesssim |\q_0| r^{-2}$, and other components are the same. When we use the multiplier $S$, the energy flux of $F$ on the cone $C=\{t-r=1\}\cap\{t\geq 2\}$ is like
\begin{equation*}
    \int_{C} \rho[F]^2+\sigma[F]^2+r|\a[F]|^2 dvol,
\end{equation*}
so the difference, i.e. the integral with $F$ replaced by $F-\tilde F$, is integrable. Now we need to estimate the difference with vector fields, and we will show that the difference is also integrable for any number of vector fields.

Denote $\tau_+=t+r$, $\tau_-=t-r$. We now compute various commutators of vector fields. First we have
\begin{equation*}
    [\Om_{0i},L]=\frac tr\pab_i-\pa_i-\om_i\pa_t=-\om_i L+\frac{\tau_-}{r}\pab_i,\quad [\Om_{ij},L]=0.
\end{equation*}
In order to apply more Lie derivatives on $L$ and $\Lb$, we need the identities
\begin{equation*}
    [\Om_{0i},\pab_j]=-r^{-1}(\delta_{ij}-\om_i\om_j)\om^k \Om_{0k}, \quad [\Om_{ij},\pab_k]=-(\delta_{ik}-\om_i\om_k)\pab_j+(\delta_{jk}-\om_j \om_k)\pab_i.
\end{equation*}
We also have $\Om^I(\om_i)=O(1)$, $\Om^I (r)=O(r)$ when $t\approx r$ for any muli-index $I$. Therefore
\begin{equation*}
    \Lie_\Om^I \pab_i=O(r^{-1})\Om+O(1)\slashed\pa+O(r^{-1})\pa=O(1)L+O(r^{-1}\langle\tau_-\rangle)\Lb+O(1)\slashed\pa.
\end{equation*}
We also have $\Om^I (\tau_-)=O(\tau_-)$. Therefore using the $[\Om,L]$ formulas we have
\begin{equation*}
    \Lie_\Om^I L=O(1) L+O(r^{-1}\tau_-)\slashed\pa+O(r^{-2}\tau_-^2)\Lb.
\end{equation*}
For partial derivatives, we have $[\pa_t,L]=0$ and $[\pa_i,L]=r^{-1}\pab_i$. Therefore we have
\begin{equation*}
    \Lie_Z^I L=O(1) L+O(r^{-1}\tau_-)\slashed\pa+O(r^{-2}\tau_-^2)\Lb.
\end{equation*}

Similarly we have
\begin{equation*}
    [\Om_{0i},\Lb]=-\om_i\Lb+\frac{\tau_+}r \pab_i,\quad [\Om_{ij},\Lb]=0
\end{equation*}
which yields
\begin{equation*}
    \Lie_Z^I \Lb=O(1)\Lb+O(1)(\slashed\pa,L).
\end{equation*}

We shall make the following induction assumptions
\begin{equation*}
    |(\a,\sigma)[\Lie_Z^{I}(F-\tilde F)]|\lesssim |\q_0|\langle\tau_-\rangle r^{-3}, \quad |\rho[\Lie_Z^{I}(F-\tilde F)]|\lesssim |\q_0| r^{-2},\quad |\underline\a[\Lie_Z^{I}(F-\tilde F)]|\lesssim |\q_0|r^{-2}
\end{equation*}
for $|I|\leq k-1$. Denote $G=F-\tilde F$. For $i=1,2,3$, we have
\begin{equation*}
    \a[\Lie_Z^I G](\pab_i)=\Lie_Z^I G_{L\pab_i}=Z^I (G_{L\pab_i})-\sum_{\substack{|I_1|+|I_2|+|I_3|=|I|\\|I_1|<|I|}} (\Lie_Z^{I_1} G)(\Lie_Z^{I_2} L,\Lie_Z^{I_3} \pab_i).
\end{equation*}
Using the estimates above and the induction assumptions, we have for $|I|\leq k-1$ ($\slashed\pa=\pab_i$, $i=1,2,3$)
\begin{equation*}
\begin{split}
    |\a[\Lie_Z^I G]|&\lesssim \sum_{|J|\leq k-1}\tau_- r^{-1}|\Lie_Z^J G(L,\Lb)|+|\Lie_Z^J G(L,\slashed\pa)|+r^{-2}\tau_-^2 |\Lie_Z^J G(\Lb,\slashed\pa)|+r^{-1}\tau_- |\Lie_Z^J G(\slashed\pa,\slashed\pa)|\\
    &\lesssim |\q_0| \tau_- r^{-3}.
    \end{split}
\end{equation*}
Similarly,
\begin{equation*}
\begin{split}
    |\rho[\Lie_Z^I G]|&\lesssim |Z^I (G_{L\Lb})|+\sum_{|J|\leq k-1} |\Lie_Z^J G\left(O(1)(L,\slashed\pa)+O(r^{-1}\tau_-)\Lb,O(1)(\Lb,L,\slashed\pa)\right)|\\
    &\lesssim\sum_{|J|\leq k-1}|\Lie_Z^J G(L,\Lb)|+|\Lie_Z^J G(L,\slashed\pa)|+|\Lie_Z^J G(\Lb,\slashed\pa)|+|\Lie_Z^J G(\slashed\pa,\slashed\pa)|\\
    &\lesssim |\q_0| r^{-2},
    \end{split}
\end{equation*}
\begin{equation*}
\begin{split}
    |\sigma[\Lie_Z^I G]|&\lesssim \sum_{|J|\leq k-1} |\Lie_Z^J G\left(O(1)L+O(r^{-1}\langle\tau_-\rangle)\Lb+O(1)\slashed\pa,O(1)L+O(r^{-1}\langle\tau_-\rangle)\Lb+O(1)\slashed\pa\right)|\\
    &\lesssim\sum_{|J|\leq k-1}r^{-1}\langle\tau_-\rangle|\Lie_Z^J G(L,\Lb)|+|\Lie_Z^J G(L,\slashed\pa)|+r^{-1}\langle\tau_-\rangle|\Lie_Z^J G(\Lb,\slashed\pa)|+|\Lie_Z^J G(\slashed\pa,\slashed\pa)|\\
    &\lesssim |\q_0| \langle\tau_-\rangle r^{-3},
    \end{split}
\end{equation*}
\begin{equation*}
\begin{split}
    |\underline\a[\Lie_Z^I G]|&\lesssim \sum_{|J|\leq k-1} |\Lie_Z^J G\left(O(1)(\Lb,L,\slashed\pa),O(1)L+O(r^{-1}\langle\tau_-\rangle)\Lb+O(1)\slashed\pa\right)|\\
    &\lesssim\sum_{|J|\leq k-1}|\Lie_Z^J G(L,\Lb)|+|\Lie_Z^J G(L,\slashed\pa)|+|\Lie_Z^J G(\Lb,\slashed\pa)|+|\Lie_Z^J G(\slashed\pa,\slashed\pa)|\\
    &\lesssim |\q_0| r^{-2}.
    \end{split}
\end{equation*}
Therefore we prove the estimates for $k$, hence the estimates for arbitrary number of vector fields. As a result, we obtain the same bounds of the null cone flux for $F$.

\subsection{Bootstrap arguments}
In this subsection, we briefly review the bootstrap argument which establishes the existence in the interior. As is mentioned above, with the slight generalization to higher orders in previous section, the argument is now essentially the same (in fact a bit easier) as in \cite{FangWangYang}. We make no claim of originality on this part.

First we recall the exterior result \cite{KlainermanWangYang}. While the original result is stated with no greater than two order vector fields, it was pointed out, and can be also seen from above, that one can easily generalize it to higher orders. The initial value problem in gauge invariant form is formulated as follows: We consider the electric part $E_i=F_{0i}$ and magnetic part $H_i=\prescript{\star}{}{F_{0i}}$, where $\star$ is the Hodge dual operator. The initial data is the set $(E,H,\phi,\dot\phi_0)$ which prescribes the value of $E$, $H$, $\phi$ and $D_0\phi$, but the constraint equations need to be satisfied:
\begin{equation*}
    \div E=\Im(\phi_0\, \overline{\dot\phi_0}),\quad \div H=0.
\end{equation*}
We define the chargeless part of $E$ by $\tilde E_i=E_i-\q_0\chi_{ex}(r)r^{-1}\om_i$.
We now state the result, mainly the part that we need:

\begin{thm}[Exterior solution, \cite{KlainermanWangYang}]\label{exteriorthm}
Consider the Cauchy problem for (\ref{mMKG}) with the admissible initial data set $(E,H,\phi_0,\dot\phi_0)$ (satisfying constraint equations) at $\{t=2\}$. Then if for small $\varepsilon>0$ and some $1<\gamma_0<2$,
\begin{equation}
\label{ExteriorCondition}
\sum\limits_{|I| \leq N}\int_{r\geq 1}(1+r)^{\gamma_0+2|I|}(|\bar D\bar D^I\phi_0|^2+|\bar D^I\dot\phi_0|^2+|\bar D^I\phi_0|^2+|\bar\pa^I \tilde{E}|^2+|\bar\pa^I H|^2)dx\leq \varepsilon,
\end{equation}  
where $\bar D$ and $\bar \pa$ are projections of $D$ and $\pa$ to $\mathbb{R}^3$ respectively, then the unique local solution $(F, \phi)$ of the mMKG system can be extended to the whole exterior region $\{t-r\leq 1\}$ with the energy bounds
\begin{equation*}
q_+^{\gamma_0-1}\int_{C_q}r|D_L D_Z^I\phi|^2+|\slashed DD_Z^I\phi|^2+|D_Z^I\phi|^2+
\int_{C_q} r^{\gamma_0}|\a[\Lie_Z^I \tilde{F}]|^2+q_+^{\gamma_0} \int_{C_q} |\rho[\Lie_Z^I \tilde F|^2+|\sigma[\Lie_Z^I F]|^2\leq C\varepsilon
\end{equation*}
for $k\leq N$, where $C_q=\{r-t=q\}$ are outgoing null hypersurfaces. We also have the decay estimates of the scalar field
$|D_Z^I\phi| \leq C \varepsilon r^{-\frac 32}q_+^{-\frac{\gamma_0}2}$ 
for $k\leq N-2$.
\end{thm}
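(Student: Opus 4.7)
The plan is to combine the vector-field method in the exterior region with a careful decomposition into charge and chargeless parts, following the general scheme of Klainerman-Wang-Yang. First I would establish local existence by standard methods and set up a bootstrap on the weighted energy fluxes $\int_{C_q} r^{\gamma_0}|\a[\Lie_Z^I \tilde F]|^2 + q_+^{\gamma_0}\int_{C_q}(|\rho[\Lie_Z^I \tilde F]|^2+|\sigma[\Lie_Z^I F]|^2)$ for the chargeless Maxwell field and $\int_{C_q} r|D_L D_Z^I \phi|^2 + |\slashed D D_Z^I \phi|^2 + |D_Z^I \phi|^2$ for the complex scalar, for all $|I|\leq N$. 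The chargeless decomposition $\tilde F = F - \q_0 r^{-2}\chi_{\{t-r>2\}} dt\wedge dr$ is essential because the charge contribution is only $r^{-2}$-decaying and hence not square-integrable against the weights carried by the conformal multiplier.

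Second, I would commute the gauge-covariant wave equations with Killing fields $Z\in\mathcal{Z}$ using the schematic identity
\begin{equation*}
[\Box_A, D_Z^I]\phi = \sum_{|I_1|+|I_2|\leq |I|} Q(\Lie_Z^{I_1} F, D_Z^{I_2}\phi, Z) + \sum_{|I_3|+|I_4|+|I_5|<|I|} Z^\lambda \Lie_Z^{I_3} F_{\lambda\mu} \Lie_Z^{I_4} F^{\mu}{}_\nu Z^\nu D_Z^{I_5}\phi
\end{equation*}
derived in the preceding subsection, and similarly use the formula for $\Lie_Z^I J_\mu[\phi]$ on the Maxwell side. The crucial structural point is that the worst potential interaction $A^L \pa_L\phi$ involves only the good derivative $\pa_L = \pa_t + \pa_r$, so even if $A_L$ retains a charge-driven tail, it contracts with a derivative tangent to the light cone; similarly, the current $J_\mu[\phi]$ written in the null frame exhibits the null condition that makes the nonlinearity integrable against the weights $q_+^{\gamma_0-1}$ in the exterior. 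Pointwise decay $|D_Z^I\phi|\lesssim \varepsilon r^{-3/2} q_+^{-\gamma_0/2}$ for $|I|\leq N-2$ would then follow from a weighted Klainerman-Sobolev inequality on outgoing cones combined with the flux bound at the highest order.

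Third, to close the bootstrap, I would perform the energy estimate with a Morawetz-type multiplier ($\pa_t$ combined with $K_0 = (t-r)^2 L + (t+r)^2\Lb$ suitably truncated), feed in the decay estimates to control the bulk nonlinear terms, and use the null-cone flux comparison from the preceding subsection to transfer bounds from $\tilde F$ back to $F$ with an error $\lesssim |\q_0|^2\int_2^\infty r^{-2}\, dr < \infty$ at every order of vector fields. The contribution from the data is handled by the assumed weighted norm being $\leq \varepsilon$, and the constraint equations ensure the initial $E,H$ are consistent with the prescribed $\q_0$.

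The main obstacle is the long-range effect of the charge on the scalar field: because $A_\mu$ carries a $\q_0 r^{-1}$ tail, one cannot hope to close a standard energy estimate for $\phi$ alone, and must always work with the gauge-covariant derivatives $D_Z^I\phi$ and exploit the null structure $A^L \pa_L\phi$ to absorb the non-decaying part. A secondary difficulty is that the Klein-Gordon mass term breaks scale invariance, so the scaling vector field $S$ cannot be used on $\phi$; one has to make do with $\pa_\mu$, $\Omega_{ij}$, $\Omega_{0i}$ only, and the mass provides just enough extra control $\int |D_Z^I\phi|^2$ in the flux to compensate.
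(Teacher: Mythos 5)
The paper does not reprove Theorem~\ref{exteriorthm}: it is imported directly from Klainerman--Wang--Yang \cite{KlainermanWangYang}, and the only genuine addition the paper makes is to observe that the cited result (stated there for up to two vector fields) extends to $|I|\leq N$ with $N\geq 5$, which the paper justifies by the higher-order commutator identity for $[\Box_A, D_Z^I]\phi$ and for $\Lie_Z^I J_\mu[\phi]$ derived in Section~3.1 and the higher-order null-cone flux comparison between $F$ and the chargeless $\tilde F$ in Section~3.2. So there is no ``paper proof'' in the usual sense to compare against; the relevant novelty is only the commutation/flux bookkeeping, not a reproof of the exterior estimate.

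Your sketch is a reasonable outline of the actual KWY argument: the chargeless decomposition $\tilde F = F - \q_0 r^{-2}\chi_{\{t-r>2\}}\,dt\wedge dr$, the bootstrap on the $r^{\gamma_0}$- and $q_+^{\gamma_0}$-weighted cone fluxes, the null structure of $J_\mu[\phi]$ and of $A^L\pa_L\phi$, and the use of the mass term as a source of $\int|D_Z^I\phi|^2$-control are all essential features of that proof, and you correctly emphasize that one must work with gauge-covariant derivatives because of the long-range $\q_0 r^{-1}$ tail in $A_\mu$. One technical remark: the multiplier you propose, $K_0 = (t+r)^2L + (t-r)^2\Lb$, naturally produces $(t+r)^2\approx r^2$ weights on the good $L$-flux, but the flux in the statement carries fractional weights $r^{\gamma_0}$ and $q_+^{\gamma_0}$ with $1<\gamma_0<2$; the exterior proof in \cite{KlainermanWangYang} is built on $r^p$-weighted multipliers in the spirit of Dafermos--Rodnianski (with $p=\gamma_0$), not the full conformal vector field, and this is what gives the fractional powers and avoids borderline logarithmic losses at $r^2$ weight. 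Finally, keep in mind that a genuine reproof at this level of detail would be a substantial undertaking (it occupies an entire paper in \cite{KlainermanWangYang}); the intent of Section~3 of this paper is only to supply the higher-order commutator and charge-flux inputs that allow the cited result to be applied with $N\geq 5$ vector fields.
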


\vspace{2ex}

Here we let $N\geq 5$. This in particular implies the bound of the energy flux on the cone $\{t-r=1\}$, thanks to the estimates in last subsection:
\begin{equation}
    C_k[F,\phi]=\int_{t-r=1} |(\rho,\sigma)[\Lie_Z^I F]|^2+r|\a[\Lie_Z^I F]|^2+|D_L D_Z^I\phi|^2+|\slashed D D_Z^I\phi|^2+|D_Z^I\phi|^2 \leq C\varepsilon.
\end{equation}
Now we use $E^H_{S}(F,\tau)$ and $E^H_{KG}(\phi,\tau)$ to denote the energy of the Maxwell field and the scalar field on truncated hyperboloids $\hat H_\tau$, induced by the multiplier $S$ and $\pa_t$ respectively. We also define $E^H_{S,k}=\sum_{|I|\leq k}E^H_S(\Lie_Z^I F,\tau)$, $E^H_{KG,k}=\sum_{|I|\leq k}E^H_{KG}(D_Z^I\phi,\tau)$. We have the comparability
\begin{equation*}
    E^H_{S}(\tau)\approx \sum_{|I|\leq k}\int_{\hat H_\tau} \frac{\tau^2}t|\Lie_Z^I F^H|^2 dx,\quad E_{KG}^H(\tau)\approx \sum_{|I|\leq k}\int_{\hat H_\tau} (\frac \tau t)^2 |D_{\pa_t}\phi|^2+(\frac \tau t)^2|D_R\phi|^2+|\slashed D\phi|^2+|\phi|^2 dx,
\end{equation*}
where $R=\tau^{-1}\Om_{0r}$.
We set up the following bootstrap assumptions:
\begin{equation*}
    E_{S,k}^H(\tau)^\frac 12\leq C_b \varepsilon,\quad E_{KG,k}^H(\tau)^\frac 12\leq C_b \varepsilon\, \tau^\delta,\quad k\leq N.
\end{equation*}
We do not include $C_b$ in the implicit constants from ``$\lesssim$".


Using the Klainerman-Sobolev type estimate on hyperboloids, we immediately get the decay estimate of the scalar field:
\begin{equation*}
    \frac{\tau}t|DD_Z^I\phi|+|D_Z^I\phi|\lesssim C_b\varepsilon t^{-\frac 32}\tau^\delta,\quad |I|\leq N-2.
\end{equation*}
To obtain the decay of components of the Maxwell field one can also decompose the tensor to electric and magnetic parts under the hyperboloidal frame as in \cite{FangWangYang} and then apply the Sobolev estimate to get
\begin{equation}
    |(\Lie_Z^I F)^H|\lesssim C_b\varepsilon t^{-1}\tau^{-1}, \quad |I|\leq N-2.
\end{equation}

\subsubsection{The Maxwell field}
For the Maxwell field, we have no commutator terms, so we have
\begin{equation*}
    \begin{split}
        E^H_{S,k}(\tau)\lesssim E^H_{S,k}(\tau_0)+C_k(\tau)+\sum_{|I|\leq k}\left|\iint_{D_{\tau_0}^\tau} S^\mu \Lie_Z^I F_{\mu\nu} \Lie_Z^I J^\nu[\phi]\right|
    \end{split}
\end{equation*}
where $D_{\tau_0}^\tau$ is the region enclosed by $\hat H_{\tau_0}$, $\hat H_\tau$, and part of the cone $\{t-r=1\}$ between the two hyperboloids, denoted by $C_{\tau_0}^\tau$. The last integral is bounded by the sum of terms like ($|I|=k$)
\begin{equation*}
    \sum_{|I_1|+|I_2|\leq |I|}\left|\iint_{D_{\tau_0}^\tau} S^\mu \Lie_Z^{I} F_{\mu\nu} \Im (D_Z^{I_1}\phi\c\overline{D^\nu D_Z^{I_2}\phi})\right|+\sum_{|I_3|+|I_4|+|I_5|<|I|} \left|\iint_{D_{\tau_0}^\tau} S^\mu \Lie_Z^{I}\Im(D_Z^{I_3}\phi\c\overline{i\Lie_Z^{I_4} F_{Z\mu} D_Z^{I_5}\phi})\right|
\end{equation*}
The second here contains an additional decay factor so we omit it. For the first term, when $|I_2|<|I|$, we can use the following estimate
\begin{equation*}
    \left|\iint_{D_{\tau_0}^\tau} S^\mu \Lie_Z^{I} F_{\mu\nu} \Im (D_Z^{I_1}\phi\c\overline{D^\nu D_Z^{I_2}\phi})\right|\leq \iint_{D_{\tau_0}^\tau} \tau |\Lie_Z^I F^H| |D_Z^{I_1}\phi|\c\tau^{-1}|D_\Om D_Z^{I_2}\phi|
\end{equation*}
since one component of $\Lie_Z^I F$ is $S$. Here we used the relation $|\slashed D\phi|+|D_R\phi|\lesssim \tau^{-1}|D_{\Om_{0i}}\phi|$, which can be seen from the relation $t\slashed\pa_i=\Om_{0i}-\om_i\Om_{0r}$. Note that here the volume element is $dvol=\frac{\tau}t dx d\tau$. The bootstrap assumption says
\begin{equation*}
    \int_{\hat H_\tau} \frac{\tau^2}t|(\Lie_Z^I F)^H|^2 dx\lesssim (C_b\varepsilon)^2,\quad |I|\leq k.
\end{equation*}
Therefore the integral can be controlled by
\begin{equation*}
    \sum_{|I_1|+|I_2|\leq |I|} \int_{\tau_0}^\tau \left(\int_{\hat H_\tau} \frac{\tau^2}t |\Lie_Z^I F^H|^2 dx\right)^\frac 12 \left(\int_{\hat H_\tau} \tau^{-1} |D_Z^{I_1}\phi|^2 |D_\Om D_Z^{I_2}\phi|^2 dx\right)^\frac 12 d\tau
\end{equation*}
One of the scalar field factor can be estimated by the decay, so e.g. we get
\begin{equation*}
    \int_{\hat H_\tau} C_b\varepsilon t^{-\frac 32}\tau^{-\frac 12+\delta} \left(\int_{\hat H_\tau} \frac{\tau^2}t |\Lie_Z^I F^H|^2 dx\right)^\frac 12 \left(\int_{\hat H_\tau}  |D_\Om D_Z^{I_2}\phi|^2 dx\right)^\frac 12\leq C_b^3 \varepsilon^3.
\end{equation*}

When $|I_2|=|I|$, the integration by part argument in \cite{FangWangYang} plays a fundamental role. Since $\pa^\mu S^\nu=m^{\mu\nu}$, we have
\begin{equation*}
    \begin{split}
        S^\nu \Lie_Z^I F_{\mu\nu} \Im &(\phi\c\overline{D^\mu D_Z^I\phi})\\
        &=\pa^\mu (S^\nu \Lie_Z^I F_{\mu\nu} \Im(\phi\c\overline{D_Z^I \phi}))-S^\nu (\pa^\mu \Lie_Z^I F_{\mu\nu}) \Im(\phi\c\overline{D_Z^I\phi})-S^\nu \Lie_Z^I F_{\mu\nu} \Im(D^\mu\phi\c\overline{D_Z^I \phi}).
    \end{split}
\end{equation*}
Notice that $\pa^\mu\Lie_Z^I F_{\mu\nu}=-\Lie_Z^I J[\phi]_\nu$. This is quadratic, so we obtain an extra decay power. The last term can be treated similarly as above. Therefore it suffices to control the boundary flux
\begin{equation*}
    \left|\int_{\hat H_\tau} \tau^{-1} S^\mu S^\nu \Lie_Z^I F_{\mu\nu} \Im(\phi\c\overline{D_Z^I\phi})\right|+\left|\int_{C_{\tau_0}^\tau} L^\mu S^\nu \Lie_Z^I F_{\mu\nu} \Im(\phi\c\overline{D_Z^I\phi})\right|+\left|\int_{\hat H_{\tau_0}} \tau_0^{-1} S^\mu S^\nu \Lie_Z^I F_{\mu\nu} \Im(\phi\c\overline{D_Z^I\phi})\right|.
\end{equation*}
Using the anti-symmetry of $\Lie_Z^I F$, we see that the first and last term are zero. Also, using $S=\frac 12(\tau_+L+\tau_-\Lb)$, the decay (boundedness) of $\phi$, and the bounds on the cone flux we get
\begin{equation*}
    |\int_{C_{\tau_0}^\tau} L^\mu S^\nu \Lie_Z^I F_{\mu\nu} \Im(\phi\c\overline{D_Z^I\phi})|\lesssim |\int_{C_{\tau_0}^\tau} \rho[\Lie_Z^I F] \Im(\phi\c\overline{D_Z^I\phi})|\lesssim C_b^3\varepsilon^3,
\end{equation*}
Therefore we get $E_{S,k}^H(\tau)\lesssim \varepsilon^2+C_b^3 \varepsilon^3$, so making $C_b$ relatively big (compared with the implicit constants) and then $\varepsilon$ small we improve the boostrap assumption on $E_{S,k}^H(\tau)$.

\subsubsection{The scalar field}
For the scalar field, the right hand sides are commutators, which leads to the integral in the estimate of $E_{KG,k}^H(\tau)$:
\begin{equation*}
    \sum_{|I_1|+|I_2|<|I|}\iint_{D_{\tau_0}^\tau} |D_0 D_Z^I\phi||Q(\Lie_Z^{I_1}F,D_Z^{I_2}\phi,Z)|+\sum_{|I_3|+|I_4|+|I_5|<|I|}\iint_{D_{\tau_0}^\tau} |D_0 D_Z^I\phi||\Lie_Z^{I_3} F_{Z\mu} \Lie_Z^{I_4} F^\mu_{\, \, Z} D_Z^{I_5}\phi|.
\end{equation*}
The second term behaves better so we omit it.
Since $\Om_{0i}=\om_i\Om_{0r}+t\pab_i$, the main contribution of the trilinear form $Q$ can be estimated as
\begin{equation*}
    |Z^\nu \Lie_Z^{I_1} F_{\mu\nu} D^\mu D_Z^{I_2}\phi|\lesssim t|\Lie_Z^{I_1} F^H|(|\slashed DD_Z^{I_2}\phi|+|D_{R}D_Z^{I_2}\phi|+|D_{\pa_\rho}D_Z^{I_2}\phi|)
\end{equation*}
We have
\begin{equation*}
    |\slashed D\phi|+|D_R\phi|\lesssim \tau^{-1}|D_{\Om_{0i}}\phi|,\quad |D_{\pa_\tau}\phi|\lesssim \frac \tau t |D_{\pa_t}\phi|+\tau^{-1}|D_{\Om_{0i}}\phi|,
\end{equation*}
so the main term becomes $\tau |\Lie_Z^{I_1} F^H| |D_{\pa_t}\phi|$. We can then estimate
\begin{equation*}
\begin{split}
    \iint_{D_{\tau_0}^\tau} |D_0 D_Z^I\phi|&|Q(\Lie_Z^{I_1}F,D_Z^{I_2}\phi,Z)|\frac\tau t dx d\tau \leq \iint_{D_{\tau_0}^\tau} \frac \tau t \c\tau|D_0 D_Z^I \phi||\Lie_Z^{I_1} F^H||D D_Z^{I_2}\phi| dx d\tau\\
    &\leq \int_{\tau_0}^\tau s\left(\int_{\hat H_s} (\frac s t |D_0 D_Z^I \phi|)^2 dx\right)^\frac 12 \left(\int_{\hat H_s} |\Lie_Z^{I_1} F^H|^2 |D D_Z^{I_2}\phi|^2 dx\right)^\frac 12 ds\\
    &\leq \int_{\tau_0}^\tau s^{1+\delta}\c C_b\varepsilon \left(\int_{\hat H_s} C_b^2 \varepsilon^2 t^{-2} \tau^{-2} |D D_Z^{I_2}\phi|^2+(C_b\varepsilon t^{-\frac 12}\tau^{-1+\delta})^2 |\Lie_Z^{I_1} F^H|^2 dx\right)^\frac 12 ds\\
    &\leq \int_{\tau_0}^\tau C_b^2\varepsilon^2 s^{-1+\delta}\left(\int_{\hat H_s} t^{-2} s^{2} |D D_Z^{I_2}\phi|^2 dx+\int_{\hat H_s} s^{2\delta} \frac{s^2}{t}|\Lie_Z^{I_1} F^H|^2 dx \right)^\frac 12 ds\\
    &\leq \int_{\tau_0}^\tau C_b^3 \varepsilon^3 s^{-1+2\delta}\, ds \lesssim C_b^3 \varepsilon^3 \tau^{2\delta}.
\end{split}
\end{equation*}
Again, letting $C_b$ be relatively big and then $\varepsilon$ be small we can improve the bootstrap assumption on $E_{k,KG}^H(\tau)$. Therefore we have improved both assumptions and hence obtain the global existence, with global decay estimates.

\section{Asymptotics}
\subsection{Initial data}
Under the Lorenz gauge, the initial data needs to satisfy the gauge condition as well as the constraint equation, which comes from the Maxwell equation:
\begin{equation*}
    \pa^\mu A_\mu=0,\quad -\triangle A_0=J_0-\pa^i(\pa_t A_i),
\end{equation*}
where $J_0=\Im(\phi\, \overline{D_0\phi})=\Im(\phi\, \overline{\pa_t\phi})-|\phi|^2 A_0$.
Therefore the initial data set can be set to be $(A_i,\pa_t A_i,\phi,\pa_t\phi)|_{t=2}=(a_i,\dot{a}_i,\phi_0,\dot\phi_0)$ with $i=1,2,3$. The remaining components $(a_0,\dot a_0):=(A_0,\pa_t A_0)|_{t=2}$ can then be determined by the conditions above.

Define the norm
\begin{equation*}
    ||\psi||^2_{H^{k,\gamma_0}}:=\sum_{|I|\leq k} \int_{\mathbb{R}^3} (1+r)^{\gamma_0+2|I|} |\pa_x^I \psi|^2 dx.
\end{equation*}
We impose the initial data set $(a_i,\dot a_i,\phi_0,\dot\phi_0)$ satisfying the bound with $\gamma_0>1$:
\begin{equation}
    ||a_i||_{H^{k+1,\gamma_0-2}}+||\dot a_i||_{H^{k,\gamma_0}}+||(\phi_0,\pa_x \phi_0,\phi_1)||_{H^{k,\gamma_0}}\leq \varepsilon.
\end{equation}

We want to verify the initial energy condition \eqref{ExteriorCondition} to obtain the existence. This requires also the information of $A_0$ initially. Since $\q_0=\frac 1{4\pi}\int_{\mathbb{R}^3} J_0$ is generally nonzero, we expect a $r^{-1}$ tail of $A_0$ since it satisfies an elliptic equation. Define
\begin{equation*}
    \tilde a_0=a_0-\q_0 \chi_{ex}(r) r^{-1},
\end{equation*}
where $\chi_{ex}$ is a non-decreasing smooth cutoff function with $\chi_{ex}(s)=0$ when $s\leq -1$, and $\chi_{ex}(s)=1$ when $s\geq -1/2$.
Then at the initial slice $E_i=F_{0i}=\pa_t a_i-\pa_i a_0=\pa_t a_i-\pa_i \tilde a_0+\q_0 \chi_{ex}(r) r^{-2}\om_i-\q_0 \chi_{ex}'(r)\om_i r^{-1}$. The last term vanishes when $r\geq 1$. Therefore $\tilde E_i=E_i-\q_0 \chi_{ex}(r) r^{-2} \om_i$ coincides with the previous definition when $r\geq 1$. (To construct the exterior solution, we only need to care about the part where $r\geq 1$.)

One can establish the estimate of $\tilde a_0$ using elliptic estimates, see details in \cite{CKL}:
\begin{lem}
    If $||a_i||_{H^{k+1,\gamma_0-2}}+||\dot a_i||_{H^{k,\gamma_0}}+||(\phi_0,\pa_x \phi_0,\dot\phi_0)||_{H^{k,\gamma_0}}\leq \varepsilon$ for some $\gamma_0\in (1,3)$, then for any $\gamma_0'<\gamma_0$ we have
    \begin{equation*}
        ||a_0-\q_0 \chi_{ex}(r) r^{-1}||_{H^{k+1,\gamma_0'-2}}+||\dot a_0||_{H^{k,\gamma_0'}}\lesssim \varepsilon.
    \end{equation*}
\end{lem}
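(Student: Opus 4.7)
The plan is to treat the two pieces of the lemma separately. For $\dot a_0$, evaluating the Lorenz gauge condition $\pa^\mu A_\mu=0$ at $t=2$ gives $\dot a_0=\pa^i a_i$. By the definition of the weighted norm, the $|I|\geq 1$ terms in $\|a_i\|_{H^{k+1,\gamma_0-2}}$ are exactly $\|\pa a_i\|_{H^{k,\gamma_0}}$, so
\[
\|\dot a_0\|_{H^{k,\gamma_0'}}\leq\|\dot a_0\|_{H^{k,\gamma_0}}\lesssim\|a_i\|_{H^{k+1,\gamma_0-2}}\leq\varepsilon.
\]
This half is immediate and in fact gains a little compared to the claim, since $\gamma_0'<\gamma_0$.

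For $\tilde a_0:=a_0-\q_0\chi_{ex}(r)r^{-1}$, I would start from the constraint $-\triangle a_0=J_0-\pa^i\dot a_i$ with $J_0=\Im(\phi_0\,\overline{\dot\phi_0})-|\phi_0|^2 a_0$. Substituting $a_0=\tilde a_0+\q_0\chi_{ex}(r)r^{-1}$ yields the elliptic equation
\[
(-\triangle+|\phi_0|^2)\tilde a_0 \;=\; \Im(\phi_0\,\overline{\dot\phi_0})-\pa^i\dot a_i+\q_0\triangle\!\bigl(\chi_{ex}(r)r^{-1}\bigr)-\q_0|\phi_0|^2\chi_{ex}(r)r^{-1},
\]
where $\triangle(\chi_{ex}(r)r^{-1})$ is smooth and supported in the compact transition region of $\chi_{ex}$. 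The crucial identity is that, modulo the small nonlinear term $|\phi_0|^2\tilde a_0$ on the left, the integral of the right-hand side over $\mathbb{R}^3$ vanishes, since the defining relation $4\pi\q_0=\int J_0\,dx$ exactly cancels $\q_0\int\triangle(\chi_{ex}(r)r^{-1})\,dx=-4\pi\q_0$ (the latter computed from the flux at spatial infinity). Geometrically, the monopole of $a_0$ has been absorbed into the $\q_0\chi_{ex}/r$ piece, forcing $\tilde a_0$ to decay faster than $1/r$.

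I would then commute the equation with $\pa_x^I$ for $|I|\leq k+1$ and apply weighted $L^2$ elliptic estimates for $-\triangle$ on $\mathbb{R}^3$: for $g\in H^{0,\gamma_0'}$ with vanishing integral, the Newton-potential gain of two powers of $r$ yields a bound of the schematic form $\|u\|_{H^{1,\gamma_0'-2}}\lesssim\|g\|_{H^{0,\gamma_0'}}$. The restriction $\gamma_0'<\gamma_0$, with both strictly below $3$, is exactly what prevents the $1/r$ fundamental solution from saturating these weighted spaces, and is the standard unavoidable borderline loss. The nonlinear term $|\phi_0|^2\tilde a_0$ on the left is treated perturbatively using $\|\phi_0\|_{L^\infty}\lesssim\varepsilon$ from Sobolev embedding and absorbed, or handled by a Banach contraction in the weighted Sobolev norm. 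The main obstacle will be the careful bookkeeping of the vanishing-mean condition through the $|\phi_0|^2 a_0$ contribution hidden in $J_0$: one has to rearrange so that the full monopole really ends up in the $\q_0\chi_{ex}/r$ piece before the elliptic estimate is invoked. Once this is set up cleanly, the argument proceeds as in the analogous step of \cite{CKL}.
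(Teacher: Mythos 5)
Your outline is sound and follows the standard elliptic-decomposition argument that the paper invokes by referring to \cite{CKL}. The $\dot a_0$ half is exactly right: from the Lorenz gauge condition at $t=2$ one gets $\dot a_0 = \pa^i a_i$, and the weight bookkeeping you carry out ($(1+r)^{\gamma_0-2+2|I|}$ with $|I|\ge 1$ regrouped as $(1+r)^{\gamma_0+2(|I|-1)}$) gives $\|\dot a_0\|_{H^{k,\gamma_0}}\lesssim\|a_i\|_{H^{k+1,\gamma_0-2}}$, which is stronger than the stated $H^{k,\gamma_0'}$ bound.

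For the $\tilde a_0$ half the plan is also correct, but let me point out the precise state of the vanishing-mean identity, which your phrasing slightly misstates before you flag the issue. Since $J_0=\Im(\phi_0\overline{\dot\phi_0})-|\phi_0|^2 a_0$ and $4\pi\q_0=\int J_0\,dx$, we have $\int\Im(\phi_0\overline{\dot\phi_0})\,dx=4\pi\q_0+\int|\phi_0|^2 a_0\,dx$, so the integral of the explicit right-hand side you wrote is not zero; it equals $\int|\phi_0|^2\tilde a_0\,dx$. What \emph{does} vanish identically is $\int(-\triangle\tilde a_0)\,dx$ once the potential term $|\phi_0|^2\tilde a_0$ is carried to the right along with the other sources — precisely the cancellation you need to kill the $r^{-1}$ tail. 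This self-consistency (the mean condition involves $\tilde a_0$ itself, and $\q_0$ is defined via $a_0$) is what the fixed-point argument must preserve step by step; you do note this as ``the main obstacle,'' so the outline is honest, just not closed. With that bookkeeping in place, the commutation with $\pa_x^I$ for $|I|\le k+1$ plus the weighted $L^2$ elliptic estimate (Newton potential gains two derivatives and, with the mean condition, the extra power of $r$; the $\gamma_0'<\gamma_0$ loss is the borderline loss in these weighted spaces) and absorption of the $O(\varepsilon^2)$ potential term yields the claimed bound. This is the same approach as the paper's (which cites \cite{CKL} for details), so no genuine divergence.
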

To verify the condition of the exterior theorem (with $\gamma_0$ replaced by $\gamma_0'$), we need to check the norm of $\tilde E$, $H$, $\phi_0$ and $\dot\phi_0$ on $\{t=2,r\geq 1\}$. When $r\geq 1$ we have $\tilde E_i=\dot a_i-\pa_i \tilde a_0$, so we can obtain the bound of $\tilde E_i$ using what we have for $\tilde a_0$. For the scalar field it suffices to replace partial derivatives by the covariant derivatives, which is because of the bounds of $A_\mu$ and Sobolev estimates (this is fairly easy and we leave it to the next subsection). Therefore the data satisfies the initial condition and we have a global solution with estimates in the previous section.

\subsection{Decay estimates}
Recall that under the Lorenz gauge we have
\begin{equation*}
    -\Box A_\mu=J[\phi]_\mu=\Im (\phi\overline{D_\mu\phi}).
\end{equation*}
Similar to above we consider the part with the charge $\q_0 \chi_{ex}(r-t) r^{-1}\delta_{0\mu}$. Notice that this is an exact solution to the homogeneous wave equation. Therefore we have for $\tilde A_\mu=A_\mu-\q_0 \chi_{ex}(r-t) r^{-1}\delta_{0\mu}$ that 
\begin{equation*}
    -\Box \tilde A_\mu=\Im(\phi\overline{D_\mu \phi})
\end{equation*}
with initial data $(a_0,\dot a_0)$.
From the existence part we have $|D_Z^I\phi|\lesssim \varepsilon(1+t+r)^{-\frac 32}\langle \tau\rangle^\delta q_+^{-\frac{\gamma_0'}2}$ (set $\tau=1$ in the exterior), so in particular we have
\begin{equation*}
    |\phi|+|D\phi|\lesssim \varepsilon t^{-\frac 32}\tau^{2\delta} q_+^{-\frac{\gamma_0'}2}\lesssim \varepsilon t^{-3+2\delta}\mathbf{1}_{t\geq r-1}+\varepsilon r^{-\frac 32}  q_+^{-{\gamma_0'}}\mathbf{1}_{t<r-1}.
\end{equation*}
Here $\mathbf 1$ is the indicator function.
This gives an estimate of the right hand side of the wave equations. By linearity, we can analyze the effect of the inhomogeneous term and the initial data separately. For the inhomogeneous terms above, if we consider a wave equations with them on the right hand side with vanishing initial data, then the solution (denoted by $\tilde A_\mu^{source}$) has the decay
\begin{equation}
    |\tilde A_\mu^{source}|\lesssim \varepsilon^2 (1+t+r)^{-1+\delta} (1+q_+)^{-{\gamma_0'}}.
\end{equation}
The estimate is standard, and here one can treat the interior and exterior sources separately. See \cite[Appendix]{CKL} for the radial estimate method and \cite{LeFlochMamodel} for alternative proof using the representation formula.


For the initial data, from the initial bound of $A_\mu$ and the following Sobolev estimate we can derive the initial decay.
\begin{lem}
    Let $\psi$ be a function on $\mathbb{R}^3$. We have
    \begin{equation*}
        \sup_x\, \langle r\rangle^{\frac{\gamma_0'+1}2}|\psi|\leq C\sum_{|\b|+k\leq 3} \left(\int_{\mathbb{R}^3} \langle r\rangle^{\gamma_0'-2} |(\langle r\rangle \pa_r)^k \pa_\om^\b \psi|^2 dx\right)^\frac 12.
    \end{equation*}
\end{lem}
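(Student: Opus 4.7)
The plan is to prove this weighted Klainerman--Sobolev type estimate by combining the standard Sobolev embedding $H^2(\mathbb{S}^2)\hookrightarrow L^\infty(\mathbb{S}^2)$ on the unit sphere with a weighted radial fundamental-theorem-of-calculus argument, splitting into the regions $r\geq 1$ and $r\leq 1$.

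First, I would fix $r$ and apply Sobolev on $\mathbb{S}^2$ to reduce the pointwise control to an $L^2$ norm of up to two angular derivatives:
\[
\sup_\om |\psi(r,\om)|^2 \leq C \sum_{|\b|\leq 2} \int_{\mathbb{S}^2} |\pa_\om^\b \psi(r,\om)|^2 \, d\om =: G(r).
\]
This costs $2$ of the $3$ derivatives in the budget, leaving one free for the radial direction. For the exterior region $r\geq 1$, I would apply FTC to $\langle s\rangle^{\gamma_0'+1}G(s)$, using that the weighted RHS bound forces $G(s)\to 0$ as $s\to\infty$:
\[
\langle r\rangle^{\gamma_0'+1}G(r) = -\int_r^\infty \pa_s\bigl(\langle s\rangle^{\gamma_0'+1}G(s)\bigr)\,ds,
\]
and Cauchy--Schwarz in the mixed $|\pa_\om^\b\psi||\pa_r\pa_\om^\b\psi|$ term then produces a product of weighted integrals that match the RHS, since for $r\geq 1$ one has $\langle r\rangle^{\gamma_0'-2} r^2 \sim r^{\gamma_0'}$ and $(\langle r\rangle\pa_r)^k \sim r^k\pa_r^k$, so the resulting norms fall into the allowed range $k\in\{0,1\}$, $|\b|\leq 2$.

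For the interior $r\leq 1$, the weight $\langle r\rangle^{(\gamma_0'+1)/2}$ is bounded, so the task reduces to bounding $\sup_{|x|\leq 1}|\psi|$ by the RHS. I would apply the standard embedding $H^2(B_2)\hookrightarrow L^\infty(B_1)$ and convert Cartesian derivatives via the identity $\pa_{x_j}=\om_j\pa_r+r^{-1}\sum_i\om_i\Omega_{ij}$; the resulting $r^{-1}$ factors attached to angular pieces are absorbed using the $3$D Hardy inequality $\int_{\mathbb{R}^3}|\phi|^2/r^2\,dx \lesssim \int_{\mathbb{R}^3}|\nabla\phi|^2\,dx$ applied to $\phi=\Omega^\b\psi$, which converts each $r^{-1}$ singularity into one extra derivative. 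Since the 3D Sobolev step uses $|\alpha|\leq 2$ Cartesian derivatives and the Hardy trade-up adds at most one more, the total stays within $k+|\b|\leq 3$. The main obstacle is precisely this near-origin analysis: the spherical measure $r^2\,dr\,d\om$ degrades the angular control compared to what a standard $H^2$ norm would require, and closing the estimate requires careful bookkeeping of Hardy-type exchanges (exploiting the vanishing of $\Omega^\b\psi$ to order $r^{|\b|}$ at the origin for smooth $\psi$) to stay within the derivative budget.
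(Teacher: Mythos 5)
Your argument for $r \geq 1$ is the standard one and correct: the spherical Sobolev embedding $H^2(\mathbb{S}^2)\hookrightarrow L^\infty$ reduces the angular sup to $G(r)=\sum_{|\b|\le2}\int_{\mathbb{S}^2}|\pa_\om^\b\psi|^2d\om$, a weighted radial FTC gives $\langle r\rangle^{\gamma_0'+1}G(r)=-\int_r^\infty\pa_s(\langle s\rangle^{\gamma_0'+1}G(s))\,ds$, and Cauchy--Schwarz in the cross term $\int|\pa_\om^\b\psi|\,|\pa_r\pa_\om^\b\psi|$ lands on the allowed $(k,|\b|)\in\{0,1\}\times\{0,1,2\}$ range with exactly the right weights. (The paper itself states this lemma without proof, so there is no written argument to compare your proposal against, but this half is fine.)

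The interior half ($r\le 1$) has a real gap, not a matter of ``careful bookkeeping.'' The assertion that ``the $3$D Sobolev step uses $|\alpha|\le 2$ Cartesian derivatives and the Hardy trade-up adds at most one more'' is not correct, because the trade does not terminate within the stated budget. Using $\pa_j=\om_j\pa_r+\tfrac{1}{r}\om_i\Om_{ij}$, a $|\alpha|=2$ term generates, among others, $r^{-2}\Om^2\psi$: a double pole on a second-order angular derivative. To control $\int_{B_1}r^{-4}|\Om^2\psi|^2\,dx$ one must apply Hardy twice, and each application of the $\nabla$-Hardy $\int r^{-2}|\phi|^2\lesssim\int|\nabla\phi|^2$ regenerates a $r^{-1}$ singularity, since the tangential part of $\nabla(\Om^\b\psi)$ is again $r^{-1}\Om^{\b+1}\psi$. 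Tracing this out (or equivalently iterating the radial Hardy $\int r^{-2}|u|^2\,dx\lesssim\int|\pa_r u|^2\,dx$ twice) unavoidably produces either $\pa_r^2\Om^2\psi$ or $\pa_r\Om^3\psi$ in the estimate, both at order $k+|\b|=4$, one beyond the allowed range. The pointwise vanishing $\Om^\b\psi(0)=0$ guarantees that the singular integrals are finite, but it does not reduce the number of derivatives needed in the $L^2$ bound (the $p=-2$ one-dimensional Hardy still converts $r^{-2}\Om^2\psi$ into $\pa_r$ of $r^{-1}\Om^2\psi$, and a second Hardy is then forced). So, as written, the interior estimate does not close, and one needs a genuinely different device near $r=0$ --- for example exploiting the exact cancellation between $\tfrac{2}{r}\pa_r\psi$ and $\tfrac{1}{r^2}\triangle_\om\psi$ inside $\triangle\psi$ rather than estimating the singular pieces separately, or an even-extension/reflection across $r=0$ --- which you have not supplied.
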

Then applying this to the zeroth and first order derivative of $\tilde A_\mu$ at the initial slice, we get $|\tilde A_\mu||_{t=2}\lesssim \varepsilon (1+r)^{-\frac{\gamma_0'+1}2}$, and $|\pa \tilde A_\mu||_{t=2}\lesssim \varepsilon (1+r)^{-\frac{\gamma_0'+3}2}$. Now we need the following estimate, which is also standard:
\begin{lem}
    If $w$ is the solution of the homogeneous wave equation $-\Box w=0$ with $(w,\pa_t w)|_{t=0}=(w_0,w_1)$, then for any $\alpha\in (0,1)$ we have
    \begin{equation}
        (1+t+r)(1+|t-r|)^\a |w(t,x)|\leq C\sup_x \left((1+|x|)^{2+\a} (|w_1(x)|+|\pa w_0(x)|)+(1+|x|)^{1+\a} |w_0(x)|\right).
    \end{equation}
\end{lem}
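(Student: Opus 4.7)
The plan is to apply Kirchhoff's representation formula in $1+3$ dimensions and reduce each resulting spherical mean to a one-dimensional radial integral that admits sharp elementary bounds. I will start from
\[
w(t,x)=\frac{1}{4\pi}\int_{S^2}w_0(x+t\om)\,d\om+\frac{t}{4\pi}\int_{S^2}\bigl(\om\c\nab w_0(x+t\om)+w_1(x+t\om)\bigr)\,d\om,
\]
which splits $w$ into three spherical means. The first has no $t$ prefactor and will be controlled by $M_0:=\sup_x(1+|x|)^{1+\a}|w_0(x)|$, whereas the other two carry a factor $t$ and will be controlled by $M_1:=\sup_x(1+|x|)^{2+\a}(|w_1|+|\pa w_0|)$; this matches the two distinct weights appearing in the hypothesis.

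For each spherical mean I will parametrize the sphere $\{y=x+t\om\}$ by $\rho=|y|\in[|t-r|,t+r]$. The identity $\rho\,d\rho=-rt\sin\theta\,d\theta$ (with $\theta$ the angle between $\om$ and $\om_x$) gives the standard reduction
\[
\int_{S^2}f(x+t\om)\,d\om=\frac{2\pi}{rt}\int_{|t-r|}^{t+r}\bar f(\rho)\,\rho\,d\rho,
\]
where $\bar f(\rho)$ is the azimuthal mean on the latitude $|y|=\rho$. Inserting the pointwise bounds reduces everything to estimating $(rt)^{-1}\int_{|t-r|}^{t+r}\rho(1+\rho)^{-\b}\,d\rho$ for $\b\in\{1+\a,\,2+\a\}$. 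Two elementary estimates will be used: from $\rho(1+\rho)^{-\b}\le(1+\rho)^{1-\b}$ together with the mean value theorem, $\int_{|t-r|}^{t+r}(1+\rho)^{1-\b}\,d\rho\le 2\min(t,r)(1+|t-r|)^{1-\b}$; and from direct integration, $\int_{|t-r|}^{t+r}(1+\rho)^{-1-\a}\,d\rho\le\a^{-1}(1+|t-r|)^{-\a}$.

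For the $M_0$-piece I will use the first bound and note that $\min(t,r)/(rt)=1/\max(t,r)\lesssim(1+t+r)^{-1}$, which directly gives the contribution $\lesssim M_0(1+t+r)^{-1}(1+|t-r|)^{-\a}$. For the $M_1$-pieces I will split into $r\ge t/2$ and $r<t/2$: in the first regime the direct integration bound combined with $r\sim t+r$ gives the claim; in the second regime $|y|\ge t-r\ge t/2$ holds on the whole sphere, so a crude pointwise bound on $(1+|y|)^{-2-\a}$ suffices, yielding $t$ times the spherical mean $\lesssim M_1 t(1+t)^{-2-\a}\lesssim M_1(1+t+r)^{-1}(1+|t-r|)^{-\a}$. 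When $t+r\le 1$, the weights on both sides of the desired inequality are bounded and the estimate follows trivially from Kirchhoff's formula.

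The one subtle point is the first spherical mean $\frac{1}{4\pi}\int_{S^2}w_0(x+t\om)\,d\om$, which lacks the $t$ prefactor that would absorb the $1/(rt)$ factor produced by the change of variables. The gain comes from the geometric observation that the sphere of radius $t$ about $x$ meets the shell $\{|y|\sim|t-r|\}$ in angular measure $\sim\min(t,r)/\max(t,r)$, and this ratio is exactly what converts the weaker pointwise decay $(1+|x|)^{-1-\a}$ of $w_0$ into the sharp spacetime decay $(1+t+r)^{-1}(1+|t-r|)^{-\a}$. Once this is built into the $\min(t,r)$ bound above, the remaining case analysis is routine bookkeeping.
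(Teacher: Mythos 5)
The paper cites this lemma as ``standard'' and does not supply a proof, so there is no in-paper argument to compare against; your writeup is a correct self-contained derivation of the folklore estimate via Kirchhoff's formula and reduction of each spherical mean to a radial integral over $[|t-r|,\,t+r]$, which is the canonical route (it appears in Hörmander and Sogge, for instance). The two elementary bounds on $\int_{|t-r|}^{t+r}\rho(1+\rho)^{-\b}\,d\rho$, the identification $\min(t,r)/(rt)=1/\max(t,r)$, and the split $r\gtrless t/2$ for the $t$-weighted means are exactly the right ingredients, and you correctly note that the passages $1/\max(t,r)\lesssim(1+t+r)^{-1}$ and $r^{-1}\lesssim(1+t+r)^{-1}$ require $t+r\gtrsim 1$, deferring the region $t+r\leq 1$ to a trivial pointwise bound. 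One small remark: the closing ``geometric observation'' about the angular measure of the shell $\{|y|\sim|t-r|\}$ being $\sim\min(t,r)/\max(t,r)$ is a useful heuristic but is not literally accurate in all regimes (that measure is $\sim|t-r|^2/(rt)$, which matches $\min(t,r)/\max(t,r)$ only when $|t-r|\sim\min(t,r)$, e.g.\ it degenerates near $t=r$); since your formal estimates never invoke it, this does not affect the validity of the proof.
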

Therefore, the contribution from the initial data is of the size $\varepsilon (1+t+r)^{-1} (1+|t-r|)^{-\frac{\gamma_0'-1}2}$. So by linearity we get the estimate 
\begin{equation}
    |\tilde A_\mu|\lesssim \varepsilon t^{-1+2\delta}\mathbf{1}_{t\geq r}+\varepsilon(1+t+r)^{-1}(1+q_+)^{-\frac{\gamma_0'-1}2}\mathbf{1}_{t<r}.
\end{equation}
\begin{remark}
    In the statement of Theorem \ref{mainthm1}, we extract the linear part $A^{Linear}_\mu$, which is exactly the initial data part we discussed here. Due to the slow decay of the initial data, this term decays less than other lower order terms in the interior. Nevertheless, it does not affect the main behavior in the interior either.
\end{remark}
It is also straightforward to show this with vector fields applied to $\tilde A_\mu$, i.e. $Z^I \tilde A_\mu$, by commuting the wave equation of $\tilde A_\mu$ with $Z^I$. This serves as a preliminary bound of $\tilde A_\mu$. 

Later we will also use improved bounds of the derivative of $\tilde A_\mu$, which is expected to be better than the decay of themselves. We already have better behaviors for derivatives of the initial data part (because in this case we can easily commute the linear wave equation with $(Z,S)$). For the source part, we decompose the source $$\Im(\phi \overline{D_\mu\phi})=\chi_{ex}(q)\Im(\phi \overline{D_\mu\phi})+(1-\chi_{ex}(q))\Im(\phi \overline{D_\mu\phi}).$$
We first note that the size is well-preserved when $Z$ or $S$ fall on the cutoff because $|(Z,S)q|\lesssim 1$.
For the exterior part, we can commute the wave equation with $S$: Using $[\Box,S]=2\Box$, we have $|\Box \tilde A_\mu^{source}|\lesssim |S(\chi_{ex}(q)\Im(\phi \overline{D_\mu\phi}))|+|\chi_{ex}(q)\Im(\phi D_\mu\phi)|$. Then since $S=\Om_{0r}+(t-r)\Lb$, we have $|S(\chi_{ex}(q)\Im(\phi\overline{D_\mu\phi}))|\lesssim \varepsilon^2 t^{-3+2\delta}(1+|t-r|)(1+q_+)^{-\gamma_0'}$. We see that the exterior source is still decaying enough, so this part gives a solution $S\tilde A_\mu^{source,ex}$ satisfying $\tau|\pa_\tau A_\mu^{source,ex}|=|SA_\mu^{source,ex}|\lesssim \varepsilon^2\langle t+r\rangle^{-1+2\delta}$. 

For the interior source $(1-\chi_{ex}(q))\Im(\phi \overline{D_\mu\phi})$, this is now a problem supported in $\{t-r\geq 1\}$. We can simply use the conformal energy estimate on the hyperboloids as in \cite{CL} which deals with compactly supported data:
\begin{equation*}
    E_{con}(Z^I \tilde A_\mu^{source,in},\tau)[H_\tau]^\frac 12\lesssim E_{con}(Z^I \tilde A_\mu^{source,in},\tau_0)[H_{\tau_0}]^\frac 12+\int_{\tau_0}^\tau \varepsilon s^{-\frac 12+\delta}\sum_{|J|\leq |I|+2}||D_Z^J \phi||_{L^2(H_s)}\lesssim \varepsilon \tau^{\frac 12+2\delta},
\end{equation*}
The norm $||\tau^2 t^{-1} \pa_\tau Z^I\tilde A_\mu^{source,in}||_{L^2(H_\tau)}$ is bounded by the conformal energy.
Then by the Klainerman-Sobolev estimate on hyperboloids, this implies $|\tilde A_\mu^{source,in}|\lesssim \varepsilon^2 t^{-\frac 12}\tau^{-\frac 32+2\delta}$. Therefore we have $|\pa_\tau\tilde A_\mu^{source}|\lesssim \varepsilon t^{-\frac 32}\tau^{-\frac 12+2\delta}$. Combining these two source parts with the linear part we can get $|\pa_\tau \tilde A_\mu|\lesssim \varepsilon t^{-\frac 12}\tau^{-\frac 32+2\delta}$.


\vspace{1ex}

We also need a weak decay of derivatives of the scalar field without the connection. For example, we have
$|Z_1 Z_2 \phi|\leq |D_{Z_1} D_{Z_2} \phi|+|Z_1^\mu A_\mu Z_2\phi|+|Z_1(Z_2^\nu A_\nu)\phi|+|(Z_1^\mu A_\mu)(Z_2^\nu A_\nu)\phi|\lesssim \varepsilon\langle t+r\rangle^{-\frac 32+3\delta}$. Generally we have $|Z^I \phi|\lesssim \varepsilon \langle t+r\rangle^{-\frac 32+(2|I|+1)\delta}$. Note that $\delta$ is small, and we are not doing boostrap arguments in this section, so we will not be very careful on the coefficients in front of $\delta$.

\vspace{3ex}

To derive the asymptotic behaviors of the Klein-Gordon field, we will integrate along the hyperboloidal rays ($r/t$ being constants less than $1$). This requires a ``initial condition" provided by the initial data and the decay of the field near the light cone. The decay of the scalar field $\phi$ on the cone, from the exterior result, are
\begin{equation*}
    |D_Z^I\phi|\lesssim \varepsilon r^{-\frac 32}.
\end{equation*}
We remark here this decay on the cone can be improved. In \cite{LeFLochMa2022model}, LeFloch-Ma observed the following identity:
\begin{equation*}
    -\pa_t^2\phi+\triangle_x\phi=\frac{r^2-t^2}{t^2}\pa_t^2\phi-\frac{2x^i}{t^2}\pa_t \Om_{0i}\phi+t^{-2}\Om_{0i}\Om_{0i}\phi+\frac{x^i}{t^3}\Om_{0i}\phi-t^{-1}(3+\frac{r^2}{t^2})\pa_t\phi
\end{equation*}
where we take the sum on repeated spatial indices. This yields the estimate
\begin{equation*}
    |\Box\phi|\lesssim \varepsilon t^{-1}(1+|t-r|)\sum_{|I|\leq 2} |Z^I\phi|.
\end{equation*}
Therefore, using the bounds we get and the equation
\begin{equation*}
    \phi=\Box\phi+2iA^\mu \pa_\mu\phi-A^\mu A_\mu\phi,
\end{equation*}
we obtain
\begin{equation*}
    |\phi|\lesssim \varepsilon t^{-\frac 52+\delta}(1+(t-r))\lesssim \varepsilon\tau^{-\frac 32+\delta} (1-|y|^2)^{\frac 74-\frac\delta 2}+\varepsilon\tau^{-\frac 52+\delta} (1-|y|^2)^{\frac 54-\frac \delta 2} , \quad\text{when }t>r.
\end{equation*}
So in particular $\phi$ decays at the rate $r^{-\frac 52+\delta}$ along the light cone. Similar bounds hold with some vector fields.

\subsection{Asymptotics}
\subsubsection{The scalar field}
Recall that the scalar field equation can be written under the Lorenz gauge as
\begin{equation*}
    -\Box \phi+\phi=2iA^\mu \pa_\mu \phi-A^\mu A_\mu \phi.
\end{equation*}
The charge part of $A_\mu$, i.e. $\q_0 \chi_{ex}(r-t) r^{-1}\delta_{0\mu}$, is supported around the light cone and contribute little to our analysis below. Therefore we focus on the chargeless part and with a slight abuse of notation denote $\tilde A_\mu$ by $A_\mu$ in this subsection. 

We decompose $A^\mu \pa_\mu\phi$ in the hyperboloidal frame:
\begin{equation}
    A^\mu \pa_\mu\phi=(A^\tau \pa_\tau \phi+R_{tan,1})\chi_{LC}(q)+(1-\chi_{LC}(q))A^\mu \pa_\mu\phi.
\end{equation} 
The cutoff function $\chi_{LC}(q)$ is $1$ when $q\leq -1$ and $0$ when $q\geq -\frac 12$. We introduce this cutoff because the decomposition in the hyperboloidal frame becomes irregular close to the the light cone. The term $R_{tan,1}$ involves derivative tangential to hyperboloids, and is of the size $O(t^{-1}|A||\nab_y \phi|)$. Then, in the support of $\chi_{LC}(q)$, we have
\begin{equation*}
    \begin{split}
        |R_{tan,1}|&\lesssim \varepsilon t^{-1} t^{-1+\delta} (1-|y|^2)^{-1} |\Om\phi|\lesssim \varepsilon^2 t^{-2+\delta} (1-|y|^2)^{-1} t^{-\frac 52+\delta}(t-r)\\
        &\lesssim \varepsilon^2\tau^{-\frac 72+2\delta} (1-|y|^2)^{\frac 74-\delta}
    \end{split}
\end{equation*}
which is good. We also denote $R_{LC,1}=(1-\chi_{LC}(q))A^\mu \pa_\mu \phi$, and $|R_{LC,1}|\lesssim \varepsilon t^{-\frac 72+2\delta}$. The decay of $A^\mu A_\mu\phi$ is even better and we omit it.

Using $-\Box=\pa_\tau^2+3\tau^{-1}\pa_\tau-\tau^{-2}\triangle_y$, one can write the Klein-Gordon equation in the hyperboloidal coordinates
\begin{equation*}
    \pa_\tau^2 (\tau^\frac 32\phi)+\tau^\frac 32\phi=\tau^\frac 32(2i A^\mu \pa_\mu\phi-A^\mu A_\mu\phi)+\tau^{-\frac 12}(\triangle_y \phi+\textstyle\frac 34\phi)
\end{equation*}
Now let $\Phi=\tau^\frac 32\phi$, and $\Phi_\pm=e^{\mp i\tau}(\pa_\tau\Phi\pm i\Phi)$. Using the decay above we have $|\Phi_\pm|\lesssim \varepsilon\tau^{-1+\delta}(1-|y|^2)^{\frac 54-\delta}+\varepsilon\tau^\delta (1-|y|^2)^{\frac 94-\delta}$. We have
\begin{equation}\label{dtauPhipm}
    \pa_\tau \Phi_\pm=e^{\mp i\tau}\left(\tau^\frac 32 (2iA^\mu\pa_\mu\phi-A^\mu A_\mu \phi)+\tau^{-\frac 12}(\triangle_y \phi+\textstyle\frac 34\phi)\right).
\end{equation}
Since $A^\mu \pa_\mu \phi=(A^\tau \pa_\tau \phi+R_{tan,1})\chi_{LC}(q)+R_{LC,1}=(\tau^{-\frac 32} A^\tau \pa_\tau \Phi-\frac 32 \tau^{-1} A^\tau \phi+R_{tan,1})\chi_{LC}(q)+R_{LC,1}$, and $\pa_\tau\Phi=\frac 12(e^{i\tau}\Phi_+ +e^{-i\tau}\Phi_-)$, we get
\begin{equation*}
    \pa_\tau\Phi_\pm =i\chi_{LC}(q)A^\tau \Phi_\pm+i e^{\mp 2i\tau}\chi_{LC}(q)A^\tau \Phi_\mp+e^{\mp i\tau}\tau^\frac 32 R,
\end{equation*}
where $R=2i\chi_{LC}(q)R_{tan,1}+\chi_{LC}(q)R_{tan,2}+2iR_{LC,1}+R_{LC,2}+R_{good}$, with $R_{tan,2}:=\tau^{-2}(\triangle_y \phi+\frac 34\phi)$, $R_{LC,2}=(1-\chi_{LC}(q))R_{tan,2}$, and $R_{good}=-A^\mu A_\mu\phi-3i\tau^{-1} A^\tau\phi$. We have the estimate $|\chi_{LC}(q)R_{tan,2}|\lesssim \varepsilon t^{-\frac 52+\delta}(t-r)\tau^{-2}\lesssim \varepsilon\tau^{-\frac 72+\delta} (1-|y|^2)^{\frac 94-\frac\delta 2}$, and $|R_{LC,2}|\lesssim \varepsilon\tau^{-2} t^{-\frac 52+\delta}\lesssim \varepsilon\tau^{-\frac 72+\delta} (1-|y|^2)^{\frac 54-\frac \delta 2}$.
This gives, e.g., for $\Phi_+$ that (for simplicity denote $\chi=\chi_{LC}(q)$)
\begin{equation*}
    \pa_\tau \left(e^{-i\int \chi A^\tau d\tau}(\Phi_+ +\frac 12 \chi A^\tau e^{-2i\tau}\Phi_-)\right)=\frac 12 e^{-2i\tau} \pa_\tau (e^{-i\int \chi A^\tau d\tau} \chi A^\tau \Phi_-)+\tau^\frac 32 e^{-i\tau-i\int \chi A^\tau d\tau} R.
\end{equation*}
We have 
\begin{equation*}
    \begin{split}
        |\pa_\tau(\chi A^\tau)\Phi_-|&=|(\pa_\tau \chi_{LC}(q))A^\tau \Phi_-|+|\chi_{LC}(q)\frac{x_\mu}\tau \pa_\tau A_\mu \Phi_-|\lesssim \varepsilon^2 t\tau^{-1}\c t^{-\frac 12} \tau^{-\frac 32+2\delta}\c \tau^\frac 32 t^{-\frac 52+\delta}(t-r)\\
        &\lesssim \varepsilon^2\tau^{-2+3\delta} (1-|y|^2)^{\frac 32-\frac\delta 2}
    \end{split}
\end{equation*}
using the bounds we established for $\pa_\tau A_\mu$.
When $\pa_\tau$ falls on $\Phi_-$, we can use \eqref{dtauPhipm} to see it behaves well. Overall, from all the estimates, we see that the right hand side of the equation can be bounded by $\varepsilon^2\tau^{-\frac 32+2\delta}(1-|y|^2)^{\frac 32-\delta}+\varepsilon\tau^{-2+2\delta}(1-|y|^2)^{\frac 54-\delta}$. 

Then, integrating this equation, we get $\Phi_+=e^{i\int \chi A^\tau d\tau} b_+(y)+O(\varepsilon\tau^{-1+3\delta}(1-|y|^2)^{\frac 54-\delta})$, where
\begin{multline}
    b_+(y)=\int_{\tau_0}^\infty \frac 12 e^{-2i\tau} \pa_\tau (e^{-i\int \chi A^\tau d\tau} \chi A^\tau \Phi_-)+\tau^\frac 32 e^{-i\tau-i\int \chi A^\tau d\tau} R d\tau\\
    +\left(e^{-i\int \chi A^\tau d\tau}(\Phi_+ +\frac 12 \chi A^\tau e^{-2i\tau}\Phi_-)\right)|_{\tau=\tau_0}.
\end{multline}
Note that along the hyperboloid $\{t^2-r^2=\tau_0^2\}$ we have $t^{-2}\sim (1-|y|^2)$. Then since we have $|Z^I\phi|\lesssim \varepsilon t^{-\frac 52+\delta}$ when $|t-r|$ is bounded, we have that along the initial hyperboloid $|\Phi_\pm|\lesssim \varepsilon(1-|y|^2)^{\frac 54-\delta}$. Therefore we can derive $|b_+(y)|\lesssim \varepsilon (1-|y|^2)^{\frac 54-\delta}$.

\begin{remark}
    In view of the expression of $b_+(y)$ and the fact that $\Om_{ij}$ and $\Om_{0i}$ are tangent to the hyperboloids (and behaves well when falling on the cutoff $\chi_{LC}(q)$), we can derive similar bounds of derivatives of $b_+(y)$:
    \begin{equation}
        (1-|y|^2)^{|I|} \nab_y^I\, b_+(y)\lesssim \varepsilon (1-|y|^2)^{\frac 54-\delta}.
    \end{equation}
\end{remark}

Similarly we have $\Phi_- =e^{-i\int A^\tau d\tau}b_-(y)+O(\tau^{-1+3\delta} (1-|y|^2)^{\frac 54-\frac \delta 2})$ with $b_-(y)$ satisfying the same bounds. Then using $\Phi=-\frac i2(e^{i\tau}\Phi_+-e^{-i\tau}\Phi_-)$ and $\pa_\tau\Phi=\frac 12(e^{i\tau}\Phi_+ +e^{-i\tau}\Phi_-)$, we have
\begin{equation}
    \begin{split}
        &\phi\sim -\frac i2\tau^{-\frac 32}(e^{i\tau+i\int \chi A^\tau d\tau} b_+(y)-e^{-i\tau-i\int A^\tau d\tau} b_-(y)),\\
        &\pa_\tau \phi\sim \frac 12\tau^{-\frac 32}(e^{i\tau+i\int \chi A^\tau d\tau} b_+(y)+e^{-i\tau-i\int A^\tau d\tau} b_-(y)).
    \end{split}
\end{equation}
In order to make the expansion of $\phi$ look simpler, we define $a_\pm(y):=\mp\frac i2 b_\pm(y)$. Then 
\begin{equation}
    \begin{split}
        &\phi\sim \tau^{-\frac 32}(e^{i\tau+i\int \chi A^\tau d\tau} a_+(y)+e^{-i\tau-i\int \chi A^\tau d\tau} a_-(y)),\\
        &\pa_\tau \phi\sim i\tau^{-\frac 32}(e^{i\tau+i\int \chi A^\tau d\tau} a_+(y)-e^{-i\tau-i\int \chi A^\tau d\tau} a_-(y)).
    \end{split}
\end{equation}

\subsubsection{The gauge potential}
We now turn to the wave equations $-\Box \tilde A_\mu=\Im(\phi\c\overline{D_\mu\phi})$ and derive the asymptotics of $A_\mu$. We have 
\begin{equation*}
    \Im (\phi\c\overline{D_\mu\phi})=-\frac{x_\mu}\tau \Im (\phi\c \overline{\pa_\tau \phi})+R_{cub}+R_{tan,w},
\end{equation*}
where $R_{cub}=O(|A|\cdot |\phi|^2)$, $R_{tan,w}=O(|\phi|\cdot t^{-1}|\nab_y \phi|)$.
Plugging the asymptotics of the scalar field above, we have
\begin{equation*}
    \begin{split}
        \Im&(\phi\c\overline{\pa_\tau\phi}) \\
        &\approx \Im\left(-i\tau^{-3}(e^{i\tau+i\int A^\tau d\tau} a_+(y)+e^{-i\tau-i\int A^\tau d\tau} a_-(y))(e^{-i\tau-i\int A^\tau d\tau} \overline{a_+(y)}-e^{i\tau+i\int A^\tau d\tau} \overline{a_-(y)})\right)\\
        &=\Im\left(-i\tau^{-3} \left(|a_+(y)|^2-|a_-(y)|^2-e^{2i(\tau+\int A^\tau d\tau)}a_+(y)\overline{a_-(y)}+e^{-2i(\tau+\int A^\tau d\tau)}a_-(y)\overline{a_+(y)}\right)\right)\\
        &=-\tau^{-3}(|a_+(y)|^2-|a_-(y)|^2),
    \end{split}
\end{equation*}
where the remainder (from the ``$\approx$") $R_{KG}=O(\varepsilon^2\tau^{-3} (1-|y|^2)^{\frac 54-\delta}\tau^{-1+3\delta} (1-|y|^2)^{\frac 54-\frac \delta 2})=O(\varepsilon^2 t^{-4+3\delta})$. Also, while the expansion is only applicable when $\tau\geq \tau_0$, we use this expression everywhere when $t\geq r$, as one can see that the error $R_{source}$ is supported in a region near the light cone $\{t\geq r,\, t-r\leq \tau_0 (t+r)^{-1}\}$, and hence decaying at the rate better than $t^{-\frac 72}$ using the boundedness of $t-r$. Therefore the equation becomes
\begin{equation*}
    -\Box \tilde A_\mu=\frac{x_\mu}\tau \tau^{-3} (|a_+(y)|^2-|a_-(y)|^2)+R_{cub}+R_{tan,w}+R_{KG}+R_{source}.
\end{equation*}
Again, by linearity, we consider vanishing initial data first. Notice that $\frac{x_\mu}\tau$ is only dependent on $y$ variables. Therefore the analysis of the main terms is in fact on the equation
\begin{equation*}
    -\Box A^M_\mu=t^{-3} F_\mu(y)
\end{equation*}
with vanishing data (at $t=2$), where $F_\mu(y)=(\frac t\tau)^3 \frac{x_\mu}\tau (|a_+(y)|^2-|a_-(y)|^2)$. These functions in $y$ are defined for $|y|\leq 1$, and we understand them as zero when $|y|>1$. We have studied this equation in detail in \cite{CL}: Using the representation formula, the solution can be written as
\begin{equation}\label{lambdaformula}
\begin{split}
    A^M_\mu(t,x)&=\frac 1{4\pi}\int_2^t \frac{1}{t-s} \int_{|\bar{z}|=t-s} s^{-3} F_\mu\left (\frac{x-\bar z}{s}\right ) \, d\sigma(\bar z) ds\\
    &=\frac 1{4\pi} \int_{\frac 2t}^1 \frac{1}{1-\lambda} \int_{|\bar z|=(1-\lambda)t} (\lambda t)^{-3} F_\mu\left (\frac{x-\bar z}{\lambda t}\right )\, d\sigma(\bar z) d\lambda\\
    &=\frac 1{4\pi}\int_{\frac 2t}^1 \int_{\mathbb{S}^2} (1-\lambda)t^{-1}\lambda^{-3} F_\mu\left (\frac{x/t-(1-\lambda)\eta}{\lambda}\right )\, d\sigma(\eta) d\lambda
    \end{split}
\end{equation}
Notice that if $\lambda$ near $0$ satisfies $1-\lambda-r/t>\lambda$, i.e., $\lambda<\frac 12(1-\frac rt)$, then the integrand is zero because of the support of $F_\mu$. Therefore, if $2/t<\frac 12(1-r/t)$, i.e., $t-r>4$, we can replace the lower bound of the integral $2/t$ by $\frac 12(1-r/t)$. As a result, in the region $\{t-r>4\}$ we have $A^M_\mu=t^{-1} \widetilde U_\mu(x/t)$, where $$\widetilde U_\mu(y):=\frac 1{4\pi}\int_{\frac 12(1-|y|)}^1 \int_{\eta\in \mathbb{S}^2} (1-\lambda) \lambda^{-3} F_\mu\left (\frac{y-(1-\lambda)\eta}{\lambda}\right )\, d\sigma(\eta) d\lambda.$$
Therefore, for $U_\mu(y)=\frac \tau t \widetilde U_\mu(y)$, we have
\begin{equation*}
    A^M_\mu=\frac{U_\mu(y)}\tau, \quad t-r\geq 4.
\end{equation*}
For the remaining inhomogeneous terms, we consider
\begin{equation*}
    -\Box A^R_\mu=R_{cub}+R_{tan,w}+R_{good}+R_{LC}
\end{equation*}
with vanishing data. Using the decay estimates we can bound all of them by $\varepsilon^2(1+t+r)^{-4+3\delta}$. This implies
\begin{equation*}
    |A^R_\mu|\lesssim \varepsilon^2 (1+t+r)^{-1}(1+|t-r|)^{-1+3\delta}, \text{ so } |A^R_\mu|\lesssim \varepsilon^2\tau^{-2+3\delta}(1-|y|^2)^{\frac {3}2 \delta} \text{ when $t-r\geq 1$.}
\end{equation*}
Now for the initial data part, we have already derived the estimate of its contribution $\varepsilon(1+t+r)^{-1}(1+|t-r|)^{-\frac{\gamma_0'-1}2}\lesssim \varepsilon\tau^{-\frac{\gamma_0'+1}2}(1-|y|^2)^{\frac{3-\gamma_0'}4}$. So combining these together we have (note that $\gamma_0'>1$)
\begin{equation*}
    A_\mu=\frac{U_\mu(y)}\tau+O(\varepsilon\tau^{-\frac{\gamma_0'+1}2}(1-|y|^2)^{\frac{3-\gamma_0'}4}), \quad t-r\geq 4.
\end{equation*}
Therefore, we also have
\begin{equation*}
    A^\tau=\frac{U(y)}\tau+O(\varepsilon\tau^{-\frac{\gamma_0'+1}2}(1-|y|^2)^{-\frac{\gamma_0'-1}2}),\quad t-r\geq 4,
\end{equation*}
so for the phase in the expansion of the complex scalar field, we have $\int \chi_{LC}(q) A^\tau d\tau= U(y)\ln\tau+h$, where $h=O(\tau^{-1}(1+(t-r))^{-\frac{\gamma_0'}2})$ is uniformly bounded in the region $\{t-r\geq 4\}$.

\vspace{0.5ex}

Note that if we denote the initial data part (linear part) by $A_\mu^{Linear}$ and write them as an independent part in the expansion, then the remainder has better estimates as in Theorem \ref{mainthm1}.

\subsubsection{Radiation fields}
Now we discuss the behavior of $A_\mu$ towards null infinity. While the lower order terms above decay well towards timelike infinity, one cannot expect the decay to be better than $r^{-1}$ along the light cone. In fact, since each of them satisfies a wave equation, the radiation fields all exist. For example we have
\begin{prop}
    The limit $\lim_{r\rightarrow \infty} rA^M_\mu(r,q,\om)$ exists for each $q(=r-t)$ and $\om$, which we denote by $F^M_\mu(q,\om)$. We also have the bound
    \begin{equation}
        |rA^M_\mu-F^M_\mu|\lesssim \varepsilon r^{-1} q_-
    \end{equation}
    where $q_-=\max\{-q,0\}$.
\end{prop}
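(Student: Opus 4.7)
The plan is to split into three regions based on $q$. First, for $q > 0$ (exterior), a direct check of the Kirchhoff representation $A^M_\mu(t, x) = \frac{1}{4\pi}\int \frac{f(t - |x - y|, y)}{|x - y|}\mathbf{1}_{t - |x-y|\geq 2}\,dy$ with $f(s, y) = s^{-3} F_\mu(y/s)$ supported in $\{|y| \leq s\}$ shows that the integrand vanishes identically. Indeed, the support constraint $|y| \leq s = t - |x - y|$ combined with the triangle inequality $|x - y| \geq r - |y|$ gives $|y| \leq t - r + |y| = -q + |y|$, forcing $q \leq 0$. Hence $A^M_\mu \equiv 0$ in $\{r > t\}$; we set $F^M_\mu(q, \om) := 0$ and the bound holds trivially since $q_- = 0$.

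For $q \leq -4$, we use the closed form $A^M_\mu = U_\mu(y)/\tau$ from the preceding subsection. Setting $\widetilde U_\mu(y) := (t/\tau) U_\mu(y)$ gives $rA^M_\mu = (r/t)\widetilde U_\mu(y)$; by the boundary asymptotics $(1-|y|^2)^{-1/2} U_\mu(y) \to \mathcal U_\mu(\om)$ stated in Theorem \ref{mainthm1}, $\widetilde U_\mu(y) \to \mathcal U_\mu(\om)$ as $|y| = r/t \to 1^-$, so the limit exists and equals $\mathcal U_\mu(\om)$; we define $F^M_\mu(q, \om) := \mathcal U_\mu(\om)$. The error splits as
\begin{equation*}
rA^M_\mu - \mathcal U_\mu(\om) = \left(\frac{r}{t} - 1\right)\widetilde U_\mu(y) + \bigl(\widetilde U_\mu(y) - \mathcal U_\mu(\om)\bigr).
\end{equation*}
The first term is bounded by $|r/t - 1| \cdot |\widetilde U_\mu| \lesssim (|q|/r)\varepsilon$ since $|r/t - 1| = |q|/t$. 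For the second, we derive a Lipschitz-type estimate $|\widetilde U_\mu(y) - \mathcal U_\mu(\om)| \lesssim \varepsilon(1 - |y|^2)$ from the explicit integral formula for $\widetilde U_\mu(y)$ using the Hölder decay $|F_\mu(\zeta)| \lesssim \varepsilon^2 (1-|\zeta|^2)^{1/2-2\delta}$; combining with $1 - |y|^2 = \tau^2/t^2 \lesssim |q|/r$ yields the desired bound.

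In the remaining range $-4 < q \leq 0$, the target bound is just $\lesssim \varepsilon/r$ since $|q|$ is bounded. We work directly from the Kirchhoff formula, expanding $|x - y| = r - \om\cdot y + O(|y|^2/r)$ for large $r$ so that $r/|x - y| = 1 + O(|y|/r)$ and $t - |x - y| = -q + \om\cdot y + O(|y|^2/r)$. The leading contribution, after the change of variables $\zeta = y/(-q + \om\cdot y)$ (whose Jacobian is $|q|^3(1 - \om\cdot\zeta)^{-4}$), becomes the $q$-independent integral
\begin{equation*}
\frac{1}{4\pi}\int_{|\zeta| \leq 1} \frac{F_\mu(\zeta)}{1 - \om\cdot\zeta}\,d\zeta,
\end{equation*}
which must agree with $\mathcal U_\mu(\om)$ to be consistent with the Region II analysis. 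The $O(\varepsilon/r)$ error comes from bounding the Taylor remainders in the Kirchhoff integrand, using again the Hölder decay of $F_\mu$ to control the error over the (unbounded) paraboloidal support $|y| \leq -q + \om\cdot y$. The main technical obstacles are establishing the Lipschitz-type estimate for $\widetilde U_\mu$ at the boundary $|y| = 1$ in Region II, and ensuring convergence of the $1/r$ Kirchhoff remainder at the far end of the $y$-integration region in Region III, where $\zeta \to \om$.
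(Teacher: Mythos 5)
The paper's proof and yours take fundamentally different routes. The paper works at the PDE level: it commutes $-\Box A^M_\mu = x_\mu\tau^{-4}(|a_+|^2-|a_-|^2)$ with $\triangle_\om$, uses the null decomposition $\Lb L(r A^M_\mu) = -r\,\Box A^M_\mu + r^{-1}\triangle_\om A^M_\mu$, integrates first along incoming null rays from the initial slice to get $|L(rA^M_\mu)|\lesssim \varepsilon\, r^{-1}(1+t+r)^{-1} q_-$, and then integrates along outgoing rays to obtain both the existence of the limit and the convergence rate in one stroke, uniformly in $q$. You instead work from the Kirchhoff representation, split into three $q$-regions, and attempt explicit asymptotic analysis of the integral. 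That route could in principle work but, as written, it has a serious circularity and leaves the hard estimates unproved.

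The circularity: in Region II ($q\le -4$) you invoke "the boundary asymptotics $(1-|y|^2)^{-1/2}U_\mu(y)\to\mathcal U_\mu(\om)$ stated in Theorem \ref{mainthm1}" to define $F^M_\mu(q,\om)=\mathcal U_\mu(\om)$. But the paper derives the existence of precisely that limit as a \emph{corollary} of the present Proposition (see the sentence immediately following the paper's proof: "the proposition above also shows the existence of the limit $\lim_{|y|\to 1^-}(1-|y|^2)^{-1/2}U_\mu(y)$"). You cannot cite the theorem's statement about $\mathcal U_\mu$ to prove the proposition that underlies it. To make Region II self-contained you would have to prove the Lipschitz-type estimate $|\widetilde U_\mu(y)-\mathcal U_\mu(\om)|\lesssim\varepsilon(1-|y|^2)$ directly from the integral formula for $\widetilde U_\mu$ — you flag this as "the main technical obstacle" but do not do it, and it is genuinely delicate because the integrand degenerates as $|y|\to 1$.

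There is also a quantitative gap in Region III ($-4<q\le 0$). You argue that "the target bound is just $\lesssim\varepsilon/r$ since $|q|$ is bounded," but the statement requires $|rA^M_\mu-F^M_\mu|\lesssim\varepsilon\, r^{-1}q_-$, which \emph{vanishes linearly} as $q\to 0^-$. A bound $\lesssim\varepsilon/r$ uniform on $(-4,0]$ does not give the needed $q_-$ factor near $q=0$; obtaining it from the Kirchhoff formula requires tracking that the domain of integration (the paraboloidal region $|y'|\le -q+\om\cdot y'$) itself shrinks to a point as $q\to 0^-$, a degeneration your Taylor-expansion argument does not address. Together with the unproved Jacobian/convergence claims for the change of variables $\zeta = y'/(-q+\om\cdot y')$ and the consistency of the resulting limit with $\mathcal U_\mu(\om)$, these are substantive gaps, not merely omitted routine details. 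The null-decomposition proof sidesteps all of them, which is why the paper chose it.
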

\begin{proof}
    Recall the equation $-\Box A^M_\mu=\frac{x_\mu}\tau \tau^{-3}(|a_+(y)|^2-|a_-(y)|^2)$. Since the bound of $a_+(y)$ is similar with vector fields, we can commute the equation with $\triangle_\om=\sum\Om_{ij}^2$ to derive the decay of $\triangle_\om A^M_\mu$. Then one can use the decomposition $-\Box A^M_\mu=r^{-1}(\pa_t-\pa_r)(\pa_t+\pa_r)(r A^M_\mu)+r^{-2}\triangle_\om A^M_\mu$. Integrating in $-(\pa_t-\pa_r)$ direction to the initial slice gives (when $t<r$ the source is zero)
    \begin{equation*}
        |(\pa_t+\pa_r)(rA^M_\mu)|\lesssim \varepsilon r^{-1} (1+t+r)^{-1} q_-.
    \end{equation*}
    Now integrating in $\pa_t+\pa_r$ direction for $r\in [r_1,r_2]$, we get
    \begin{equation*}
        |r_1 A^M_\mu(r_1,q,\om)-r_2 A^M_\mu(r_2,q,\om)|\lesssim \varepsilon r_1^{-1} q_-,
    \end{equation*}
    which shows the existence of the limit. Letting $r_2\rightarrow\infty$ also gives
    \begin{equation*}
        |r A^M_\mu-F^M_\mu(q,\om)|\lesssim \varepsilon r^{-1} q_-
    \end{equation*}
    as required.
\end{proof}
Since $A^M_\mu=U_\mu(y)/\tau$ when $t-r\geq 4$, the proposition above also shows the existence of the limit $\lim_{|y|\rightarrow 1^-}(1-|y|^2)^{-\frac 12} U_\mu(y)$. For each $\mu=0,1,2,3$, the limit is a function of $\om$ and we denote it as $\mathcal U_\mu(\om)$. Then we have $F^M_\mu(q,\om)=\mathcal U_\mu(\om)$ when $q<-4$.

One can similarly prove the existence of the radiation field for other parts ($A^R_\mu$ and the initial data part). However, due to additional decay in $\langle t-r\rangle$, these radiation fields is decaying in $\langle q \rangle$ as $q\rightarrow \pm\infty$. Then, also taking into account of the charge part $\q_0 \chi_{ex}(r-t) r^{-1}\delta_{0\mu}$, we have the radiation field of $A_\mu$, denoted by $F^{total}_\mu(q,\om)$, satisfies
\begin{equation}
    F^{total}_\mu(q,\om)=\mathcal U_\mu(\om)\mathbf 1_{q<0}+\q_0 \delta_{0\mu}\mathbf 1_{q>0}+O(\varepsilon\langle q\rangle^{-\frac{\gamma_0'-1}2}).
\end{equation}

\begin{remark}
    We can also compute the convergence rate to the radiation fields for other parts. For example for the linear (initial data) part, we can proceed similarly as above to get
    \begin{equation*}
        |(\pa_t-\pa_r)(\pa_t+\pa_r)(A^{Linear}_\mu)|\lesssim \varepsilon r^{-1}(1+t+r)^{-1}(1+|q|)^{-\frac{\gamma_0'-1}2}
    \end{equation*}
    Integrating this to the initial slice means integrating $q$ between $r-t$ and $r+t$, which gives
    \begin{equation*}
        |(\pa_t+\pa_r)(rA^{Linear}_\mu)|\lesssim \varepsilon r^{-1}(1+t+r)^{-1}((1+|q|)^{1-\frac{\gamma_0'-1}2}+(1+t+r)^{1-\frac{\gamma_0'-1}2})
    \end{equation*}
    which gives 
    \begin{equation}
        |A^{Linear}_\mu-\chi(\frac\qq r)F^{Linear}_\mu(q,\om)|\lesssim \varepsilon (1+t+r)^{-1-\frac{\gamma_0'-1}2}.
    \end{equation}
\end{remark}
\vspace{1ex}

Nevertheless, this is not the whole story. The gauge condition itself implies additional structure: if we decompose the Lorenz gauge condition $\pa^\mu A_\mu$ in the null frame $\{L,\Lb,e_1,e_2\}$, we see that $\Lb A_L\sim -L A_{\Lb}+2(e_B A_\mu)(e_B)^\mu$. Therefore, the $\Lb$ derivative of some components of $A$, generally expected to be the ``bad" derivative, behaves like tangential derivatives (to the outgoing light cone). Therefore, the $A_L$ component may decay at a rate better than $r^{-1}$. This is observed in \cite{CKL} for the massless MKG system in the Lorenz gauge. We now prove that this also holds for the massive system.

\begin{prop}
    The radiation fields $F_\mu(q,\om)$ satisfies $F_L(q,\om)=\q_0$ for all $q$ and $\om$.
\end{prop}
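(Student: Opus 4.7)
The plan is to extract from the Lorenz gauge condition a structural identity that forces $\Lb A_L$ to be, up to terms vanishing at rate $o(r^{-1})$, a ``tangential'' quantity at null infinity, so that the radiation field $F_L(q,\om)$ must be independent of $q$. We then identify its constant value by letting $q\to+\infty$, where the only surviving contribution to $rA_L$ is the explicit charge tail $\q_0\chi_{ex}(r-t)r^{-1}\delta_{0\mu}$.

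First, I would decompose $\pa^\mu A_\mu=0$ in the null frame. Using $\pa_t=\f12(L+\Lb)$, $\pa_r=\f12(L-\Lb)$, $A_L=A_0+A_r$, $A_{\Lb}=A_0-A_r$, and the $\mathbb R^3$ divergence identity $\pa_i A_i=\pa_r A_r+\tfrac 2r A_r+\nablas^B A_{e_B}$, the gauge condition rewrites as
\begin{equation*}
\Lb A_L=-\,LA_{\Lb}+\frac{2}{r}\bigl(A_L-A_{\Lb}\bigr)+2\nablas^B A_{e_B}.
\end{equation*}
All three terms on the right are ``good'' near null infinity: $LA_{\Lb}$ is the $\pa_r|_{q,\om}$ derivative of a field whose rescaling $rA_{\Lb}$ converges to a radiation field, hence is $O(r^{-2})$; the term $\tfrac 2r(A_L-A_{\Lb})$ is manifestly $O(r^{-2})$; and $\nablas^B A_{e_B}$ gains an extra $r^{-1}$ from the angular derivative. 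These bounds can be justified using the preliminary decay of $A_\mu$ together with the quantitative convergence rates $|rA_\mu-F_\mu(q,\om)|\lesssim \varepsilon r^{-1}q_-$ (and the analogue for the linear part with rate $(1+t+r)^{-1-(\gamma_0'-1)/2}$) established just above for each constituent piece, commuting with rotation fields $\Om_{ij}$ to treat the angular term.

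Second, at fixed $r,\om$ one has $\pa_q=-\pa_t=-\f12(L+\Lb)$, hence
\begin{equation*}
\pa_q(rA_L)=-\frac{r}{2}\bigl(LA_L+\Lb A_L\bigr).
\end{equation*}
The term $rLA_L$ tends to $0$ as $r\to\infty$ because $L$ is the good derivative tangent to outgoing null cones and so $LA_L=O(r^{-2})$, while $r\Lb A_L\to 0$ by the null-frame identity above. Integrating $\pa_q(rA_L)$ in $q$ from $q_1$ to $q_2$ and passing to $r\to\infty$ yields $F_L(q_2,\om)=F_L(q_1,\om)$, so $F_L$ is in fact independent of $q$.

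Finally, to identify this $\om$-dependent constant, let $q\to+\infty$. From the decomposition $A_\mu=\tilde A_\mu+\q_0\chi_{ex}(r-t)r^{-1}\delta_{0\mu}$, the charge part contributes exactly $\q_0 r^{-1}$ to $A_0$ (and hence to $A_L=A_0+A_r$) wherever $\chi_{ex}=1$, while the chargeless remainder satisfies $|\tilde A_\mu|\lesssim \varepsilon(1+t+r)^{-1}(1+q_+)^{-(\gamma_0'-1)/2}$ in the exterior, so $r\tilde A_L\lesssim\varepsilon(1+q_+)^{-(\gamma_0'-1)/2}\to 0$. Hence $F_L(q,\om)=\q_0$, independent of both $q$ and $\om$. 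The main obstacle will be the uniform-in-$q$ smallness of $r\Lb A_L$ on compact $q$-intervals, needed to exchange the limit $r\to\infty$ with the $q$-integration; this reduces to upgrading the pointwise convergence rate $rA_\mu\to F_\mu$ together with a matching rate for one angular and one scaling derivative, which one obtains by commuting the underlying wave equations with $\Om_{ij}$ and $S$ and repeating the convergence argument already carried out for $A^M_\mu$.
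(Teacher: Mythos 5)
Your proposal is correct in spirit but takes a genuinely different route from the paper. The paper does not attempt to estimate $\Lb A_L$ directly. Instead, it forms the quantity $L\bigl(r\Lb(rA_L)+r\tilde A_\Lb\bigr)$, expands it using the Friedlander decomposition of $\Box$, and then substitutes the null decomposition of the gauge condition to replace $r\Lb A_L+rLA_\Lb$ by angular terms. The outcome is an identity whose right-hand side consists of the quadratic source $r^2 J_L$ and tangential/lower-order terms, all decaying. Integrating once along $L$ (backward to $r=0$, where the prefactor $r$ kills the quantity) gives a quantitative bound on $r\Lb(rA_L)+r\tilde A_\Lb$; subtracting the known bound on $r\tilde A_\Lb$ isolates $\Lb(rA_L)$; integrating once more along $-\Lb$ back to the initial slice, where $|rA_L-\q_0|\lesssim\varepsilon(1+r)^{-(\gamma_0'-1)/2}$ is known, directly yields $|rA_L-\q_0|\to 0$. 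This two-fold integration to controlled boundary data avoids any interchange of limit and integral. Your approach instead isolates $\Lb A_L$ from the gauge condition alone, estimates each term via radiation-field convergence rates, and then shows $F_L$ is $q$-independent by an integration in $q$ at fixed $r$ followed by $r\to\infty$, identifying the constant at $q\to+\infty$. The non-trivial work you correctly flag --- upgrading the pointwise convergence $rA_\mu\to F_\mu$ uniformly on compact $q$-intervals, including one extra derivative, for \emph{all} constituent pieces ($A^M_\mu$, $A^R_\mu$, $A^{Linear}_\mu$) --- is precisely what the paper's integral argument sidesteps, since there one only needs the preliminary decay of $\tilde A_\mu$, its angular derivatives, and the initial bound. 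Both are sound; the paper's version is the tighter economy of means, yours is conceptually closer to the heuristic "$\pa_q A_L\sim 0$ at null infinity.''
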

\begin{proof}
We have
\begin{equation}
    \begin{split}
        L(r\Lb (rA_L)+r\tilde A_\Lb)&=rL\Lb(rA_L)+\Lb(r A_L)+L(rA_{\Lb}-\q_0 \chi_{ex}(r-t))\\
        &=-r^2 (\Box A_\mu)L^\mu+(\triangle_\om A_\mu)L^\mu+r\Lb A_L+rLA_{\Lb}-A_\mu (\pa_r)^\mu\\
        &=r^2 J_L+2r(e_B A_\mu)(e_B)^\mu+(\triangle_\om A_\mu)L^\mu-A_\mu(\pa_r)^\mu.
    \end{split}
\end{equation}
Notice that $A_\mu (\pa_r)^\mu=\tilde A_\mu (\pa_r)^\mu$, $e_B A_\mu=e_B\tilde A_\mu$, $\triangle_\om A_\mu=\triangle_\om \tilde A_\mu$, and $|re_B \tilde A_\mu|\lesssim \sum_{i<j}|\Om_{ij} A_\mu|$. Then using the decay estimates we have
\begin{equation*}
    |L(r\Lb (rA_L)+r\tilde A_\Lb)|\lesssim \varepsilon(1+t+r)^{-1+\delta}(1+q_+)^{-\frac{\gamma_0'-1}2}.
\end{equation*}
Integrating in $\pa_t+\pa_r$ direction backward to $r=0$ or $t=0$ gives
\begin{equation*}
    |r\Lb(r A_L)+r\tilde A_\Lb|\lesssim \varepsilon(1+t+r)^{\delta} (1+q_+)^{-\frac{\gamma_0'-1}2}.
\end{equation*}
Since $|\tilde A_L|\lesssim \varepsilon(1+t+r)^{-1+\delta}(1+q_+)^{-\frac{\gamma_0'-1}2}$, we get
\begin{equation*}
    |\Lb(rA_L)|\lesssim \varepsilon r^{-1}(1+t+r)^{\delta}(1+q_+)^{-\frac{\gamma_0'-1}2}.
\end{equation*}
Now integrate in $\pa_r-\pa_t$ direction to the initial slice. Recall that at the initial slice
\begin{equation*}
    |rA_L-\q_0|\lesssim \varepsilon (1+r)^{-\frac{\gamma_0'-1}2},
\end{equation*}
so we have
\begin{equation*}
    \begin{split}
        |rA_L-\q_0|&\leq |rA_L(t,r)-rA_L(0,t+r)|+|rA_L(0,t+r)-\q_0|\\
        &\lesssim \int_{r-t}^{r+t} \varepsilon r^{-1}(1+t+r)^{\delta}(1+q_+)^{-\frac{\gamma_0'-1}2} dq+\varepsilon (1+t+r)^{-\frac{\gamma_0'-1}2}\\
        &\leq \varepsilon r^{-1}(1+t+r)^\delta \int_{r-t}^{r+t} (1+q_+)^{-\frac{\gamma_0'-1}2} dq+\varepsilon (1+t+r)^{-\frac{\gamma_0'-1}2}\\
        &\lesssim \varepsilon r^{-1}(1+t+r)^{1+\delta-\frac{\gamma_0'-1}2}+\varepsilon (1+t+r)^{-\frac{\gamma_0'-1}2}.
    \end{split}
\end{equation*}
Note that $\delta$ is a small number, so we have $\delta<\frac{\gamma_0'-1}2$. Therefore, we have $rA_L\rightarrow \q_0$ as $r\rightarrow\infty$ for every $q=r-t$ fixed. This shows that the radiation fields $F^{total}_\mu$ satisfies $F^{total}_L=\q_0$.
\end{proof}

\subsection{Charge at infinity}
Recall the charge conservation identity $\pa^\mu J_\mu=0$. We can use this to derive an expression of charge using the information at infinity. 

Consider the region enclosed by the initial slice, a part of an incoming null cone $\underline C_{\tau^2}=\{t+r=\tau^2,\, t-r\leq 1\}$, and the hyperboloid $\hat H_\tau$ (truncated at $t-r=1$) to get
\begin{equation*}
    \int_{\hat H_\tau} \Im (\phi\c\overline{D_\tau \phi})\, dH_\tau+\int_{\underline C_{\tau^2}} \Im(\phi\c\overline{D_{\Lb} \phi})\, dvol=4\pi \q_0.
\end{equation*}
Using the decay estimates, it is easy to see that the second term on the left is decaying to zero as $\tau \rightarrow\infty$. On the other hand, we have shown above that in the interior
\begin{equation*}
   \Im(\phi\c\overline{D_\tau\phi})=-\tau^{-3}(|a_+(y)|^2-|a_-(y)|^2)+O(t^{-\frac 72+\delta}),
\end{equation*}
so using $dH_\tau=\tau^3 dH_1$, we have for the first integral
\begin{equation*}
\begin{split}
    \int_{\hat H_\tau} \Im(\phi\c\overline{D_\tau\phi})\, dH_\tau&=\int_{\hat H_\tau} (\tau^{-3}(|a_+(y)|^2-|a_-(y)|^2)+O(t^{-\frac 72+\delta})) \tau^3 dH_1\\
    &\rightarrow\int_{H_1} |a_-(y)|^2-|a_+(y)|^2 \, dH_1 \quad \text{ as }\tau\rightarrow\infty. 
\end{split}
\end{equation*}
Therefore
\begin{equation}\label{q0atinfinity}
    \q_0=\frac{1}{4\pi}\int_{H_1} |a_-(y)|^2-|a_+(y)|^2 \, dH_1.
\end{equation}


\section{Scattering from infinity}
The goal of this section is to study the scattering from infinity problem of the mMKG system. We will first consider the system in the Lorenz gauge:
\begin{equation}
    -\Box A_\mu=\Im(\phi\c\overline{D_\mu\phi}),\quad -\Box\phi+\phi=2iA^\mu\pa_\mu\phi-A^\mu A_\mu\phi,
\end{equation}
and then show that for admissible scattering data, the solution we get in fact also solves the original mMKG system.

In the forward problem, we showed that
\begin{equation*}
    A_\mu \sim U_\mu(y)/\tau, \quad \phi\sim \tau^{-\frac 32}(e^{i\tau+iU(y)\ln \tau+ih}a_+(y)+e^{-i\tau-iU(y)\ln\tau-ih}a_-(y))
\end{equation*}
in the interior, where $U(y):=U^\tau (y)=-\frac{x_\mu}\tau U_\mu(y)$. The functions $U_\mu(y)$ are in fact determined by $a_\pm(y)$, and $R$ is uniformly bounded in $\{t-r\geq 1\}$. We also have the radiation fields $F_\mu(q,\om)=\lim_{r\rightarrow\infty} rA_\mu(r,q,\om)$. In particular in the forward problem we have $rA_L\rightarrow \q_0$ where $\q_0$ is the charge defined by the initial data.

Now for the backward problem, we consider similar type of asymptotic behaviors to be given. Since we want the Lorenz gauge condition to hold, we expect a similar relation on the radiation fields of the solution we construct. In view of the charge relation \eqref{q0atinfinity}, we define the charge at infinity using the scattering data
\begin{equation}
    \q_\infty :=\frac{1}{4\pi}\int_{H_1} |a_-(y)|^2-|a_+(y)|^2 dH_1.
\end{equation}
As in the forward problem, $a_+(y)$ and $a_-(y)$ determine the main part of the wave components in the interior. Hence we define $A^M_\mu$ to solve
\begin{equation}
    -\Box A^M_\mu=\frac{x_\mu}\tau \tau^{-3}(|a_+(y)|^2-|a_-(y)|^2)
\end{equation}
with vanishing data at $\{t=2\}$. Then we know that when $t-r>4$ we have $A^M_\mu=U_\mu(y)/\tau$, so $U_\mu(y)$ are determined by $a_+(y)$ and $a_-(y)$.

\begin{remark}
One may also add a $\chi_{LC}(q)$ cutoff factor on the right hand side of $-\Box A^M_\mu$ (we used $\chi_{LC}(q)$ in the forward problem on the phase; it is $1$ when $q\leq -1$ and $0$ when $q\geq -1/2$). It is not hard to see that this differs from the one without cutoff by a part supported near the light cone, which decays very well and affects little in the analysis. Also, we can see that when we commute the equation with vector fields, terms with vector fields falling on $\chi_{LC}(q)$ gives similarly good terms supported in $-1\leq q\leq 0$ as $|Zq|\leq |t-r|\leq 1$. For similar reasons, one can also add this cutoff factor on the phase correction in front of $(\az)^\tau$ below. The advantage of doing this is that the approximate solution behaves more regularly near the light cone. However, to make things look simpler we here proceed without the cutoff, but these modifications are straightforward.
\end{remark}

We can now define the approximate solutions
\begin{equation}
    \az_\mu=A^M_\mu+\chi(\frac\qq r) \left(\frac{F_\mu(q,\om)}r+\frac{F^{(1)}_\mu(q,\om)}{r^2}\right)+\q_\infty \chi_{ex}(q) \delta_{0\mu} r^{-1},
\end{equation}
\begin{equation}
    \phiz=\tau^{-\frac 32}(e^{i\tau+i\int (\az)^\tau d\tau} a_+(y)+e^{-i\tau-i\int (\az)^\tau d\tau} a_-(y)).
\end{equation}
Here $\chi_{ex}(q)$ supported in $\{q\geq 1\}$ is $1$ when $q\geq 2$. The fields $F_\mu(q,\om)$ should be understand as the total radiation fields subtracted by the contribution of $A^M_\mu$ and $\chi_{ex}(q)\, \q_\infty \delta_{0\mu}r^{-1}$, and we require them to decay in $\langle q\rangle$:
\begin{equation}
    \sum_{k+|\b|\leq N}|(\langle q\rangle \pa_q)^k \pa_\om^\b F_\mu(q,\om)|\lesssim \varepsilon\langle q\rangle^{-1+\gamma},\quad \gamma<1/2,\, N\geq 6.
\end{equation}
Later we will see that $F_\mu(q,\om)$ needs to satisfy a compatibility condition \eqref{admissiblecondition} which essentially comes from the Lorenz gauge condition.
The second order approximation $F^{(1)}(q,\om)$ is defined similarly as in \cite{LindbladSchlue1} and \cite{CL}:
\begin{equation}
    2\pa_q F^{(1)}_\mu(q,\om)=\triangle_\om F_\mu(q,\om).
\end{equation}
We also impose the decay assumptions on $a_\pm(y)$:
\begin{equation}
    \sum_{|I|\leq N}|(1-|y|^2)^{|I|}\nabla_y^I a_\pm(y)|\lesssim \varepsilon(1-|y|^2)^\a, \quad \a\geq 7/4,
\end{equation}
which implies that $a_\pm(y)$ decay at the same rate when applied with rotation and boost vector fields. 
\begin{remark}
    When we apply partial derivatives to $a_\pm(y)$, using this condition, we get an additional $\tau^{-1}(1-|y|^2)^{-\frac 12}$. This quantity is bounded when $t-r\geq 1$, and we see that one advantage of the cutoff in the previous remark is that we can allow more partial derivatives in the estimate.
\end{remark}

In the forward problem, we see that the Lorenz gauge condition is closely related to the fact that $L$ component of the radiation fields to be constant (the charge). From the expression of $\az$ we see that the value is $\q_\infty$ when $q\rightarrow +\infty$. Therefore, to make the approximate solutions a good candidate not only for the reduced system but also for the original system, a crucial step is to show that when we consider the radiation fields of $A^M_\mu$, denoted by $F^M_\mu(q,\om)$, we have $F^M_L(q,\om)\rightarrow \q_\infty$ as $q\rightarrow -\infty$. We shall address this question in the following subsection, and show that $rA^M_L\rightarrow \q_\infty$ as $r\rightarrow \infty$ with $q=r-t<-4$ fixed. 

In later subsections, We shall first construct solutions to the reduced system, and then show that the solutions also solve the original mMKG system. The latter step relies heavily on the decay property of $\pa^\mu\az_\mu$, which requires, again, the $L$ component of the radiation field is always $\q_\infty$. Therefore, the coefficient of the exterior part must be $\q_\infty$, because otherwise one would not be able to get a solution to the original system.


\subsection{Computing the radiation field of the interior solution}\label{sectionradiationfieldcharge}
Recall that 
\begin{equation*}
    -\Box A^M_\mu= \frac{x_\mu}\tau \tau^{-3} (|a_+(y)|^2-|a_-(y)|^2)=:x_\mu t^{-4} P(y)
\end{equation*}
and the data is vanishing at $t=2$.\footnote{One can pick any positive number $\varepsilon>0$ instead of $2$ here, and then the part where $q<-\varepsilon$ will be exactly $U_\mu(y)/\tau$. This is simply a consequence of strong Huygens' principle.} We also denote $F(y):=|a_+(y)|^2-|a_-(y)|^2=\tau^4 t^{-4} P(y)=(1-|y|^2)^2 P(y)$.

When $q<-4$, we have $A^M_\mu=U_\mu(y)/\tau$. The form implies that the radiation fields $F^M_\mu(q,\om)$ will in fact be independent of $q$ when $q<-4$. Therefore, we only need to fix one $q<-4$ in the computation.

When $t-r>4$, the representation formula gives
\begin{equation*}
    A^M_\mu=\frac{1}{4\pi}\int_2^t \frac 1{t-s}\int_{\mathbb{S}^2} \left(s,x-(t-s)\eta\right)_\mu\,  s^{-4} P\left(\frac{x-(t-s)\eta}{s}\right)(t-s)^2 d\sigma(\eta) ds
\end{equation*}
Here $(s,x-(t-s)\eta)_0=-s$, and $(s,x-(t-s)\eta)_i=(x-(t-s)\eta)_i$. Since $x=r\om$, contracting with $L^\mu$ we get
\begin{equation*}
    A^M_L=\frac{1}{4\pi}\int_2^t \int_{\mathbb{S}^2} (-s+r-(t-s)\langle\eta,\om\rangle)\,  s^{-4} P\left(\frac{x-(t-s)\eta}{s}\right)(t-s) d\sigma(\eta) ds.
\end{equation*}
Recall that this is an integral on the part of a backward light cone with the tip at $(t,x)$ restricted in the part $t\geq 2$. In the case when $t-|x|>4$, for all angles, the backward cone already hits the support boundary $\{t=|x|\}$ at some time greater than $2$. We can compute the value for each $\eta$: Consider the distance $R=R(\eta)$ so that $(t,r,0,0)-(R,R\eta_1,R\eta_2,R\eta_3)$ is on the cone $\{t=r\}$. Then
\begin{equation*}
    (t-R)^2=|x-R\eta|^2\quad \implies \quad R(\eta)=\frac{t^2-r^2}{2(t-r\langle\eta,\om\rangle)}.
\end{equation*}
We now consider the change of variable $R=t-s$. This gives 
\begin{equation*}
    A^M_L=\frac{1}{4\pi}\int_{\mathbb{S}^2} \int_0^{R(\eta)} (R-t+r-R\langle \eta,\om\rangle)\,  (t-R)^{-4} P\left(\frac{x-R\eta}{R}\right) R\, dR d\sigma(\eta).
\end{equation*}
We want to study the limit of $tA_L^M$ as $t\rightarrow\infty$ with $q=r-t$ fixed. When $q<-4$, we have
\begin{equation*}
    tA^M_L=\frac{1}{4\pi}\int_{\mathbb{S}^2} \int_0^{R(\eta)} tR(R+q-R\langle\eta,\om\rangle)\,  (t-R)^{-4} P\left(\frac{x-R\eta}{R}\right) dR d\sigma(\eta).
\end{equation*}



\begin{lem}
    Given $t$ and $x$ with $|x|<t$. Then for a function defined on the unit hyperboloid (equipped with the induced metric from Minkowski) $P(y)$ and $F(y)=(1-|y|^2)^2 P(y)$, we have the following relation
    \begin{equation}
        \int_{H_1} F(y) \, dH_1=\int_{\mathbb{S}^2} \int_0^{R(\eta)} P\left(\frac{x-R\eta}{t-R}\right) \frac{R^2(t-r\langle\eta,\om\rangle)}{(R-t)^4} dR d\sigma(\eta)
    \end{equation}
    where $R(\eta)=(t^2-r^2)/(2(t-r\langle\eta,\om\rangle))$ is defined as above.
\end{lem}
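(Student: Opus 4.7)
The plan is to recognize the right-hand side as the image of $\int_{B(0,1)} P(y)\,dy$ under a change of variables, so the whole thing reduces to (i) expressing the induced measure $dH_1$ in the $y=x'/t'$ parametrization, (ii) carrying out the substitution $y=(x-R\eta)/(t-R)$, and (iii) checking that this substitution is a bijection onto $B(0,1)$ with the claimed Jacobian.

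\textbf{Step 1 (the measure on $H_1$).} Parametrize $H_1$ by $y\in B(0,1)$ via $(t',x')=(1/\sqrt{1-|y|^2},\,y/\sqrt{1-|y|^2})$. A direct computation of the induced metric $g_{ij}=-\partial_i t'\partial_j t'+\partial_i x'\cdot \partial_j x'$ gives $g_{ij}=f^2\delta_{ij}+f^4 y_i y_j$ with $f=(1-|y|^2)^{-1/2}$, whose determinant is $f^8=(1-|y|^2)^{-4}$. Hence $dH_1=(1-|y|^2)^{-2}\,dy$, and since $F(y)=(1-|y|^2)^2 P(y)$,
\begin{equation*}
\int_{H_1} F(y)\,dH_1=\int_{B(0,1)} P(y)\,dy.
\end{equation*}

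\textbf{Step 2 (the change of variables and its Jacobian).} Define $\Phi:(R,\eta)\mapsto y=(x-R\eta)/(t-R)$ on $(0,R(\eta))\times\mathbb{S}^2$. Holding $\eta$ fixed, $\partial_R y=(x-t\eta)/(t-R)^2$; holding $R$ fixed and moving $\eta$ in directions $e_1,e_2$ orthonormal and tangent to $\mathbb{S}^2$, $\partial_{e_a} y=-R/(t-R)\,e_a$. Choosing orientation so that $\det(\eta,e_1,e_2)=+1$, the Jacobian is
\begin{equation*}
|\det(\partial_R y,\partial_{e_1}y,\partial_{e_2}y)|=\frac{R^2}{(t-R)^2}\,|\eta\cdot\partial_R y|=\frac{R^2(t-r\langle\eta,\omega\rangle)}{(t-R)^4}.
\end{equation*}

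\textbf{Step 3 (bijection).} For $y_0\in B(0,1)\setminus\{x/t\}$, the relation $y_0(t-R)=x-R\eta$ gives $\eta=y_0+(x-ty_0)/R$, and the condition $|\eta|=1$ yields a quadratic in $1/R$ whose product of roots is $(|y_0|^2-1)/|x-ty_0|^2<0$; so exactly one positive root exists, giving a unique $(R,\eta)$. One checks $R<R(\eta)$ because $(t-R)/|x-R\eta|=1/|y_0|>1$, so $(t-R,x-R\eta)$ lies strictly inside the forward light cone. Hence $\Phi$ is a measurable bijection onto $B(0,1)$ modulo the null set $\{x/t\}$. Plugging into Step 1 yields the claimed identity.

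\textbf{Main obstacle.} The only delicate point is the surjectivity/injectivity of $\Phi$ onto the full unit ball $B(0,1)$ (not just the past-directed cone intersected with $H_1$); the Jacobian computation itself is a short direct calculation. Handling the apparent issue that the hyperboloid $H_1$ is not contained in the backward null cone from $(t,x)$ requires examining the quadratic above and confirming that a unique positive $R$ (automatically $<R(\eta)$) exists for every $y_0\in B(0,1)$.
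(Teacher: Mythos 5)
Your proposal follows essentially the same strategy as the paper: identify $dH_1=(1-|y|^2)^{-2}dy$, recognize the left side as $\int_{B(0,1)}P\,dy$, and change variables to $(R,\eta)$ on the backward light cone from $(t,x)$. Your Jacobian computation in Step 2, done invariantly in the frame $\{\eta,e_1,e_2\}$, is cleaner than the paper's, which uses the chart $(R,\eta_2,\eta_3)$ with $\eta_1=\pm\sqrt{1-\eta_2^2-\eta_3^2}$ and has to split into the two signs of $\eta_1$ before recombining; your Step 1, which derives $dH_1=(1-|y|^2)^{-2}dy$ from the induced metric, is also more explicit than the paper's one-line passage through spherical coordinates. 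Your Step 3 is in fact more careful than the paper, which merely asserts that every $y_0\in B(0,1)$ is uniquely represented.

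One small gap in Step 3: the chain of equalities $(t-R)/|x-R\eta|=1/|y_0|>1$ already presupposes $t-R>0$, which you have not verified for the positive root of the quadratic; without it you only know $|t-R|>|x-R\eta|$, i.e.\ the point is inside the light cone, not which nappe. This is easily repaired: evaluating the quadratic $|x-ty_0|^2u^2+2\langle y_0,x-ty_0\rangle u+(|y_0|^2-1)$ at $u=1/t$ gives $r^2/t^2-1<0$, so the positive root $u$ lies strictly above $1/t$, hence $R=1/u<t$. With that inserted, the proof is complete.
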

\begin{proof}
    Without loss of generality, we can assume $\om=(1,0,0)$ in proving this lemma, so $\langle \eta,\om\rangle=\eta_1$. We first write the integral on the unit hyperboloid $H_1$. In the coordinate $(|y|,\theta,\varphi)$, the integral can be written as
\begin{equation*}
    \int_{\mathbb{S}^2} \int_0^1 F(y) \frac 1{(1-|y|^2)^{2}} |y|^2 d|y| d\mathbb{S}^2=\int_{\mathbb{S}^2} \int_0^1 P(y) |y|^2 d|y| d\mathbb{S}^2.
\end{equation*}
Now if we denote the standard induced Euclidean metric on unit ball (including interior) by $dy$, then the integral equals
\begin{equation*}
    \int_{|y|\leq 1} P(y)\, dy.
\end{equation*}
We wish to switch to a coordinate system more adapted to the integral we need to deal with. For every given $x=(r,0,0)$ and $t$, all points in the unit ball can be uniquely expressed in $R$ and $\eta$ as
\begin{equation*}
    y=\frac{x-R\eta}{t-R}
\end{equation*}
where $0\leq R \leq R(\eta)$. Now we want to write the integral in $R$ and $\eta$. We use the coordinate $(R,\eta_2,\eta_3)$, so we need to split the case $\eta_1\geq 0$ and $\eta_1<0$. When $\eta_1\geq 0$, we have $\eta_1=\sqrt{1-\eta_2^2-\eta_3^2}$. Then the coordinate change reads
\begin{equation*}
    y_1=\frac{r-R\sqrt{1-\eta_2^2-\eta_3^2}}{t-R},\quad y_2=-\frac{R\eta_2}{t-R},\quad y_3=-\frac{R\eta_3}{t-R}.
\end{equation*}
One can then compute the Jacobian
\begin{equation*}
    \det\frac{\pa(y_1,y_2,y_3)}{\pa(R,\eta_2,\eta_3)}=\frac{R^2(r\sqrt{1-\eta_2^2-\eta_3^2}-t)}{(R-t)^4 \sqrt{1-\eta_2^2-\eta_3^2}}.
\end{equation*}
So the part $\eta_1\geq 0$ can be written as
\begin{equation*}
    \int_0^1 \int_0^1 \int_0^{R(\eta)} P\left(\frac{x-R\eta}{t-R}\right) \left|\frac{R^2(r\sqrt{1-\eta_2^2-\eta_3^2}-t)}{(R-t)^4 \sqrt{1-\eta_2^2-\eta_3^2}}\right|\, dR d\eta_2 d\eta_3
\end{equation*}
The case when $\eta_1<0$ in fact gives the same expression. Also $\frac 1{\sqrt{1-\eta_2^2-\eta_3^2}}d\eta_2 d\eta_3=d\mathbb{S}^2=d\sigma(\eta)$, so the integral becomes
\begin{equation*}
    \int_{\mathbb{S}^2} \int_0^{R(\eta)} P\left(\frac{x-R\eta}{t-R}\right) \frac{R^2(t-r\eta_1)}{(R-t)^4} dR d\sigma(\eta)
\end{equation*}
which concludes the proof.
\end{proof}

Recall \begin{equation}
    \q_\infty=\frac{1}{4\pi}\int \left(|a_-(y)|^2-|a_+(y)|^2\right)\, dH_1=-\frac{1}{4\pi}\int_{H_1} F(y)\, dH_1.
\end{equation}
Then using the lemma, we have
\begin{equation*}
    tA^M_L-\q_\infty=\frac{1}{4\pi}
    \int_{\mathbb{S}^2} \int_0^{R(\eta)} P\left(\frac{x-R\eta}{t-R}\right) \frac{1}{(R-t)^4}(tR(R+q-R\langle\eta,\om\rangle)+R^2(t-r\langle\eta,\om\rangle)) dR d\sigma(\eta)
\end{equation*}
\begin{equation*}
    =\frac{1}{4\pi}\int_{\mathbb{S}^2} \int_0^{R(\eta)} P\left(\frac{x-R\eta}{t-R}\right) \frac{1}{(R-t)^4}\left(2tR^2(1-\langle\eta,\om\rangle)+qR(t-R\langle\eta,\om\rangle)\right)\, dR d\sigma(\eta).
\end{equation*}

Now we need to consider the integral
\begin{equation*}
    \int_{\mathbb{S}^2} \int_0^{R(\eta)} P\left(\frac{x-R\eta}{t-R}\right) \frac{1}{(R-t)^4}\left(2tR^2(1-\langle\eta,\om\rangle)+qR(t-R\langle\eta,\om\rangle)\right)\, dR d\sigma(\eta).
\end{equation*}
First we point out that the part where $R\leq 1$ will have a lot of decay in $t$ due to the $(R-t)^{-4}$ factor, hence converging to zero as $t\rightarrow\infty$. Therefore, in the analysis below we only consider the part $R\geq 1$. Using $|P(y)|\leq \varepsilon$, we seek to estimate the absolute integral
\begin{equation*}
    \mathcal I=\int_{\mathbb{S}^2} \int_0^{R(\eta)} \frac{1}{(R-t)^4}\left(2tR^2(1-\langle\eta,\om\rangle)+|q|R(t-R\langle\eta,\om\rangle)\right)\, dR d\sigma(\eta).
\end{equation*}
Since the integrand is now nonnegative, we can change the order of integration. Moreover, the integral is now independent of $\om$, so we may just take $\om=(1,0,0)$. For fixed $R$, which varies from $0$ to $(t+r)/2$ (note from above, however, we only care about values greater than $1$), in view of the expression of $R(\eta)$, the range of $\langle \eta,\om\rangle=\eta_1$ becomes
\begin{equation*}
    \frac{2tR-(t^2-r^2)}{2rR}\leq \eta_1\leq 1.
\end{equation*}
Also, for the angular variable, the integrand now is only depedent on $\eta_1$, so we can rewrite it as an integral in $\eta_1$. In this case we have $d\sigma(\eta)=2\pi |d\eta_1|$. So the integral (the $R\geq 1$ part) becomes
\begin{equation*}
    2\pi\int_1^{\frac{t+r}2} \int_{\frac{2tR-(t^2-r^2)}{2rR}}^1 \frac{1}{(R-t)^4}\left(2tR^2(1-\eta_1)+|q|R(t-R\eta_1)\right)\, d\eta_1 dR.
\end{equation*}
We have $$1-\eta_1\leq \frac{(2R-(t+r))q}{2rR},\quad t-R\eta_1\leq t(1-\eta_1)+(t-R)\eta_1,$$
so can proceed the estimate as follows
\begin{equation*}
    |\mathcal I|\lesssim \int_1^{\frac{t+r}2} \int_{\frac{2tR-(t^2-r^2)}{2rR}}^1 \frac{1}{(R-t)^4}\left(2tR^2 (1-\eta_1)+|q|R\left(t(1-\eta_1)+(t-R)\eta_1\right)\right)\, d\eta_1 dR
\end{equation*}
\begin{equation*}
    \lesssim \int_1^{\frac{t+r}2} \frac{1}{(R-t)^4} \left((2tR^2+|q|Rt) \frac{(2R-(t+r))q}{2rR}+|q|R(t-R)\right) \frac{(2R-(t+r))q}{2rR}\, dR
\end{equation*}
\begin{equation*}
    \lesssim \int_1^{\frac{t+r}2} (\frac{2t|q|^2}{r^2}+\frac{t|q|^3}{r^2 R}) {(R-t)^{-4}}(R-\frac{t+r}2)^2\, dR+\int_1^{\frac{t+r}2} \frac{|q|^2}{r}{(t-R)^{-3}}(\frac{t+r}2-R) dR
\end{equation*}
($R=\frac{t+r}2-s$; $R\geq 1$)
\begin{equation*}
    \lesssim \int_0^{\frac{t+r}2-1} \frac{t(2|q|^2+|q|^3)}{(t+q)^2}(s+\frac{|q|}2)^{-4}s^2\, ds+\int_0^{\frac{t+r}2-1} \frac{|q|^2}{r}{(s+\frac{|q|}2)^{-3}}s\, ds
\end{equation*}
\begin{equation*}
    \lesssim \frac{t(2|q|^2+|q|^3)}{(t+q)^2} \int_0^\infty {(s+\frac{|q|}2)^{-4}}s^2\, ds+\frac{|q|^2}{(t+q)}\int_0^\infty {(s+\frac{|q|}2)^{-3}}s\, ds.
\end{equation*}
Note that $q$ is a fixed constant here. Now since both integrals are convergent, $|\mathcal I|$ goes to zero as $t\rightarrow \infty$. Therefore we have proved that
\begin{equation}
    tA^M_L \rightarrow \q_\infty,\quad t\rightarrow\infty \quad \text{with $q=r-t<-4$ fixed,}
\end{equation}
which, equivalently, means that
\begin{equation}
    (1-|y|^2)^\frac 12 U_L(y)\rightarrow \q_\infty.
\end{equation}
This is actually the conclusion of Proposition \ref{propintro}.

Later we will need an estimate on how fast $tA^M_\mu$ (or $rA^M_\mu$) converges to the radiation field $F^M_\mu$. We have discussed the same problem in the forward part, and one can show that
\begin{equation}\label{convergencerate}
    |rA^M_\mu-F^M_\mu|\leq C_q \varepsilon \, r^{-1}.
\end{equation}
Here $C_q$ is a constant that may grow in $\langle q\rangle$, but we will only use this estimate when $q$ is in a compact interval. Moreover, using the argument in \cite[Section 6.2]{H97}, we have that the radiation field of $Z^I A^M_\mu$ equals $rZ^I (r^{-1} F^M_\mu(q,\om))$. Then by commuting the equation of $A^M_\mu$ with vector fields, one can also prove the convergence rate with vector fields:
\begin{equation}\label{convergencerateZ}
    |rZ^I A^M_\mu-rZ^I(r^{-1} F_\mu(q,\om))|\leq C_q \varepsilon\, r^{-1}.
\end{equation}

\subsection{Constructing the approximate solutions}
\subsubsection{Condition on the scattering data}
Given scattering data $(a_+(y),a_-(y),F_\mu(q,\om))$, we can define the approximate solution
\begin{equation}
    \az_\mu=A^M_\mu+\chi(\langle q\rangle/r) (F_\mu(q,\om) r^{-1}+F^{(1)}_\mu(q,\om)r^{-2})+\chi_{ex}(q)\q_\infty r^{-1}\delta_{0\mu}.
\end{equation}
where $F_\mu(q,\om)$ satisfies
\begin{equation}
    \sum_{k+|\b|\leq N} |(\langle q\rangle \pa_q)^k \pa_\om^\beta F_\mu(q,\om)|\lesssim \varepsilon \langle q\rangle^{-1+\gamma}.
\end{equation}
Similar to \cite{LiliHe21}, we need the scattering data to satisfy an \textit{asymptotic Lorenz gauge condition}, which says that $\pa^\mu \az_\mu$ decays well, e.g., at the rate of $O(\langle t+r\rangle^{-2})$.

Now we compute this quantity. Recall the wave equations of $A^M_\mu$. Commuting them with $\pa^\mu$, we get
\begin{equation}
    \Box \pa^\mu A^M_\mu=(\pa^\mu x_\mu)\tau^{-4} (|a_-(y)|^2-|a_+(y)|^2)+x_\mu \pa^\mu (\tau^{-4} (|a_-(y)|^2-|a_+(y)|^2))
\end{equation}
Notice that $x^\mu \pa_\mu=S=\tau\pa_\tau$. Then the right hand side becomes
\begin{equation*}
    4\tau^{-4} (|a_-(y)|^2-|a_+(y)|^2)-\tau\c 4\tau^{-5} (|a_-(y)|^2-|a_+(y)|^2)=0.
\end{equation*}
We know that $A^M_\mu$ and the time derivatives are zero at $\{t=2\}$. From the equation
\begin{equation*}
    \Box A^M_0=-\frac t\tau \tau^{-3} (|a_-(y)|^2-|a_+(y)|^2)
\end{equation*}
we can also determine $$\pa_t^2 A^M_0|_{t=2}=\frac t\tau \tau^{-3} (|a_-(y)|^2-|a_+(y)|^2)|_{t=2}=2(4-|x|^2)^{-2}(|a_-(x/2)|^2-|a_+(x/2)|^2).$$
Then we have $(\pa^\mu A^M_\mu)|_{t=2}=0$, $\pa_t(\pa^\mu A^M_\mu)|_{t=2}=-\pa_t^2 A^M_0|_{t=2}$ which is compactly supported in $|x|\leq 2$. However, by strong Huygen's principle, the effect of this initial data part will only be present when $-4\leq q\leq 0$. Therefore, we have $\pa^\mu A^M_\mu=0$ when $q<-4$.
Then using $\pa_\mu q=L_\mu$ and \eqref{convergencerateZ}, we have
\begin{equation*}
    \begin{split}
        \pa^\mu\az_\mu&=\pa^\mu A^M_\mu \mathbf{1}_{|q|\leq 4}+\pa^\mu \left(\chi(\frac \qq r)F_\mu(q,\om) r^{-1}\right)+\pa^\mu (\chi_{ex}(q)\, \q_\infty r^{-1} \delta_{0\mu})\\
        &=\left(\pa^\mu (A^M_\mu-r^{-1} F^M_\mu)+\pa^\mu (r^{-1}F^M_\mu)\right)\mathbf{1}_{|q|\leq 4}+\pa^\mu \left(\chi(\frac\qq r) F_\mu(q,\om)\right) r^{-1}+\chi_{ex}'(q)\q_\infty r^{-1}\\
        &\quad +O(\varepsilon\langle t+r\rangle^{-2})\\
        &=\left(\pa_q F^M_L\right) r^{-1}+\pa_q \left(\chi(\frac\qq r) F_L\right) r^{-1}+\chi_{ex}'(q) \q_\infty r^{-1}+O(\varepsilon\langle t+r\rangle)^{-2}).
    \end{split}
\end{equation*}
Therefore we derive the exact condition we need to impose on the scattering data:
\begin{equation}
    \pa_q \left(F^M_L+\chi(\frac{\qq}{r})F_L+\chi_{ex}(q) \q_\infty\right)=0.
\end{equation}
Note that $F_\mu$ decays to zero as $\qq\rightarrow \infty$, and $F^M_L(q,\om)=\q_\infty$ whenever $q\leq -4$. Therefore, the quantity in the parenthesis should always be $\q_\infty$. In view of the support of $F^M_L-\q_\infty$ and $\chi_{ex}(q)$, the term $\chi(\frac\qq r)F_L$ must vanish at points where $\chi(\frac\qq r)$ is not $1$. So one can drop the cutoff function factor $\chi(\frac\qq r)$ above, and it must hold that
\begin{equation}\label{admissiblecondition}
    F_L=\q_\infty-\q_\infty \chi_{ex}(q)-F^M_L.
\end{equation}

The condition ensures that $\pa^\mu \az_\mu$ decays like $(t+r)^{-2}$. Now we consider the case with vector fields. Towards null infinity we have
\begin{equation*}
    \begin{split}
        rZ(A^M_L)&=rA^M_\mu (ZL^\mu)+r(ZA^M_\mu)L^\mu\\
        &\rightarrow F^M_\mu(q,\om)(ZL^\mu)+r(Z(r^{-1}F^M_\mu(q,\om)))L^\mu=rZ(r^{-1}F_\mu(q,\om)L^\mu)\\
        &=rZ(r^{-1}\q_\infty)
    \end{split}
\end{equation*}
and from the previous subsection, the convergence rate is $O_q(\varepsilon r^{-1})$. Using $\pa^\mu A^M_\mu=0$ when $t-r>4$ again, we have
\begin{equation*}
    \begin{split}
        Z \pa^\mu\az_\mu&=Z \pa^\mu A^M_\mu \mathbf{1}_{|q|\leq 4}+Z\pa^\mu \left(\chi(\frac\qq r)F_\mu(q,\om) r^{-1}\right)+Z\pa^\mu (\chi_{ex}(q)\, \q_\infty r^{-1} \delta_{0\mu})\\
        &=\left(Z\pa^\mu (A^M_\mu-F^M_\mu r^{-1})+Z\pa^\mu (F^M_\mu r^{-1})\right)\mathbf{1}_{|q|\leq 4}+\left(Z\pa^\mu (F_\mu \chi(\frac \qq r))\right) r^{-1}+\pa^\mu\left(\chi(\frac\qq r) F_\mu\right) Z(r^{-1})\\
        &\quad +Z(\chi_{ex}'(q)\q_\infty r^{-1})+O(\varepsilon\langle t+r\rangle^{-2})\\
        &=\pa_q F^M_L Z(r^{-1})+Z(\pa_q F^M_L)r^{-1}+\left(Z \pa_q (F_L \chi(\frac\qq r))\right) r^{-1}+\pa_q(\chi(\frac\qq r) F_L) Z(r^{-1})\\
        &\quad +Z(\chi_{ex}'(q)) \q_\infty r^{-1}+\chi_{ex}'(q)\q_\infty Z(r^{-1})+O(\varepsilon\langle t+r\rangle^{-2})\\
        &=Z(r^{-1})\left(\pa_q F^M_L+\pa_q(\chi(\frac \qq r) F_L)+\q_\infty \chi_{ex}'(q)\right)\\
        &\quad +Z\left(\pa_q F^M_L+\pa_q(\chi(\frac \qq r) F_L)+\q_\infty \chi_{ex}'(q)\right)r^{-1}+O(\langle t+r\rangle^{-2})=O(\varepsilon\langle t+r\rangle^{-2}).
    \end{split}
\end{equation*}
One can apply more vector fields and derive similar estimates. In conclusion, once the scattering data satisfies the admissble condition \eqref{admissiblecondition}, we have 
\begin{equation*}
    Z^I \pa^\mu \az_\mu=O(\varepsilon\langle t+r\rangle^{-2}).
\end{equation*}



\subsection{Estimates of the approximate solutions}
In this subsection we derive estimates of the approximate solutions. First we have (For $A^M_\mu$, we can commute its wave equation with vector fields to derive the estimate)
\begin{equation}
    \begin{split}
        |(Z,S)^I \az_\mu|&\lesssim |Z^I A^M_\mu|+|Z^I(\chi(\qq/r) (F(q,\om) r^{-1}+F^{(1)}(q,\om) r^{-2}))|+|Z^I(\chi_{ex}(q) \q_\infty r^{-1} \delta_{0\mu})|\\
        &\lesssim \varepsilon\langle t+r\rangle^{-1}.
    \end{split}
\end{equation}
Using the definition of $\phiz$, we also have ($\Om$ being rotation or boosts)
\begin{equation}
    |\pa^I \Om^J \phiz|\lesssim \tau^{-\frac 32-|I|} (\ln\tau)^{|J|} (1-|y|^2)^{-\frac{|I|}2}. 
\end{equation}

In view of the definition of the second order approximate radiation field $F^{(1)}(q,\om)$, as in \cite{LindbladSchlue1,CL}, we have the following estimate
\begin{equation*}
    |\Box\Big(\chi(\frac\qq r) \big(\frac{F_\mu(q,\om)}{r}+\frac{F_\mu^{(1)}(q,\om)}{r^2}\big)\Big)|\lesssim \varepsilon\langle t+r\rangle^{-4}\langle q\rangle^{\gamma}.
\end{equation*}
Then since $\chi_{ex}(q)\q_\infty r^{-1}\delta_{0\mu}$ are exact solutions to the linear wave equation, we obtain the estimate
\begin{equation}
    |\Box Z^I(\az_\mu)-Z^I(\frac{x_\mu}\tau \tau^{-3}(|a_+(y)|^2-|a_-(y)|^2)|\lesssim \varepsilon\langle t+r \rangle^{-4}\langle q\rangle^\gamma.
\end{equation}

Now we turn to Klein-Gordon field. We have
\begin{equation*}
    \begin{split}
        -\Box\phiz+\phiz&-2i(\az)^\mu\pa_\mu\phiz=\tau^{-\frac 32}(\pa_\tau^2+1)(\tau^\frac 32 \phiz)-2i(\az)^\tau\pa_\tau\phiz+R_{tan}\\
        &=\tau^{-\frac 32}(\pa^2_\tau+1)(e^{i\tau+i\int (\az)^\tau d\tau}a_+(y)+e^{-i\tau-i\int(\az)^\tau d\tau} a_-(y))\\
        &\quad -2i(\az)^\tau \tau^{-\frac 32}(ie^{i\tau+i\int (\az)^\tau d\tau}a_+(y)-ie^{-i\tau-i\int(\az)^\tau d\tau} a_-(y))+R_{good,1}+R_{tan}\\
        &=\tau^{-\frac 32} (-2(\az)^\tau e^{i\tau+i\int (\az)^\tau d\tau} a_+(y)-2(\az)^\tau e^{-i\tau-i\int (\az)^\tau d\tau}a_-(y))\\
        &\quad -2i(\az)^\tau \tau^{-\frac 32}(ie^{i\tau+i\int (\az)^\tau d\tau}a_+(y)-ie^{-i\tau-i\int(\az)^\tau d\tau} a_-(y))\\
        &\quad +R_{\tan}+R_{good,1}+R_{good,2}+R_{good,3}\\
        &=R_{\tan}+R_{good,1}+R_{good,2}+R_{good,3}.
    \end{split}
\end{equation*}
where
\begin{equation*}
    \begin{split}
        R_{tan}&=-\tau^{-2}(\triangle_y \phiz+\frac 34\phiz)-2i(\az)^{y_i}\pa_{y_i}\phiz,\\
        R_{good,1}&=2((\az)^\tau)^2 \tau^{-\frac 32} (e^{i\tau+i\int (\az)^\tau d\tau}a_+(y))-2((\az)^\tau)^2 \tau^{-\frac 32} (e^{-i\tau-i\int (\az)^\tau d\tau}a_-(y)),\\
        R_{good,2}&=-((\az)^\tau)^2\phiz,\\
        R_{good,3}&=i\tau^{-\frac 32}\pa_\tau(\az)^\tau e^{i\tau+i\int (\az)^\tau d\tau}a_+(y)-i\tau^{-\frac 32}\pa_\tau(\az)^\tau e^{-i\tau-i\int (\az)^\tau d\tau}a_-(y).
    \end{split}
\end{equation*}
These remainder terms enjoy good decay properties. We have
\begin{equation}
    |Z^I (R_{tan}+R_{good,1}+R_{good,2}+R_{good,3})|\lesssim \varepsilon\tau^{-\frac 72}(1-|y|^2)^{\a}.
\end{equation}

\def\HH{\widetilde H_\tau}

\subsection{Backward energy estimate}
We are now ready to derive the backward energy estimate which constructs the solution. This part is very similar to \cite{CL}. We consider the following system 
\begin{equation}
    \Box (\az_{\mu}+v_{\mu})=\Im\left((\phiz+w)\overline{(\pa_\mu(\phiz+w)+i(\az_\mu+v_{\mu})(\phiz+w))}\right),
\end{equation}
\begin{equation}
    -\Box(\phiz+w)+(\phiz+w)=2im^{\mu\nu}(\az_\mu+v_{\mu})\pa_\nu (\phiz+w)-m^{\mu\nu} (\az_\mu+v_{\mu}) (\az_\nu+v_{\nu}) (\phiz+w),
\end{equation}
i.e., let $(\az_\mu+v_\mu,\phiz+w)$ solve the reduced system.
In view of the properties of the approximate solution, this gives
\begin{equation}\label{eqofvT}
    \begin{split}
        \Box v_{\mu}&=\Box(\chi (F_\mu r^{-1}+F_\mu^{(1)} r^{-2}))+R_\mu\\&+\Im(w\overline{\pa_\mu w}+w\overline{w\cdot iv_{\mu}})
        +\Im(\phiz\overline{\pa_\mu w}+w\overline{\pa_\mu\phiz})-|\phiz|^2 v_\mu+\Im(\phiz\overline{i\az_\mu w})\\
        &+\Im(w\overline{i\az\phiz})
        +\Im(\phiz\overline{iv_\mu w})-|w|^2\az_\mu+\Im(w\overline{iv_\mu \phiz}).
    \end{split}
\end{equation}
\begin{equation}\label{eqofwT}
    \begin{split}
        -\Box w+w&=
        2im^{\mu\nu} v_{\mu}\pa_\nu w-m^{\mu\nu}v_{\mu} v_{\nu}w+R\\
        &+2i ((\az)^\mu \pa_\mu w)+2i v^\mu \pa_\mu\phiz-2(\az)^\mu v_\mu (\phiz+w)-v^\mu v_\mu \phiz.
    \end{split}
\end{equation}
where $R_\mu=\Im(\phiz\overline{i\az_\mu\phiz})+\Im(\phiz\overline{\pa_\mu\phiz})-\frac{x_\mu}\tau \tau^{-3}(|a_+(y)|^2-|a_-(y)|^2)$, $R=\Box\phiz-\phiz+2i (\az)^\mu \pa_\mu\phiz$. These remainders are independent of $v_\mu$ and $w$, and we have good estimates of them. We have already obtained 
\begin{equation*}
    |Z^I R|\lesssim \varepsilon\tau^{-\frac 72}(1-|y|^2)^{\a}.
\end{equation*}
We also have $$R_\mu=-|\phiz|^2 \az_\mu+\Im\left(\phiz \overline{\pa_\mu\left(\tau^{-\frac 32} e^{i\int (\az)^\tau d\tau}a_+(y)\right)e^{i\tau}+\pa_\mu\left(\tau^{-\frac 32} e^{-i\int (\az)^\tau d\tau}a_-(y)\right)e^{-i\tau}}\right).$$ Recall that $\pa_t=\frac t\tau\pa_\tau-t^{-1}y\cdot\nab_y$, $\pa_i=-\frac{x_i}\tau\pa_\tau+t^{-1}\pa_{y_i}$. One can then express $y$-derivatives using vector fields as \eqref{laplaciany} to get
\begin{equation}
    |Z^I R_\mu|\lesssim 
    \varepsilon\tau^{-4} (1-|y|^2)^{2\a-\frac 12}.
\end{equation}

We consider $(v_{\mu,T},w_T)$ solving the equations above, with the whole right hand side multiplied by $\tilde\chi(t/T)$. The cutoff $\tilde\chi(s)$ is non-increasing with $\tilde\chi(s)=1$ when $s\leq 1/4$, and $\tilde\chi(t/T)=0$ when $s\geq 1/2$. We solve the equations backward, and let the data at $t=T$ vanish. We aim to show the limits of $v_{\mu,T}$ and $w_T$ exist as $T\rightarrow \infty$.

First we note that the vector field applied to the cutoff essentially gives the same thing. 
\begin{lem}
    $|Z^I(\tilde\chi(t/T))|\lesssim \tilde\chi(t/2T)$ in the relevant region. By relevant region we mean the place where the right hand side of \eqref{eqofvT} and \eqref{eqofwT} are nonzero.
\end{lem}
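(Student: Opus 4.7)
My plan is to compute $Z^I\tilde\chi(t/T)$ by the chain rule, extract the factors of $1/T$ produced by each differentiation, and observe that on the relevant region these are compensated by the growth of $Zt$ and $Zx_i$, leaving a bounded expression whose support is contained in the set where $\tilde\chi(t/2T)\equiv 1$.

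For a single vector field $Z\in\mathcal Z=\{\partial_\mu,\Omega_{0i},\Omega_{ij}\}$, using $\partial_t t=1$, $\partial_i t=0$, $\Omega_{0i}t=x_i$, and $\Omega_{ij}t=0$, I get $Z\tilde\chi(t/T)=T^{-1}(Zt)\,\tilde\chi'(t/T)$ with $|Zt|\lesssim 1+|x|$. An induction on $|I|$ then yields
\begin{equation*}
Z^I\tilde\chi(t/T)=\sum_{1\le k\le |I|}p_{I,k}(t,x)\,T^{-k}\,\tilde\chi^{(k)}(t/T),
\end{equation*}
where each $p_{I,k}$ is a polynomial in $(t,x)$ of degree at most $k$, obtained by repeatedly applying vector fields in $\mathcal Z$ to the coordinate functions $t$ and $x_i$. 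Since $\tilde\chi^{(k)}(s)$ is supported in $[1/4,1/2]$ for $k\ge 1$, each summand is nonzero only on $\{T/4\le t\le T/2\}$, a set on which $t/(2T)\le 1/4$ and hence $\tilde\chi(t/2T)=1$.

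It then remains to bound $|p_{I,k}(t,x)|\lesssim T^k$ on the relevant region, which would give $|Z^I\tilde\chi(t/T)|\lesssim \tilde\chi(t/2T)$. The linear source terms $R$ and $R_\mu$ on the right-hand sides of \eqref{eqofvT}--\eqref{eqofwT} involve $a_\pm(y)$ and $\phi^{(0)}$, both supported in the interior $|y|\le 1$, i.e.\ $|x|\le t$; every nonlinear term carries a factor of $\phi^{(0)}$, $v_\mu$, or $w$, and the backward solutions $v_\mu,w$ are constructed on truncated hyperboloids where $r\le t$. Combined with the support constraint $t\le T/2$ coming from $\tilde\chi(t/T)$, this gives $1+|x|\le 1+t\lesssim T$ everywhere relevant, so every monomial in $p_{I,k}$ is $\lesssim T^k$ and the bound follows.

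The only real obstacle is the identification of the relevant region: the source $\Box(\chi(\qq/r)(F_\mu r^{-1}+F^{(1)}_\mu r^{-2}))$ is not a priori spatially compactly supported, so if one were to apply a boost $\Omega_{0i}$ in a region of unbounded $|x|/T$ the claimed estimate would fail. This is resolved by the observation that this lemma is used only under the hyperboloidal energy integrals in the backward construction, which confine the pointwise computation to spacetime slabs where $r\le t$ and $t\le T/2$ hold simultaneously, so $|x|/T\lesssim 1$ uniformly and the argument above goes through.
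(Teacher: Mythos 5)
Your computation of $Z^I\tilde\chi(t/T)$ via the chain rule, the observation that derivatives of $\tilde\chi$ localize the support to $\{T/4\le t\le T/2\}$ where $\tilde\chi(t/2T)=1$, and the reduction to bounding $|x|/T$ are all fine and essentially match what the paper means by "the lemma follows by the expression of vector fields." However, your identification of the relevant region has a genuine error.

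You claim the relevant region is contained in $\{r\le t\}$, arguing first that $R,R_\mu$ and the nonlinear terms are confined to the interior, and then that $v_{\mu,T},w_T$ are "constructed on truncated hyperboloids where $r\le t$." Neither claim gives the needed support statement. The source $\Box\bigl(\chi(\qq/r)(F_\mu r^{-1}+F^{(1)}_\mu r^{-2})\bigr)$ appearing in \eqref{eqofvT} is independent of $v,w$, is \emph{not} supported in $\{r\le t\}$ — the cutoff $\chi(\qq/r)$ is nonzero well into the exterior $q=r-t>0$ — and you acknowledge this is "the only real obstacle" but then discharge it by appealing to the geometry of the foliation. That does not work: the energy estimates in the backward construction are taken over $\widetilde H_\tau\cup\Sigma^e_\tau$, and the $\Sigma^e_\tau$ part reaches out to $r\gg t$; moreover the lemma is a \emph{pointwise} statement about the support of the right-hand sides of \eqref{eqofvT}--\eqref{eqofwT}, not a statement about where it happens to be integrated.

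What the paper actually uses is weaker and correct: the terms independent of $v_{\mu,T},w_T$ are supported where $t\gtrsim r$ (the paper quantifies this as $\{t\ge r/5\}$, which covers both the interior sources and the region $\qq\lesssim r$ where $\chi(\qq/r)\ne 0$); combined with the cutoff support $\{t\le T/2\}$ this gives $r\lesssim T$ for those sources. Then — and this is the step your argument omits — since $v_{\mu,T},w_T$ vanish near $t=T$ and the sources generating them lie in $\{t+r\lesssim T\}$, \emph{finite speed of propagation} forces $v_{\mu,T},w_T$ themselves to be supported in $\{t+r\le 6T\}$. Only after this support statement is established does one conclude $|x|/T\lesssim 1$ throughout the relevant region, after which your chain-rule computation closes the argument. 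You should replace the claim $\{r\le t\}$ with the weaker $\{t\gtrsim r\}$ for the fixed sources, and add the finite-speed-of-propagation step to control the support of $v_{\mu,T},w_T$.
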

To see this, notice that all terms in the equations which are independent of $v_{\mu,T}$ and $w_T$, are supported in $\{t\geq r/5\}$. Then since $v_{\mu,T}$ and $w_T$ are zero near $t=T$, by finite speed of propagation, we see that $v_{\mu,T}$ and $w_T$ will be supported in $\{t+r\leq 6T\}$. Therefore the relevant region is contained in $\{t+r\leq 6T\}$, and the lemma follows by the expression of vector fields.


\subsubsection{Bootstrap assumptions, $L^2$ estimate of approximate solutions}
As in \cite{CL}, we work on the foliation with the part when $t-r\geq r^\frac 12$ is a truncated hyperboloid $\widetilde H_\tau$, and is extended to the exterior by a constant time slice denoted by $\Sigma_\tau^e$. One can compute that on $\Sigma_\tau^e$, $t\approx \tau^\frac 43$. One also have $dt\approx \tau^\frac 13 d\tau$ in the exterior. Then we have the energy estimate
\begin{equation*}
    \begin{split}
        E_{w}(Z^I v_{\mu,T},\tau_1)^\frac 12& \lesssim E_{w}(Z^I v_{\mu,T},\tau_2)^\frac 12+\int_{\tau_1}^{\tau_2} ||Z^I(\Box v_{\mu,T})||_{L^2(\widetilde H_\tau)}+\tau^\frac 13 ||Z^I(\Box v_{\mu,T})||_{L^2(\Sigma_\tau^e)} d\tau,\\
        E_{KG}(Z^I w_T,\tau_1)^\frac 12\lesssim &\, E_{KG}(Z^I w_T,\tau_2)^\frac 12+\int_{\tau_1}^{\tau_2} ||Z^I(-\Box+1) w_T||_{L^2(\widetilde H_\tau)}+\tau^\frac 13 ||Z^I(-\Box+1)w_T||_{L^2(\Sigma_\tau^e)} d\tau,
    \end{split}
\end{equation*}
where the energy reads
\begin{equation}
\begin{split}
    E_w(\psi,\tau)&=\int_{\widetilde H_\rho} |(\tau/t) \pa_t \psi|^2+|t^{-1}\Om_{0i} \psi|^2 dx+\int_{\Sigma_\rho^e} |\pa \psi|^2 dx,\\
    E_{KG}(\psi,\tau)=\int_{\widetilde H_\rho} &|(\tau/t) \pa_t \psi|^2+|t^{-1}\Om_{0i} \psi|^2+|\psi|^2 dx+\int_{\Sigma_\rho^e} |\pa \psi|^2+|\psi|^2 dx.
    \end{split}
\end{equation}
We also define the higher order version $E_{w,k}(\psi,\tau)$ by $E_{KG,k}(\psi,\tau)$ by applying at most $k$ vector fields in $\mathcal Z$ to $\psi$.

We have derived estimates of the approximate solutions above. Now we need estimate them at $L^2$ level. Denote $R'_\mu=R_\mu+\Box(\chi(\qq/r)(F_\mu r^{-1}+F_\mu^{(1)} r^{-2}))$. Then
\begin{equation}
\begin{split}
    ||Z^I R'_\mu||_{L^2(\HH)}&\lesssim ||\varepsilon\langle t+r\rangle^{-4}\langle q\rangle^\gamma||_{L^2(\HH)}\lesssim \varepsilon\tau^{2\gamma}\left(\int_{\tau/\sqrt 3}^\infty \langle t+r\rangle^{-8-2\gamma} r^2 dr\right)^\frac 12\lesssim \varepsilon\tau^{-\frac 52+\gamma},\\
    ||Z^I R'_\mu||_{L^2(\Sigma^e_\tau)}&\lesssim ||\varepsilon\langle t+r\rangle^{-4}\langle q\rangle^\gamma||_{L^2(\Sigma^e_\tau)}\lesssim \varepsilon\left(\int_{-t\leq t-r\leq 2t^\frac 12} \frac{r^2}{\langle t+r\rangle^8} \langle q\rangle^{2\gamma} d\om dr\right)^\frac 12\\
    &\lesssim \varepsilon\left(\int_{-2t^\frac 12}^t t^{-6} \langle q\rangle^{2\gamma} dq\right)^\frac 12\lesssim \varepsilon t^{-\frac 52+\gamma}\lesssim \varepsilon\tau^{-\frac{10}3+\gamma},\\
    ||Z^I R||_{L^2(\HH)}&\lesssim ||\varepsilon\langle t+r\rangle^{-\frac 72}||_{L^2(\HH)}\lesssim \varepsilon\left(\int_0^{2\tau^\frac 43} \langle t+r\rangle^{-7} r^2 dr\right)^\frac 12\lesssim \varepsilon\tau^{-4}\left(\int_0^{2\tau^\frac 43} \langle t+r\rangle^{-3} r^2 dr\right)^\frac 12\\
    &\lesssim \varepsilon\tau^{-2}(\ln\tau)^\frac 12,\\
    ||Z^I R||_{L^2(\Sigma^e_\tau)}&\lesssim ||\varepsilon\langle t+r\rangle^{-\frac 72}||_{L^2(\Sigma^e_\tau)}\lesssim \varepsilon(t^{-7}\c t^\frac 12)^\frac 12\lesssim \varepsilon t^{-\frac {13}4}\lesssim \varepsilon\tau^{-\frac{13}3}.
\end{split}
\end{equation}


We now make the bootstrap assumption that for all $\tau\geq T^*$,
\begin{equation}
    E_{w,k}(v_T,\tau)^\frac 12 \leq C_b\varepsilon\tau^{-\frac 32+\gamma+k\delta},\quad E_{KG,k}(w_T,\tau)^\frac 12\leq C_b \varepsilon \tau^{-1+\gamma+k\delta}.
\end{equation}
This clearly holds for all $\tau\geq T$. We will improve the bounds and hence show that one can take $T^*=1$.


\subsubsection{Decay estimates}
With bootstrap assumptions we can obtain decay estimates of $v_{\mu,T}$ and $w_T$. 
Using Sobolev embeddings, we have
\begin{equation*}
    |Z^I v_{\mu,T}|\lesssim C_b \varepsilon t^{-\frac 12+(|I|+2)\delta}\tau^{-\frac 32+\gamma+(|I|+2)\delta},\quad |Z^I w_T|\lesssim C_b \varepsilon t^{-\frac 32} \tau^{-1+\gamma+(|I|+2)\delta},\quad \text{when }t-r\leq r^\frac 12,
\end{equation*}
\begin{equation*}
    |Z^I v_{\mu,T}|\lesssim C_b\varepsilon r^{-\frac 12}\tau^{-\frac 32+\gamma+(|I|+2)\delta},\quad |Z^I w_T|\lesssim C_b \varepsilon r^{-1} \tau^{-1+\gamma+(|I|+2)\delta}, \quad\text{when }t-r\leq r^\frac 12.
\end{equation*}
(See proof of the embeddings in \cite[Lemma 6.6, 6.8]{CL}.)
To derive the $L^2$ bound of $v_{\mu,T}$ themselves, we can use the Hardy-type estimate (\cite[Lemma 6.9]{CL}) to get
\begin{equation*}
    ||r^{-1}Z^I v_{\mu,T}||_{L^2(\HH)}+||r^{-1} Z^I v_{\mu,T}||_{L^2(\Sigma^e_s)}\leq 8E_w(v_{\mu,T},\tau)^\frac 12\lesssim C_b \varepsilon \tau^{-\frac 32+\gamma+|I|\delta}.
\end{equation*}

\subsubsection{Energy estimates}
We start with the energy estimate of the wave (gauge potential) components. For shorthand notations, we write $v_\mu=v_{\mu,T}$, $w=w_T$. Recall the equation of $v_{\mu,T}$, i.e., \eqref{eqofvT} with the right hand side multiplied by $\tilde\chi(t/T)$. There are many terms on the right hand side, but starting from $-|\phiz|^2 v_\mu$, all terms are lower order terms and can be omitted. Also, using the decay of $a_\pm(y)$, the terms with $\phiz$ (including $R_\mu$) contribute little in the exterior, and thus can also be omitted. For $|I|\leq k$, the estimate reads
\begin{multline*}
    E_{w}(Z^I v_\mu,\tau_1)^\frac 12\lesssim E_{w}(Z^I v_\mu,\tau_2)^\frac 12+\int_{\tau_1}^{\tau_2} (||Z^I R'_\mu||_{L^2(\HH)}
    +||Z^I(\Im(w\overline{\pa_\mu w}))||_{L^2(\HH)}\\
    +||Z^I(\Im(\phiz\pa_\mu w+w\pa_\mu\phiz))||_{L^2(\HH)}
    +\int_{\tau_1}^{\tau_2} \tau^{\frac 13} (||Z^I R'_\mu||_{L^2(\Sigma^e_\tau)}+||Z^I(\Im(w\overline{\pa_\mu w}))||_{L^2(\Sigma^e_\tau)} d\tau\\
    \lesssim \varepsilon\tau_1^{-\frac 32+\gamma+k\delta}+\int_{\tau_1}^{\tau_2} \varepsilon \tau^{-\frac 32} E_{KG,k}(w,\tau)^\frac 12+\varepsilon \tau^{-\frac 32}\ln\tau\, E_{KG,k-1}(w,\tau)^{\frac 12}+C_b^2 \varepsilon^2 \tau_1^{-2+\frac 34\gamma+(k+2)\delta} d\tau\\
    +\int_{\tau_1}^{\tau_2} C_b \varepsilon\tau^{-2+\gamma}E_{KG,k}(w,\tau)^\frac 12 d\tau \lesssim (\varepsilon+C_b^2\varepsilon^2)\tau_1^{-\frac 32+\gamma+k\delta}.
\end{multline*}
Now we turn to the Klein-Gordon equation. Again, one can ignore terms involving $\phiz$ (including $R$) in the exterior. We have
\begin{multline*}
    E_{KG}(\tau_1,Z^I w)^\frac 12\lesssim E_{KG}(\tau_2,Z^I w)^\frac 12+\int_{\tau_1}^{\tau_2} ||Z^I R||_{L^2(\HH)}+||Z^I((\az)^\mu \pa_\mu w)||_{L^2(\HH)}\\
    +||Z^I(v\cdot \pa w)||_{L^2(\HH)}+||Z^I(v^\mu \pa_\mu\phiz)||_{L^2(\HH)}
    +\tau^\frac 13\left(||Z^I((\az)^\mu \pa_\mu w)||_{L^2(\Sigma^e_\tau)}+||Z^I(v\cdot \pa w)||_{L^2(\Sigma^e_\tau)}\right) d\tau\\
    \lesssim \int_{\tau_1}^{\tau_2} \varepsilon\tau^{-2}\ln\tau+\varepsilon\tau^{-1}E_{KG,k}(w,\tau)^\frac 12+\varepsilon\tau^{-\frac 32}E_{w,k}(v_\mu,\tau)^\frac 12\\
    +\varepsilon\tau^{-\frac 32}\ln\tau E_{w,k-1}(v_\mu,\tau)^\frac 12+C_b^2\varepsilon^2\tau^\frac 13 \tau^{-\frac 32+\gamma+k\delta} \tau^{-1+\gamma+2\delta}d\tau
    \\
    \lesssim \varepsilon\tau_1^{-1}\ln\tau_1+C_b\varepsilon^2 \tau_1^{-1+\gamma+k\delta}+C_b^2\varepsilon^2\tau_1^{-\frac 76+2\gamma+(k+2)\delta},
\end{multline*}
where we used $|Z^I((\az)^\mu \pa_\mu w)|\lesssim \tau^{-1} \frac \tau t |\pa Z^J w|$ on $\HH$, and $\tau^\frac 13|Z^I((\az)^\mu \pa_\mu)|\lesssim \tau^\frac 13 t^{-1}|\pa Z^J w|\lesssim \tau^{-1}|\pa Z^J w|$ on $\Sigma^e_\tau$. When more vector fields fall on $v$, we have used the bounds from Hardy estimate, for example
\begin{equation*}
    \tau^\frac 13 ||Z^I v\c \pa w||_{L^2(\Sigma^e_\tau)}\lesssim \tau^\frac 13 ||r^{-1} Z^{I_1}v_\mu||_{L^2(\Sigma^e_\tau)} ||rw||_{L^\infty(\Sigma^e_\tau)}\lesssim C_b^2\varepsilon^2\tau^{\frac 13} \tau^{-\frac 32+\gamma+k\delta} \tau^{-1+\gamma+2\delta}.
\end{equation*}
Therefore for given $\gamma<1/6$, we improve the boostrap bounds, so the bootstrap bounds on $v_{\mu,T}$ and $w_T$ hold for all $\tau\geq T^*$.

\subsection{Taking the limit}
We want to show that the limit as $T\rightarrow \infty$ exists.
Let $T_2>T_1$. We denote $v_{\mu,i}:=v_{\mu,T_i}$, $w_i:=w_{T_i}$. Consider the difference $\hat v_\mu:=v_{\mu,2}-v_{\mu,1}$ and $\hat w:=w_2-w_1$ ($i=1,2$). We have
\begin{equation*}
    -\Box \hat v_\mu=\tilde\chi(t/T_2)N_{\mu}(v_{\mu,2},w_2)
    -\tilde\chi(t/T_1)N_\mu(v_{\mu,1},w_1),
\end{equation*}
\begin{equation*}
    -\Box \hat w+\hat w=\tilde\chi(t/T_2)N(v_{\mu,2},w_2)
    -\tilde\chi(t/T_1)N(v_{\mu,1},w_1).
\end{equation*}
Here $N_\mu$ and $N$ are the terms on the right hand side of \eqref{eqofvT} and \eqref{eqofwT} respectively.
Then we get
\begin{equation*}
    -\Box {\hat v}_\mu=(\tilde\chi(t/T_2)-\tilde\chi(t/T_1))N_\mu(v_{\mu,1},w_1)\\
    +\tilde\chi(t/T_2)(N_\mu(v_{\mu,2},w_2)-N_\mu(v_{\mu,1},w_1)),
\end{equation*}
\begin{equation*}
    -\Box {\hat w}+\hat w=(\tilde\chi(t/T_2)-\tilde\chi(t/T_1))N(v_{\mu,1},w_1)\\
    +\tilde\chi(t/T_2)(N(v_{\mu,2},w_2)-N(v_{\mu,1},w_1)).
\end{equation*}

We then consider the energy estimate between $\Sigma_\tau$ and $\Sigma_{T_1}$ with $\tau<T_1$. Note that $v_{\mu,1}$ and $w_1$ vanish near $\Sigma_{T_1}$. We also have established the bounds
\begin{equation}
    E_{KG,k}(w_1,\tau)^\frac 12+E_{KG,k}(w_2,\tau)^\frac 12\lesssim \varepsilon \tau^{-1+\gamma+k\delta},\quad E_{w,k}(v_{\mu,1},\tau)^\frac 12+E_{w,k}(v_{\mu,2},\tau)^\frac 12\lesssim \varepsilon\tau^{-\frac 32+\gamma+k\delta}
\end{equation}
for all $\tau\leq T_2$, as well as the corresponding decay estimates.

Since $\tilde\chi(t/T_2)-\tilde\chi(t/T_1)$ is nonzero only when $\tau\geq\frac{1}{16}T_1^\frac 34$, the integration in $\tau$ in the energy estimate starts at this value. Then the integral of the first term in the energy estimate for both equations decays is $T_1$ since the integrands are essentially the same as the estimates above. 

For the second term, because of the difference form, all terms can be written as a factor times $\hat v_\mu$ or $\hat w$. Using the bounds we established above, we have the following estimates, by noticing that the structures are the same as the estimate above ($|I|\leq N-2$):
\begin{multline*}
    \int_\tau^{T_1} ||Z^I(N_\mu(v_{\mu,2},w_2)-N_\mu(v_{\mu,1},w_1))||_{L^2(\widetilde H_s)}+s^\frac 13 ||Z^I(N_\mu(v_{\mu,2},w_2)-N_\mu(v_{\mu,1},w_1))||_{L^2(\Sigma_s^e)} ds\\
    \lesssim\int_\tau^{T_1} 
    ||Z^I(\pa_\mu\phiz,\phiz)(\hat w,\pa\hat w)||_{L^2(\widetilde H_s)}+s^\frac 13||Z^I(\pa_\mu\phiz,\phiz)(\hat w,\pa\hat w)||_{L^2(\Sigma_s^e)}\\
    +||Z^I((\pa_\mu w_i) (\hat w,\pa_\mu \hat w))||_{L^2(\widetilde H_s)}
    +s^\frac 13 ||Z^I((\pa_\mu w_i) (\hat w,\pa_\mu \hat w))||_{L^2(\Sigma_s^e)}+(\varepsilon s^{-1} E_{w,k}(s,\hat v_\mu)^\frac 12) ds\\
    \lesssim \int_\tau^{T_1} \varepsilon (s^{-\frac 32}+s^{-2+\a+\delta}) E_{KG,k}(s,\hat w)^\frac 12+\varepsilon s^{-\frac 32}(\ln s)\, E_{KG,k-1}(s,\hat\om)^\frac 12+(\varepsilon s^{-1} E_{w,k}(s,\hat v_\mu)^\frac 12)\, ds,
\end{multline*}
and
\begin{multline*}
    \int_\tau^{T_1} ||Z^I(N(v_{\mu,2},w_2)-N(v_{\mu,1},w_1))||_{L^2(\widetilde H_s)}+s^\frac 13 ||Z^I(N(v_{\mu,2},w_2)-N(v_{\mu,1},w_1))||_{L^2(\Sigma_s^e)}\\
    \lesssim\int_\tau^{T_1} ||Z^I(v_{\mu,i}(\hat w,\pa_\mu \hat w))||_{L^2(\widetilde H_s)}+||Z^I((\pa_\mu w_i,w_i)\hat v_{\mu}||_{L^2(\widetilde H_s)}\\
    +s^\frac 13(||Z^I(v_{\mu,i}(\hat w,\pa_\mu \hat w))||_{L^2(\Sigma_\tau^e)}+||Z^I((\pa_\mu w_i,w_i)\hat v_{\mu}||_{L^2(\Sigma_\tau^e)})\\
    +||Z^I(\az\cdot \pa \hat w)||_{L^2(\widetilde H_s)}+s^\frac 13 ||Z^I(\az\cdot \pa \hat w)||_{L^2(\Sigma^e_s)}
    +||Z^I(\pa\phiz\c\hat v_\mu)||_{L^2(\widetilde H_s)}+s^\frac 13 ||Z^I(\pa\phiz\c\hat v_\mu)||_{L^2(\Sigma^e_s)} ds\\
    \lesssim \int_\tau^{T_1} \varepsilon s^{-1}E_{KG,k}(s,\hat w)^\frac 12+\varepsilon (s^{-\frac 12}+s^{-\frac 23+\gamma+2\delta}) E_{w,k}(s,\hat v_\mu)^\frac 12+\varepsilon s^{-\frac 12}(\ln s) E_{w,k-1}(s,\hat v_\mu)^\frac 12\, ds.
\end{multline*}
These estimates are similar to the energy estimate we did above.
Now define $E_k(\tau)^\frac 12=E_{w,k}(\hat v,\tau)^\frac 12+\tau^{\frac 12} E_{KG;k}(\hat v,\tau)^\frac 12$. We have for $\tau\leq T_1$ that
\begin{equation*}
    E_k(\tau)^\frac 12 \lesssim \varepsilon (T_1)^{-\frac 12+\gamma+k\delta}+\varepsilon((T_1)^\frac 34)^{-\frac 12+\gamma+k\delta} +\int_\tau^{T_1} \varepsilon s^{-1} E_k(s)^\frac 12+\varepsilon s^{-1}(\ln s)^k E_{k-1}(s)^\frac 12 ds.
\end{equation*}
When $k=0$, the $E_{k-1}$ term does not appear, so using Gr\"{o}nwall's inequality we have
\begin{equation*}
    E_0(\tau)^\frac 12 \lesssim \varepsilon (T_1/\tau)^{C\varepsilon} (T_1)^{-\frac 38+\frac 34\gamma}\rightarrow 0\quad\text{as }T_1\rightarrow \infty.
\end{equation*}
Then it is straightforward to show by induction that
\begin{equation*}
    E_k(\tau)^\frac 12\lesssim \varepsilon (T_1/\tau)^{C\varepsilon} (T_1)^{-\frac 38+\frac 34\gamma+\frac 34 k\delta},\quad k\leq N-2.
\end{equation*}
Then by Sobolev embeddings, we have for $|I|\leq N-4$ that
\begin{equation*}
    \sup_{\tau(t,x)\leq T_1}|Z^I \hat w|+|Z^I \hat v|\lesssim \varepsilon (T_1)^{-\frac 38+\frac 34\gamma+\frac 34(k+2)\delta+C\varepsilon}
\end{equation*}
which converges uniformly to zero as $T_2>T_1\rightarrow \infty$. This shows the existence of the limit $w=\lim_{T\rightarrow\infty} w_T$ and $v_\mu=\lim_{T\rightarrow\infty} v_{\mu,T}$, and that $(\az_\mu+v_{\mu},\phiz+w)$ gives a solution of the reduced mMKG system, with the asymptotic behavior being exactly the one given by the scattering data.

\subsection{The original system}\label{sectionoriginalsystem}
In this subsection, we show that the solution we obtained above also satisfies the Lorenz gauge condition, hence solves the original mMKG system. Since $\az_\mu+v_{\mu,T}$ and $\phiz+w_T$ solve the reduced system for each $T$, using \eqref{eqoflambda}, we have that $\lambda_T=\pa^\mu(\az_\mu+v_{\mu,T})$ satisfies the equation
\begin{equation}
    \Box\lambda_T=|\phiz+w_T|^2\lambda_T.
\end{equation}
Recall we have $Z^I\lambda_T=Z^I\pa^\mu (\az_\mu+v_{\mu,T})=O(\varepsilon\langle t+r\rangle^{-2})+Z^I\pa^\mu v_{\mu,T}$.
One can then do the energy estimate
\begin{equation*}
    E_{w}(Z^I\lambda_T,\tau)^\frac 12\lesssim E_w(Z^I\lambda_T,T)^\frac 12+\int_\tau^T ||Z^I(|\phiz+w_T|^2\lambda_T)||_{L^2(\HH)}
    +s^\frac 13||Z^I(|\phiz+w_T|^2\lambda_T)||_{L^2(\Sigma^e_s)}ds.
\end{equation*}
We have shown that
\begin{equation*}
    |\pa Z^I \pa^\mu A_\mu^{(0)}|\lesssim \varepsilon\langle t+r\rangle^{-2},
\end{equation*}
and we know that $v_{\mu,T}$ vanish near $\tau=T$. Therefore we have
\begin{equation*}
    \begin{split}
        E_{w} (Z^I\lambda_T,T)^\frac 12
        &\lesssim (\int_{T\leq t,\, 0\leq r\leq 2T^\frac 43} \varepsilon^2(t+r)^{-4} dx)^\frac 12+(\int_{t=T^\frac 43,r\geq t/2} \varepsilon^2(t+r)^{-4} dx)^\frac 12+E_w(Z^I\pa v_{\mu,T},T)^\frac 12\\
        &\lesssim \varepsilon T^{-\frac 12},  
    \end{split}
\end{equation*}
so for all $|I|\leq N-2$, the integral can be estimated as
\begin{equation*}
    \begin{split}
        \int_\tau^T ||\, Z^I&(|\phiz+w_T|^2\lambda_T)||_{L^2(\widetilde H_s)}+s^\frac 13||\, Z^I(|\phiz+w_T|^2\lambda_T)||_{L^2(\Sigma^e_s)}d s\\
        &\lesssim \sum_{|I_1|\leq |I|}\int_\tau^T ||\varepsilon t^{-3}Z^{I_1}\lambda_T||_{L^2(\widetilde H_s)}+\varepsilon\tau^\frac 13 \tau^{-2+2\gamma}r^{-1}||r^{-1} Z^{I_1}\lambda_T||_{L^2(\Sigma^e_s)} ds\\
        &\lesssim \int_\tau^T \varepsilon t^{-2}||r^{-1}Z^{I_1}\lambda_T||_{L^2(\widetilde H_s)}+\varepsilon \tau^{-2}||r^{-1} Z^{I_1} \lambda_T||_{L^2(\Sigma^e_s)} ds\\
        &\lesssim \sum_{|I_1|\leq |I|}\int_\tau^T \varepsilon\tau^{-2} E_w(Z^{I_1} \lambda_T,\tau)^\frac 12 ds.
    \end{split}
\end{equation*}
Then using Gr\"{o}nwall's inequality for the estimate, we get
\begin{equation*}
    E_w(Z^I\lambda_T,\tau)^\frac 12\lesssim \varepsilon T^{-\frac 12}.
\end{equation*}
Therefore, $E_w(Z^I\lambda_T,\tau)^\frac 12$ is bounded by $T^{-1}$ for all $\tau\leq T$. We also have that $\lambda_T$ vanishes as $r\rightarrow 0$ at each time slice. Then, letting $T\rightarrow \infty$, we see that the Lorenz gauge condition is satisfied everywhere for the solution we get.

\subsection{The case when the radiation fields decay slower}
We have made the assumption that $F_\mu(q,\om)$ is decaying at the rate of $\langle q\rangle^{-1+\gamma}$ where $\gamma<1/6$. In this section we discuss the case when they decay even less. They still have to, of course, satisfy the condition \eqref{admissiblecondition}.

One can see from above that, in some sense, this part of radiation fields, compared with the parts governed by the Klein-Gordon source and the charge, corresponds to the part of the wave components satisfying the linear wave equation. In fact, we can first find linear solutions $A_\mu^{Linear}$ that scatter to the radiation fields $F_\mu(q,\om)$. This is a linear scattering problem, and \cite{LindbladSchlue1} it was shown that for all $\gamma<1/2$ one can find linear solutions $A_\mu^{Linear}$ satisfying
\begin{equation*}
    |(Z,S)^I \left(A_\mu^{Linear}-\chi(\qq/r)(F_\mu(q,\om)r^{-1}+F^{(1)}_\mu(q,\om))\right)|\lesssim \varepsilon\langle t+r\rangle^{-1} t^{-\gamma}.
\end{equation*}
Then we can modify our definition of the approximate solution $\az$:
\begin{equation*}
    \az_\mu=A^M_\mu+A^{Linear}_\mu+\q_\infty\chi_{ex}'(q)r^{-1}\delta_{0\mu}.
\end{equation*}
Since $-\Box A_\mu^{Linear}=0$, the estimate of perturbations are in fact easier. However, we need to be more careful about the Lorenz gauge condition. The above condition implies $$|Z^I\pa^\mu \left(A_\mu^{Linear}-\chi(\qq/r)(F_\mu(q,\om)r^{-1}+F^{(1)}_\mu(q,\om))\right)|\lesssim \varepsilon \langle t+r\rangle^{-1}t^{-\gamma}(1+|q|)^{-1}\lesssim \varepsilon\langle t+r\rangle^{-1-\gamma},$$ 
which is worse than $O(\varepsilon\langle t+r\rangle^{-2})$, so the new $\pa^\mu \az_\mu$ is only a $O(\varepsilon\langle t+r\rangle^{-1-\gamma})$ term. But if we review the proof in Section \ref{sectionoriginalsystem} above, we see that this decay is still enough for $\gamma<1/2$, and the bound of $E_w(Z^I\lambda_T,T)^\frac 12$ there becomes $\varepsilon T^{-\gamma+\frac 12}$, still allowing us to prove the result.

\bibliographystyle{abbrv}
\bibliography{reference}

\end{document}